\documentclass[journal,draftcls,onecolumn,12pt,twoside]{IEEEtran}

\normalsize


\usepackage{cite}
\usepackage{amsmath,amssymb,amsfonts}
\usepackage{algorithmic}
\usepackage{graphicx}
\usepackage{textcomp}
\usepackage{xcolor}

\PassOptionsToPackage{T1}{fontenc}
\PassOptionsToPackage{utf8}{inputenc}
\PassOptionsToPackage{english}{babel}
\PassOptionsToPackage{svgnames}{xcolor}
\PassOptionsToPackage{ruled,boxed,linesnumbered,vlined,noend}{algorithm2e}
\PassOptionsToPackage{nameinlink,capitalize,noabbrev}{cleveref}
\PassOptionsToPackage{shortlabels}{enumitem}
\usepackage{%
  fontenc,inputenc,babel,%
  xstring,enumitem,algorithm2e,%
  amsmath,amssymb,amsthm,thm-restate,mathabx,mathtools,bm,%
  multirow,etoolbox,fancyhdr,lastpage,newfloat,tabularx,%
  todonotes,booktabs,%
  rotating,%
  tcolorbox,caption,%
  hyperref,cleveref,%
}

\SetKwInput{KwInput}{Input}
\SetKwInput{KwOutput}{Output}


\hypersetup{
  colorlinks,
  linkcolor=black,
  citecolor=black,
  urlcolor=black,
  bookmarksnumbered,
}
\newcommand{\doi}[1]{doi: \href{http://dx.doi.org/#1}{\path{#1}}}

\DontPrintSemicolon
\SetKw{break}{break}


\newtheorem{theorem}{Theorem}[section]
\newtheorem{proposition}[theorem]{Proposition}
\newtheorem{lemma}[theorem]{Lemma}
\newtheorem{corollary}[theorem]{Corollary}
\newtheorem{definition}[theorem]{Definition}
\newtheorem{remark}[theorem]{Remark}

\newtheorem{problem}[theorem]{Problem}
\numberwithin{equation}{section}

\pagestyle{fancy}
\fancyhf{}
\fancyhead[C]{\it\color{red}Preprint dated \today}
\fancyfoot[C]{Page \thepage{} of \pageref{LastPage}}


\newcommand{\ZZ}{\ensuremath{\mathbb{Z}}}

\newcommand{\QQ}{\ensuremath{\mathbb{Q}}}

\newcommand{\F}{\mathbb{F}}

\newcommand{\ffield}{F}
\newcommand{\vals}{\mathcal{F}}
\newcommand{\pts}{\mathcal{P}}
\newcommand{\X}{\mathcal{X}}
\newcommand{\Y}{\mathcal{Y}}
\newcommand{\tree}{\mathcal{T}}

\newcommand{\Hpol}{H}
\newcommand{\funcf}{F}
\newcommand{\Oh}{\mathcal{O}}
\newcommand{\sOh}{\tilde{\Oh}}

\newcommand{\Mult}[1]{\mathsf{M}(#1)}
\newcommand{\Multlog}[1]{\Mult{#1}\log({#1})}
\newcommand{\setcomp}[2]{\{ {#1} \enspace | \enspace {#2} \}}
\newcommand{\set}[1]{\{ {#1} \}}

\newcommand{\funcimpl}[2]{{#1} \longmapsto \enspace {#2}}

\newcommand{\ideal}[1]{\langle {#1} \rangle}
\newcommand{\pder}[2]{\frac{\partial {#1}}{\partial {#2}}}
\renewcommand{\vec}[1]{\boldsymbol{#1}}

\newcommand{\riroch}{\mathcal{L}}
\newcommand{\agcode}{\mathcal{C}}

\newcommand{\for}{\text{ for }}

\newcommand{\assign}{\leftarrow}

\newcommand{\Cset}{{\mathcal S}}

\newcommand{\quot}[1]{``#1''}

\newcommand{\dX }{d_X}
\newcommand{\dY }{d_Y}
\newcommand{\nX }{n_X}
\newcommand{\nY}{\nu_Y}

\newcommand{\UniMPE}{\ensuremath{\mathsf{UnivariateMPE}}\xspace}
\newcommand{\BiMPE}{\ensuremath{\mathsf{BivariateMPE}}\xspace}
\newcommand{\Enc}{\ensuremath{\mathsf{Encode}}\xspace}
\newcommand{\UnEnc}{\ensuremath{\mathsf{Unencode}}\xspace}
\newcommand{\UniInterp}{\ensuremath{\mathsf{UnivariateInterp}}\xspace}
\newcommand{\BiInterp}{\ensuremath{\mathsf{BivariateInterp}}\xspace}
\newcommand{\LinMod}{\ensuremath{\mathsf{Combine}}\xspace}

\newcommand{\Reduce}{\ensuremath{\mathsf{Reduce}}\xspace}
\newcommand{\TreeVanish}{\ensuremath{\mathsf{TreeVanish}}\xspace}
\newcommand{\VanTree}{\mathcal{U}}

\newcommand{\Grobner}{Gr\"obner\xspace}
\DeclareMathOperator{\LM}{\mathsf{LM}}
\DeclareMathOperator{\orderab}{\preceq_{\mathit{a,b}}}

\newcommand{\VanIdeal}{\mathcal{G}}

\DeclareMathOperator{\supp}{supp}
\DeclareMathOperator{\st}{\enspace s.t. \enspace}
\DeclareMathOperator{\ev}{ev}

\DeclareMathOperator{\rem}{rem}

\DeclarePairedDelimiter\abs{\lvert}{\rvert}

\newcommand{\Cab}{$C_{ab}$\xspace}
\newcommand{\HW}{\mathrm{HW}}

\begin{document}

\title{Fast Encoding of AG Codes over \Cab Curves
}

\author{Peter~Beelen,
        Johan~Rosenkilde,
        and~Grigory~Solomatov.%
      }



\maketitle

\begin{abstract}
  We investigate algorithms for encoding of one-point algebraic geometry (AG) codes over certain plane curves called \Cab curves, as well as algorithms for inverting the encoding map, which we call ``unencoding''.
  Some \Cab curves have many points or are even maximal, e.g.~the Hermitian curve.
  Our encoding resp.~unencoding algorithms have complexity $\sOh(n^{3/2})$ resp.~$\sOh(qn)$ for AG codes over any \Cab curve satisfying very mild assumptions, where $n$ is the code length and $q$ the base field size, and $\sOh$ ignores constants and logarithmic factors in the estimate.
  For codes over curves whose evaluation points lie on a grid-like structure, for example the Hermitian curve and norm-trace curves, we show that our algorithms have quasi-linear time complexity $\sOh(n)$ for both operations.
  For infinite families of curves whose number of points is a constant factor away from the Hasse--Weil bound, our encoding and unencoding algorithms have complexities $\sOh(n^{5/4})$ and $\sOh(n^{3/2})$ respectively.
\end{abstract}

\begin{IEEEkeywords}
  Encoding, AG code, Hermitian code, \Cab code, norm-trace curve
\end{IEEEkeywords}

\section{Introduction}
\subsection{Algebraic geometry codes}
In the following $\F$ is any finite field, while $\F_q$ denotes the finite field with $q$ elements.
An $\F$-linear $[n,k]$ code is a $k$-dimensional subspace $\mathcal C \subseteq \F^n$.
A substantial part of the literature on codes deals with constructing codes with special properties, in particular high minimum distance in the Hamming metric.
In this context, algebraic geometry (AG) codes, introduced by Goppa \cite{goppa_algebraico-geometric_1983}, have been very fruitful: indeed, we know constructive families of codes from towers of function fields whose minimum distance beat the Gilbert--Varshamov bound \cite{tsfasman_modular_1982}.
Roughly speaking, such codes arise by evaluating functions in points lying on a fixed algebraic curve defined over $\F$.
The evaluation points should be rational, i.e., defined over $\F$.

The well-known Reed--Solomon (RS) codes constitute a particularly simple subfamily of AG codes. Arguably, after Reed--Solomon (RS) codes, the most famous class of AG codes is constructed using the Hermitian curve; the Hermitian curve is a maximal curve, i.e.~the number of rational points meets the Hasse--Weil bound \cite[Theorem 5.2.3]{stichtenoth_algebraic_2009}.
It is an example of the much larger family of \Cab curves, which are plane curves given by a bivariate polynomial equation $H(X,Y) \in \F[X,Y]$ with several additional regularity properties.
These imply that the function field associated to a \Cab curve has a single place at infinity, $P_\infty$, and any function with poles only at $P_{\infty}$ can be represented by a bivariate polynomial $f \in \F[x,y]$ whose degree is bounded by a function of the pole order at $P_\infty$.
Here, $x$ and $y$ are two functions which satisfy $H(x,y) = 0$ and hence $\F[x,y]$ is a quotient of $\F[X,Y]$.

This means that the computations needed for operating with one-point AG codes over \Cab curves are much simpler than in the general AG code case.
The most well-studied operation pertaining to codes is decoding, i.e.~obtaining a codeword from a noisy received word.
For general AG codes, the fastest decoding algorithms essentially revert to linear algebra and have complexity roughly $\Oh(n^3)$, where $n$ is the length of the code, e.g.~\cite{sakata_generalized_1995,lee_unique_2014}.
However, for one-point \Cab codes, we have much faster algorithms, e.g.~\cite{beelen_efficient_2010}.
In \cite{nielsen_sub-quadratic_2015}, we studied Hermitian codes, i.e.~AG codes over the Hermitian curve and obtained a decoding algorithm with complexity roughly $\sOh(n^{5/3})$\footnote{
  Formally, for a function $f(n)$, then  $\sOh\big(f(n)\big) = \bigcup_{c=0}^\infty \Oh\big(f(n) \log^c(f(n)) \big)$.
}.

\subsection{Encoding and unencoding}
A simpler, though somewhat less studied problem for AG codes, however, is the \emph{encoding}, i.e.~the computational task of obtaining a codeword $\vec c \in \mathcal C$ belonging to a given message $\vec m \in \F^k$.
Given a message $\vec m$ and a generator matrix $G \in \F^{k \times n}$ of the code, a natural encoder is obtained as the vector-matrix product $\vec c = \vec mG$.
In general, this costs roughly $2kn$ operations in the field $\F$.
If $G$ is ``systematic'', e.g.~in row-reduced echelon form, then it is slightly cheaper, costing only $2k(n-k)$ operations.
Keeping the rate $k/n$ fixed and letting $n \rightarrow \infty$, the asymptotic cost is in both cases $\Oh(n^2)$ operations in $\F$.
For an arbitrary linear code, there is not much hope that we should be able to do better.

The inverse process of encoding, which we will call \emph{unencoding}, matches a given codeword $\vec c$ with the sent message $\vec m$.
If the encoder was systematic, this is of course trivial, but for an arbitrary linear encoder computing this inverse requires finding an information set for the code and inverting the generator matrix at those columns.
This matrix inverse can be precomputed, in which case the unencoding itself is simply a $k \times k$ vector-matrix multiplication costing $\Oh(k^2)$, which for a fixed rate equals $\Oh(n^2)$.

In this article, we contribute to the study of fast encoding and unencoding by investigating the case of one-point AG codes over any \Cab curve.
For such codes, encoding can be considered as follows: the entries of the message $\vec m \in \F^k$ are written as the coefficients of a bivariate polynomial $f_{\vec m} \in \F[X,Y]$ with bounded degree, and the codeword is then obtained by evaluating $f_{\vec m}$ at rational points of the \Cab curve (in some specific order).
This is called a `` multipoint evaluation'' of $f_{\vec m}$.
Similarly, for unencoding we are given a codeword $\vec c \in \F^n$ and we seek the unique polynomial $f \in \F[X,Y]$ whose monomial support satisfies certain constraints and such that the entries of $\vec c$ are the evaluations of $f$ at the chosen rational points of the \Cab curve.
This is called ``polynomial interpolation'' of the entries of $\vec c$.
Using this approach, we give fast algorithms for encoding and unencoding AG codes over any \Cab curve; in particular we obtain quasi-linear complexity $\sOh(n)$ in the code length for encoding and unencoding one-point Hermitian codes.

Our outset is to find algorithms for multipoint evaluation and interpolation of bivariate polynomials on any point set $\pts$, where we at first do not use the fact that $\pts$ are rational points on a \Cab curve; we do this in \cref{ssec:mult-eval-bivar,ssec:fast-interp} respectively.
Under mild assumptions, our algorithms for these problems have quasi-linear complexity in the input size when $\pts$ is a ``semi-grid'', i.e.~if we let $\Y_\alpha = \{ \beta \in \F \mid (\alpha, \beta) \in \pts \}$ for $\alpha \in \F$, then each $|\Y_\alpha|$ is either 0 or equals some constant $\nY$ independent of $\alpha$, see \cref{fig:semigrid} and \cref{def:semi-grid}.
This result may be of independent interest.
We then apply these algorithms to the coding setting in \cref{ssec:fast-encoding,ssec:unencoding} respectively.
In \cref{sec:applications} we more specifically study the performance for \Cab curves with special structure or sufficiently many points.

\subsubsection*{Contributions}
\begin{itemize}
  \item We give quasi-linear time algorithms for bivariate multipoint evaluation and interpolation when the point set is a semi-grid, under some simple conditions of the monomial support.
  See \cref{rem:mpe_semi-grid,rem:interp_semi-grid}.
  \item We give algorithms for encoding and for unencoding a one-point AG code over an arbitrary \Cab curve.
  Under very mild assumptions on the \Cab curve, these algorithms have complexity $\sOh(an) \subset \sOh(n^{3/2})$, respectively $\sOh(qn)$, where $a$ is the smallest nonzero element of the Weierstrass semigroup at $P_{\infty}$.
  Note $q < n$ for \Cab curves of interest to coding theory.
  Our interpolation algorithm requires a polynomial amount of precomputation time.
  The encoding is not systematic.
  See \cref{thm:Enc,thm:unencoding}.
  \item We show that for codes whose evaluation points are semi-grids in a particular ``maximal'' way compared to the \Cab curve, both algorithms have quasi-linear complexity $\sOh(n)$.
  This includes codes over the Hermitian curve and norm-trace curves.
  See \cref{def:cab_semigrid,prop:cab_semigrid_enc,prop:cab_semigrid_unenc,cor:hermit-cost,cor:norm_trace-cost,cor:hermitlike-cost}.
  \item We show that the algorithms have improved complexity if the \Cab curve has sufficiently many rational points.
  For example, if the number of rational points is a constant fraction from the Hasse--Weil bound, then the encoding algorithm has complexity $\sOh(n^{5/4})$, and the unencoding algorithm has complexity $\sOh(n^{3/2})$, see \cref{thm:hasse-weil-enc}.
\end{itemize}


\subsection{Related work on encoding}
For the particularly simple case of RS codes, it is classical that the encoding can be done in quasi-linear complexity \cite{justesen_complexity_1976} using univariate multipoint evaluation (also see \cref{ssec:comp-tools}).

Certain AG codes are investigated in \cite{heegard_systematic_1995}, where they give a space-efficient encoding algorithm (i.e.~it does not need to store a $k \times n$ generator matrix) using \Grobner bases and high-order automorphisms of the code; the time complexity, however, still remains quadratic.

In \cite{yaghoobian_1992} one-point Hermitian codes over $\F_{q^2}$ are encoded by viewing them as concatenated RS codes.
More precisely, if $f \in L(m P_\infty)$, then $f=\sum_{i=0}^\kappa f_i(x)y^i$ for suitable polynomials $f_i(x)$, where $\kappa = \min(q-1, \lfloor m/(q+1) \rfloor)$.
Evaluation of the $f_i(x)$ corresponds to fast encoding of RS codes.
Since for each $x$-coordinate in $\F_{q^2}$ there are exactly $q$ points on the Hermitian curve with this $x$-coordinate, the encoding of the $f_i(x)$ gives rise to a $q$-fold concatenation of an RS codeword.
Then the evaluation of $f_i(x)y^i$ is computed by multiplying each coordinate of the concatenated RS codeword with a suitable value.
They perform complexity analysis, but it seems that using fast RS encoding the algorithm costs $\sOh(\kappa q q^2) \subset \sOh(q^4)=\sOh(n^{4/3})$ operations in $\F_{q^2}$.
Though the underlying principle of our algorithm has similarities with this approach, our algorithm is a factor $n^{1/3}$ faster for the Hermitian curve.

In \cite{matsumoto_2001} the results of \cite{yaghoobian_1992} are generalized to arbitrary one-point AG codes.
Unfortunately, no asymptotic complexity analysis is given, making it difficult to compare their results with ours in terms of the parameters $n, k$ etc.
Their abstract does state that there are examples where their method is three times as fast as the trivial quadratic encoding, and so one may suspect that they have no asymptotic gain in the general case.
It can also be shown that their algorithm is never quasi-linear, which ours is in certain cases.


In \cite{narayanan_nearly_2017} an encoding algorithm is given which is faster than $\Oh(n^{3/2})$ for certain carefully tailored, asymptotically good sub-codes of AG codes arising from the Garcia-Stichtenoth tower \cite{garcia_tower_1995}.
Our methods do not handle these codes, so the results can not be directly compared.

\subsection{Related work on bivariate multipoint evaluation}
\label{sec:relat-work-bivar}

As outlined above, multipoint evaluation (MPE) and interpolation of bivariate polynomials over given point sets is a computational problem tightly related to encoding and unencoding of AG codes over \Cab curves.
In fact, any MPE algorithm for bivariate polynomials can immediately be applied for encoding.
The situation is somewhat more complicated for interpolation, which we get back to.
In this section and the next we review the literature on these problems\footnote{%
  Some of the cited algorithms apply to more variables than just 2, but we specialize the discussion to the bivariate case to ease comparison with our results.}.

We begin by discussing the MPE problem.
The input is a point set $\pts \subset \F^2$ and $f \in \F[X,Y]$ with $\deg_X f = d_X$ and $\deg_Y f = d_Y$, and we seek $\big(f(P)\big)_{P \in \pts}$.
Let $n := |\pts|$.
As we will see in \cref{ssec:cab_codes}, the main interest for the application of \Cab codes is when $d_X d_Y < n$ and $d_Y \ll d_X$.
The former is a common assumption in the literature, but numerous papers assume $d_X \approx d_Y$ and such algorithms will often have poor performance in our case.

Spurred by the quasi-linear algorithm available in the univariate case (see \cref{ssec:comp-tools}), the best we could hope for would be an algorithm of complexity $\sOh(d_X d_Y + n)$, i.e.~quasi-linear in the size of the input, but such a result is still not known in general.
We will exemplify the complexities here for use in encoding Hermitian codes, i.e.~\Cab codes over the Hermitian curve, see \cref{ssec:hermitian}: in this case $n = q^3$ where we work over the field $\F_{q^2}$.
We will consider the case where the dimension of the code is in the order of the length, for which we then have $\deg_X f \in \Oh(n^{2/3})$ and $\deg_Y f < q = n^{1/3}$. The naive approach is to compute the evaluations of $f$ one-by-one.
Using Horner's rule, each such evaluation can be computed in $\Oh(d_Xd_Y)$ time, for a total complexity of $\Oh(d_Xd_Yn)$.
For the Hermitian codes, the complexity would be $\Oh(n^2)$.

One of the first successes was obtained by Pan \cite{pan_simple_1994} with a quasi-linear algorithm for the case $\pts = S_X \times S_Y$ for $S_X, S_Y \subseteq \F$, i.e.~evaluation on a grid, see \cref{fig:grid}.
The algorithm works by applying univariate MPE in a ``tensored'' form.
The algorithm can be directly applied for any $\pts$ by calling it on the smallest grid $\hat \pts$ which contains all of $\pts$ and then throwing away the unneeded evaluations.
If $|\hat \pts| \gg n$ then the complexity of this approach will not be quasi-linear in the original input size: in the worst case $|\hat \pts| \approx n^2$ so the complexity becomes $\sOh(d_Xd_Y + n^2)$, which is quadratic in the input size when $d_Xd_Y < n$.
For Hermitian codes, then $\hat \pts = \F_{q^2}^2$, hence Pan's algorithm would give complexity $\sOh(n^{4/3})$.
Our MPE algorithm presented in \cref{ssec:mult-eval-bivar} essentially generalizes Pan's algorithm to achieve quasi-linear cost on point sets with only \emph{semi-grid} structure, see \cref{fig:semigrid}; therefore the performance of our algorithm is never worse than Pan's.
Moreover, though few \Cab curves form a grid, we observe in \cref{ssec:fast-unencoding} that certain nice families, including the Hermitian curve, form semi-grids, implying that our evaluation algorithm has quasi-linear complexity for codes over these curves.

\begin{figure}
\centering
\begin{minipage}{.5\textwidth}
  \centering
  \includegraphics[width=.7\linewidth]{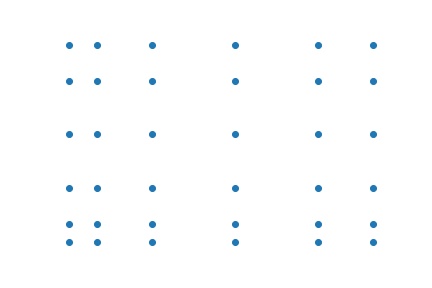}
  \captionof{figure}{A grid.}
  \label{fig:grid}
\end{minipage}%
\begin{minipage}{.5\textwidth}
  \centering
  \includegraphics[width=.7\linewidth]{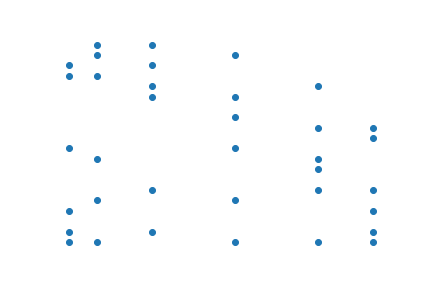}
  \captionof{figure}{A semi-grid.}
  \label{fig:semigrid}
\end{minipage}
\end{figure}

N\"usken and Ziegler \cite{nusken-ziegler-2004} (NZ) reduced bivariate multipoint evaluation to a variant of \emph{bivariate modular composition}:
Write $\pts = \{ (\alpha_1,\beta_1),\ldots,(\alpha_n,\beta_n) \}$ and assume all the $\alpha_i$ are distinct.
Compute $h(X) = \prod_{i=1}^n (X - \alpha_i)$ and $g \in \F[X]$ such that $g(\alpha_i) = \beta_i$ for $i = 1,\hdots,n$; both of these can be computed in $\sOh(n)$ time.
If we then compute $\rho(x) = f(x,g(x)) \rem h(x) \in \F[X]$, we see that $\rho(\alpha_i) = f(\alpha_i, \beta_i)$ for each $i$, and hence we can compute the evaluations of $f$ at the points $\pts$ by a univariate MPE of $\rho$ at the $\alpha_1,\ldots,\alpha_n$.
The latter can be done in $\sOh(n)$ time, so all that remains is the computation of $\rho(X)$.
N\"usken and Ziegler show how to do this in complexity roughly $\Oh(d_X d_Y^{1.635 + \epsilon} + nd_Y^{0.635+\epsilon})$, where $\epsilon > 0$ can be chosen arbitrarily small, using fast rectangular matrix multiplication \cite{le_gall_rect_mat_2012}.

In general, and in the case of our interest, the $X$-coordinates of $\pts$ will not be distinct.
In this case, the points can be \quot{rotated} by going to an extension field $K$ with $[K:\F] = 2$: choose any $\theta \in K \setminus \F$, and apply the map $\funcimpl{(\alpha,\beta)}{(\alpha + \theta \beta,\beta)}$ to the points $\pts$, and replace $f$ by $\hat f := f(X-\theta Y,Y)$.
We can now apply the NZ algorithm; that the operations will take place in $K$ costs only a small constant factor compared to operations in $\F$ since the extension degree is only $2$.
The main problem is that $\deg_Y \hat f$ is now generically $\max(d_X, d_Y)$, so assuming $d_Y < d_X$, the complexity of the NZ algorithm becomes roughly $\Oh((n + d_X^2)d_X^{0.635 + \epsilon})$.
For Hermitian codes, this yields $\Oh(n^{1.756 + \epsilon})$.

In their celebrated paper \cite{kedlaya-umans-2008}, Kedlaya and Umans (KU) gave an algorithm for bivariate MPE with complexity $\Oh((n + d_X^2)^{1 + \epsilon})$ bit operations for any $\epsilon > 0$, assuming $d_Y < d_X$.
In outline, the algorithm works over prime fields by lifting the data to integers, then performing the MPE many times modulo many small primes, and then reassembling the result using the Chinese Remainder theorem.
Over extension fields, some more steps are added for the lift to work.
Note that the KU algorithm has quasi-linear complexity when $d_Y \approx d_X$.
As mentioned, our main interest is $d_Y \ll d_X$.
For our running example of the Hermitian code, applying the KU algorithm has complexity $\Oh(n^{4/3 + \epsilon})$.

\subsection{Related work on bivariate interpolation}
Let us turn to the interpolation problem.
The input is now a point set $\pts \subset \F^2$ and interpolation values $\vals: \pts \rightarrow \F$, and we seek $f \in \F[X,Y]$ such that $f(P) = \vals(P)$ for each $P \in \pts$.
There are infinitely many such $f$, so to further restrict, or even make the output unique, one has to pose restrictions on the monomial support on the output $f$.
We discuss the setting relevant to us in \cref{ssec:encoding-map}.

Efficient interpolation algorithms include Pan's \cite{pan_simple_1994}, which works for points on grids, and its generalization by van der Hoeven and Schost \cite{van_der_hoeven_multi-point_2013}, which works for certain structured subsets of grids.
The monomial support output by these algorithms does not match our requirements.
Given a $\hat f \in \F[X,Y]$ which correctly interpolates the sought values, but has incorrect monomial support,
a general way to solve the problem is to let $\VanIdeal \subset \F[X,Y]$ be the ideal of all polynomials which vanish at the points of $\pts$, and then precompute a \Grobner basis $G$ of $\VanIdeal$ under an appropriate monomial order: then $\hat f \rem G$, the unique remainder of $\hat f$ divided by the basis $G$, will have ``minimal'' possible monomial support under this order.
One may use van der Hoeven's fast division algorithm \cite{van_der_hoeven_complexity_2015} for this step.
This is exactly the strategy we use in \cref{sec:find-small-interp}, where we first compute an $\hat f$ by generalizing Pan's interpolation algorithm to work for semi-grids.
This $\hat f$ can be described by a closed-form expression, see \cref{lem:explicit_interpolation}.
A similar expression was used in a decoding algorithm for AG codes over the Hermitian curve in \cite{lee_list_2009}, and it was shown in \cite{nielsen_sub-quadratic_2015} how to compute it fast; that approach can be seen as a special case of our algorithm.

A very different, and very flexible, interpolation algorithm is simply to solve the interpolation constraints as a system of linear constraints in the coefficients to the monomials in the monomial support.
Solving the resulting $n \times n$ linear system using Gaussian elimination would cost $\Oh(n^\omega)$, where $\omega < 2.37286$ is the exponent of matrix multiplication \cite{le_gall_powers_2014}.
In our case, we can do much better by observing that for the monomial support we require, the linear system would have low \emph{displacement rank}, namely $a$, so we could use the algorithm for structured system solving by Bostan et al.~\cite{bostan_solving_2008,bostan_matrices_2017} for a cost of $\sOh(a^{\omega-1} n)$.
For the Hermitian codes this yields a complexity of roughly $\sOh(n^{1.458})$, which is much slower than ours.
However, for general \Cab codes this is our main contender, and we compare again in \cref{ssec:unencoding,ssec:encoding-good-curves}: the take-away is that for most parameters of interest, our algorithm seems to be faster.

\section{Preliminaries}
\label{sec:preliminaries}

\subsection{Codes from \Cab curves}
\label{ssec:cab_codes}

In this subsection we discuss in some detail the family of AG codes for which we want to find fast encoders and unencoders. Note that these AG codes and the algebraic curves used to construct them were previously studied in \cite{miura-1993,miura-kamiya-1993}; these results will be mentioned here.
Also they occur as a special case of the codes and curves studied in \cite{feng_rao_1994,handbook}.

For a bivariate polynomial $\Hpol = \sum_{i,j} a_{ij}X^iY^j \in \F[X,Y]$ with coefficients in a finite field $\F$, we define $\supp(\Hpol)=\{X^iY^j \mid a_{ij} \neq 0\}.$

 \begin{definition}\label{def_Cabpol}
Let $a,b$ be positive, coprime integers. We say that a bivariate polynomial $\Hpol \in \F[X,Y]$ is a \Cab polynomial if:
 \begin{itemize}
 \item $X^b,Y^a \in \supp(\Hpol)$,
 \item $X^iY^j \in \supp(\Hpol) \implies ai + bj \leq ab$,
 \item The ideal $\ideal{\Hpol,\pder{\Hpol}{X},\pder{\Hpol}{Y}} \subseteq \F[X,Y]$ is equal to the unit ideal $\F[X,Y]$.
 \end{itemize}
\end{definition}

\begin{remark}
  In our algorithms the two variables $X$ and $Y$ are treated differently, which entails that complexities are not invariant under swapping of $X$ and $Y$ in $\Hpol$. We will commit to the arbitrary choice of ``orienting'' our algorithms such that their complexities depend explicitly only on $a$, which means that whenever the input is not assumed to have special structure which depends on $a$ and $b$, it is of course sensible to swap $X$ and $Y$ so $a \leq b$. Note that the case $a=b$ can only occur when $a=b=1$ since they are coprime. We will disregard this degenerate case.
\end{remark}

 Define $\deg_{a,b}$ to be the $(a,b)$-weighted degree of a bivariate polynomial.  More concretely: $\deg_{a,b}(X^i Y^j) = ai + bj$. The first two conditions imply that $\Hpol(X,Y)=\alpha X^b+\beta Y^a+G(X,Y),$ where $\alpha,\beta \in \F \setminus \{0\}$ and $\deg_{a,b}(G(X,Y)) < ab.$
In particular, the polynomial $\Hpol$ is absolutely irreducible \cite[Cor. 3.18]{handbook}. The theory of Newton polygons, i.e., the convex hull of $\{(i,j) \mid X^iY^j \in \supp(\Hpol)\}$, can also be used to conclude this \cite{gao-2001}.

This implies that the a \Cab polynomial defines an algebraic curve. Following \cite{miura-1993}, the type of algebraic curves obtained in this way are called \Cab curves. As observed there, these curves, when viewed as projective curves, have exactly one point at infinity $P_\infty$, which, if singular, is a cusp. What this means can be explained in a very simple way using the language of function fields. A given \Cab polynomial $\Hpol$ defines a \Cab curve, with function field $\funcf = \F(x,y)$ obtained by extending the rational function field $\F(x)$ with a variable $y$ satisfying $\Hpol(x,y) = 0$. Since $\Hpol$ is absolutely irreducible, $\F$ is the full constant field of $\funcf.$ The statement that the point $P_\infty$, if it is a singularity, is a cusp, just means that the function $x$ has exactly one place of $\funcf$ as a pole. With slight abuse of notation, we denote this place by $P_\infty$ as well. The defining equation of a \Cab curve, directly implies that for any $i,j \in \mathbb{Z}$, the function $x^iy^j$ has pole order $\deg_{a,b}(x^iy^j)=ai+bj$ at $P_\infty.$ In particular, $x$ has pole order $a$ and $y$ has pole order $b$ at $P_\infty.$

The \emph{genus} of a function field is important for applications in coding theory, since it occurs in the Goppa bound on the minimum distance of AG codes. It is observed in \cite{miura-1993} that the genus of the function field $\funcf= \F(x,y)$ defined above equals $g=(a-1)(b-1)/2$.  Indeed, this is implied by the third condition in \cref{def_Cabpol}, also see \cite[Theorem 4.2]{beelen-pellikan-2000}. We collect some facts in the following proposition. These results are contained in \cite{miura-1993}, expressed there in the language of algebraic curves.

\begin{proposition}[\protect{\cite{miura-1993}}]
  \label{prop:c-ab}
Let $\Hpol \in \F[X,Y]$ be a \Cab polynomial and $\funcf=\F(x,y)$ the corresponding function field. Then $\funcf$ has genus $g =\frac{1}{2}(a-1)(b-1).$ The place $P_\infty$ is rational and a common pole of the functions $x$ and $y$ and in fact the only place which is a pole of either $x$ or $y$. For any $i,j \in \mathbb{Z}$, the function $x^iy^j \in \funcf$ has pole order $\deg_{a,b}(x^iy^j)=ai+bj$ at $P_\infty.$
\end{proposition}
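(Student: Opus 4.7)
My plan is to prove \cref{prop:c-ab} in three linked stages: extracting the degree of the extension $\funcf/\F(x)$, pinning down the places above the pole of $x$, and finally deducing the genus via the Weierstrass semigroup.

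For the field extension, the \Cab monomial constraint forces $Y^a$ to be the unique top-$Y$-degree monomial of $\Hpol$, with nonzero constant leading coefficient $\beta$. Since $\Hpol$ is absolutely irreducible, $\Hpol/\beta$ is monic in $Y$ of degree $a$ and irreducible over $\F(x)$, so $[\funcf:\F(x)] = a$. Moreover, the Jacobian condition in \cref{def_Cabpol} makes the affine curve smooth, so $\F[x,y]$ is already integrally closed in $\funcf$; hence it equals the integral closure of $\F[x]$ and is free over $\F[x]$ with basis $1, y, \ldots, y^{a-1}$.

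For the places above the pole $Q_\infty$ of $x$ in $\F(x)$, I consider any place $P \mid Q_\infty$ with ramification index $e$ and set $v := v_P(y)$. Substituting $(x,y)$ into $\alpha X^b + \beta Y^a + G = 0$ and exploiting that every monomial $X^iY^j$ in $\supp(G)$ satisfies $ai + bj$ \emph{strictly} less than $ab$, a term-by-term valuation comparison shows that the only way the three contributions can combine to $0$ is $av = -eb$, with every $G$-term forced to have strictly larger valuation. Together with $\gcd(a,b)=1$ this yields $a \mid e$; since $\sum_{P \mid Q_\infty} e(P|Q_\infty) f(P|Q_\infty) = a$, there must be a single place $P_\infty$ above $Q_\infty$, with $e = a$ and residue degree $1$, hence rational. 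The pole orders $v_{P_\infty}(x^iy^j) = -(ai + bj)$ follow immediately for all $i,j \in \ZZ$.

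For the genus, I use the basis from the first stage: any $f \in \funcf$ whose only pole is $P_\infty$ lies in $\F[x,y]$ by integral closure, so $f = \sum_{j=0}^{a-1} f_j(x)\, y^j$ with $f_j \in \F[x]$. The pole orders $-v_{P_\infty}(f_j(x) y^j) = a\deg(f_j) + bj$ lie in distinct residue classes modulo $a$ (because $\gcd(a,b)=1$ and $0 \leq j < a$), so no cancellation is possible and the Weierstrass semigroup at $P_\infty$ equals the numerical semigroup $\ideal{a,b}$. Its number of gaps is $(a-1)(b-1)/2$ by the Sylvester--Frobenius formula, and comparing with Riemann--Roch applied to $mP_\infty$ for $m$ beyond the Frobenius number of $\ideal{a,b}$ gives $g = (a-1)(b-1)/2$. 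The main obstacle I expect is the valuation bookkeeping in the second stage: ruling out both $v < -eb/a$ and $v > -eb/a$ cleanly relies on the \emph{strict} inequality $ai + bj < ab$ for the support of $G$, which is precisely where the \Cab hypothesis on the Newton polygon earns its keep.
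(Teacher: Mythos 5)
The paper does not prove \cref{prop:c-ab} at all---it simply cites \cite{miura-1993} (and \cite{beelen-pellikan-2000} for the genus claim) and gives an informal translation of the statement into function-field language. So there is no ``paper's proof'' to compare against; what you have done is reconstruct a self-contained argument, and it is correct.

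Your three stages fit together properly. In stage one, the constraint $ai+bj\le ab$ does force $Y^a$ to be the unique monomial of highest $Y$-degree in $\supp(\Hpol)$, so $\Hpol$ is (up to a nonzero constant) monic of degree $a$ in $Y$ and primitive over $\F[X]$; with absolute irreducibility and Gauss's lemma this gives $[\funcf:\F(x)]=a$, and the Jacobian condition yields smoothness of the affine model, hence $\F[x,y]$ is regular, hence integrally closed, hence equal to the integral closure of $\F[x]$ in $\funcf$. Stage two is where the real work is, and you flagged it accurately: one must rule out $v\ge0$, $av<-eb$, and $av>-eb$ by exhibiting, in each case, a single monomial of $\Hpol$ with strictly smallest $P$-valuation, contradicting $\Hpol(x,y)=0$. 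Each case goes through precisely because of the strict inequality $ai+bj<ab$ on the $G$-terms; once $av=-eb$ is forced, coprimality of $a,b$ gives $a\mid e$, and the fundamental identity $\sum_{P\mid Q_\infty} e(P)f(P)=a$ collapses everything to a single rational, totally ramified $P_\infty$ with $v_{P_\infty}(x)=-a$ and $v_{P_\infty}(y)=-b$; integrality of $y$ over $\F[x]$ then guarantees $y$ has no other poles. Stage three is the standard Weierstrass-semigroup computation: functions with poles only at $P_\infty$ are exactly $\F[x,y]$ by integral closure, the standard form $\sum_{j<a}f_j(x)y^j$ has pairwise distinct pole orders modulo $a$ so the semigroup is exactly $\langle a,b\rangle$, and Sylvester's gap count $(a-1)(b-1)/2$ combined with Riemann--Roch at $mP_\infty$ for $m$ past the conductor gives the genus. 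This is a complete and correct proof of what the paper merely cites.
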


For a divisor $D$ of the function field $\funcf$, we denote by $\riroch(D)$ the Riemann--Roch space associated to $D$.
The third condition in \cref{def_Cabpol} implies that a \Cab curve cannot have singularities, apart from the possibly singular point at infinity. This has two important consequences. In the first place, all rational places of $\funcf$ distinct from $P_\infty$, can be identified with the points $(\alpha,\beta) \in \F^2$ satisfying $\Hpol(\alpha,\beta)=0$. We will call these places the finite rational places of $\funcf$.
Throughout the paper we will, by a slight abuse of notation, use a finite place $P_{\alpha,\beta}$ and its corresponding rational point $(\alpha,\beta)$ interchangeably.
A second consequence, as observed in \cite{miura-1993}, is that the functions in $\F[x,y]$ are the \emph{only} functions in $\funcf$ with poles only at $P_\infty$; in other words, $\riroch(mP_\infty) = \{ f \in \F[x,y] \mid \deg_{a, b}(f) \leq m \}$.
Since $\Hpol(x,y)=0$, any $f \in \F[x,y]$ can be uniquely written as a polynomial with $y$-degree at most $a-1$.
We will call this the \emph{standard form} of $f$.
We are now ready to define \Cab codes.

\begin{definition}
  \label{def:cab-codes}
Let $\Hpol$ be a \Cab polynomial and $\funcf$ the corresponding function field. Further, let $P_1,\hdots,P_n$ be distinct, finite rational places of $\funcf$ and let $m$ be a non-negative integer. Then the \Cab code of order $m$ is defined to be:
\begin{align*}
  \agcode_{\Hpol}(\pts,m) &:= \setcomp{ ( \ev_\pts(f) }{f \in \riroch(mP_\infty)} \subseteq \F^n \ , \textrm{ where} \\
  \ev_\pts(f) &= \big(  f(P_1), \hdots, f(P_n) \big) \ .
\end{align*}
\end{definition}
In the standard notation for AG codes used for example in \cite{stichtenoth_algebraic_2009}, the code $\agcode_{\Hpol}(\pts,m)$ is equal to the code $C_{\mathcal L}(D,mP_\infty),$ with $D=P_1+\cdots+P_n.$
Since the divisor $mP_\infty$ is a multiple of a single place, the codes $\agcode_{\Hpol}(\pts,m)$ are examples of what are known as one-point AG codes.
Using for example \cite[Theorem 2.2.2]{stichtenoth_algebraic_2009}, we obtain that $\agcode_{\Hpol}(\pts,m)$ is an $[n, k, d]$ linear code, where $k = \dim\big( \riroch(mP_\infty)\big)-\dim\big( \riroch(mP_\infty-D)\big)$  and $d \geq n-m.$
In particular, $k=n$ if $m > n+2g-2$.
Therefore we will from now on always assume that $m \le n+2g-1.$
If $m<n,$ then $k = \dim\big( \riroch(mP_\infty)\big) \ge m+1-g$ and if additionally $2g-2 <m$, then $k=m+1-g.$
The precise minimum distance of a  \Cab code is in general not known from just the defining data.
Lastly, we also note the obvious bound $n \leq q^2$, where $q = |\F|$, due to the identification of rational places with points in $\F^2$.

When comparing algorithms pertaining to AG codes, as well as many other types of codes, it is customary to assume that the dimension $k$ grows proportional to $n$, denoted $k \in \Theta(n)$, i.e.~that the rate goes to some constant as $n \rightarrow \infty$.
For any such family of \Cab codes this implies that $m \in \Theta(n)$.
This in turn means that any message polynomial $f \in \riroch(mP_\infty)$ in standard form satisfies $\deg_Y f < a$ and $\deg_X f < m/a \in \Theta(n/a)$.
For most interesting \Cab-codes the genus $g \ll n$, so $ab \ll n$, while $n$ is relatively large compared to $q$, as measured e.g.~against the Hasse--Weil bound, see \cref{ssec:encoding-good-curves}. This means that in the cases of most interest to us, the message polynomials tend to have very different $X$ and $Y$ degrees.

\subsection{The evaluation-encoding map}
\label{ssec:encoding-map}

An encoding for a linear code such as $\agcode_\Hpol(\pts, m)$ is a linear, bijective map $\phi: \F^k \rightarrow \agcode_\Hpol(\pts, m) \subseteq \F^n$.
Computing the image of $\phi$ for some $\vec m \in \F^k$ is called ``encoding'' $\vec m$.
The process of computing the inverse, i.e.~given a codeword $\vec c \in \agcode_\Hpol(\pts, m)$ recover the message $\vec m := \phi^{-1}(\vec c)$, is often unnamed in the literature.
For lack of a better term (and since ``decoding'' is reserved for error-correction), we will call it ``unencoding''.

In light of \cref{def:cab-codes}, we can factor $\phi$ as $\phi = \ev_\pts \circ \varphi$, where $\varphi: \F^k \rightarrow \riroch(mP_\infty)$ is linear and injective.
If we choose $\varphi$ sufficiently simple and such that it outputs elements of $\F[x,y]$ in standard form, the computational task of applying $\phi$ reduces to computing $\ev_\pts$, i.e.~multipoint evaluation of bivariate polynomials of $(a,b)$-weighted degree at most $m$.
A natural basis for $\riroch(mP_\infty)$ is
\begin{equation}
  \label{eqn:msg_basis}
  B = \{ x^i y^j \mid \deg_{a,b}(x^iy^j) \leq m \land j \leq a-1 \} \ .
\end{equation}
If $k = \dim(\riroch(mP_\infty))$, then $|B| = k$, and we therefore choose $\varphi$ as taking the elements of a message $\vec m$ as the coefficients to the monomials of this basis in some specified order.
Then applying $\varphi$ takes no field operations at all.

If $k < \dim(\riroch(mP_\infty))$ then $|B| > k$, and this may happen when $m \geq n$.
We should then choose a subset $\hat B \subset B$ of $k$ monomials such that the vectors $\{ \ev_\pts(x^iy^j) \}_{x^i y^j \in \hat B}$ are linearly independent.
For our encoding algorithms, the choice of $\hat B$ will not matter.
However, for unencoding, we will assume that this choice has been made so that the monomials in $\hat B$ are, when sorted according to their $(a,b)$-weighted degrees, lexicographically minimal.
Put another way, a monomial $x^i y^j \in B$ is \emph{not} in $\hat B$ exactly when there is a polynomial $g \in \ker(\ev_\pts)$ with $\LM_{\orderab}(g) = x^i y^j$, where $\LM_{\orderab}$ denotes leading monomial according to $\orderab$, the $(a,b)$ weighted degree breaking ties by $x^b \orderab y^a$.
Such monomials $x^i y^j$ are what we will call ``reducible'' monomials in \cref{sec:find-small-interp}.

$\hat B$ is easy to precompute: start with $\hat B = \emptyset$, and go through the monomials of $B$ in order of increasing $(a,b)$-weighted degree.
For each such $x^u y^v$ if $\ev_\pts(x^uy^v)$ is linearly independent from $\{ \ev_\pts(x^iy^j) \}_{x^i y^j \in \hat B}$, then add to $\hat B$.

\subsection{Notation and computational tools}
\label{ssec:comp-tools}

For any point set $\pts \subseteq \F^2$ we define $\X(\pts) := \setcomp{\alpha \in \F}{\exists \beta \in \F \st (\alpha,\beta) \in \pts}$, i.e. the set of all $X$-coordinates that occur in $\pts$.
We write $\nX(\pts) := \abs{\X(\pts)}$ for the number of distinct $X$-coordinates.
Similarly, for any $\alpha \in \F$ we define $\Y_{\alpha}(\pts) := \setcomp{\beta \in \F}{(\alpha,\beta) \in \pts}$, i.e. the set of $Y$-coordinates that occur for a given $X$-coordinate $\alpha$,
and we let
\[
  \nY(\pts) := \max_{\alpha \in \pts}\abs{\Y_{\alpha}(\pts)} .
\]
In discussions where it is clear from the context which point set $\pts$ we are referring to, we may simply write $\X,\Y,\nX,\nY$.
Note that if $\pts$ is a subset of the rational points of a \Cab curve with polynomial $H(X,Y)$, then $\nY(\pts) \leq a =: \deg_Y(H)$ since for any given value of $\alpha$, there are at most $a$ solutions to the resulting equation in $H(\alpha, Y)$.

\begin{definition}
  \label{def:semi-grid}
  A point set $\pts \subset \F^2$ is a \emph{semi-grid} if $|\Y_\alpha(\pts)| \in \{ 0, \nY(\pts) \}$ for each $\alpha \in \F$.
\end{definition}

As outlined in \cref{ssec:cab_codes}, we distinguish between the bivariate polynomial ring $\F[X,Y]$ and the subset of functions in the \Cab function field spanned by $x$ and $y$, denoted $\F[x,y]$.
However, there is a natural inclusion map of functions $f \in \F[x,y]$ in standard form into a polynomial $f(X,Y) \in \F[X,Y]$ of $Y$-degree less than $a$.
In discussions and algorithms, we sometimes abuse notation by more or less explicitly making use of this inclusion map.

For ease of notation, our algorithms use \emph{lookup tables}, also known as dictionaries or associative arrays.
This is just a map $\mathcal{A} \rightarrow \mathcal{B}$ between a finite set $\mathcal{A}$ and a set $\mathcal{B}$ but where all the mappings have already been computed and stored, and hence can quickly be retrieved.
We use the notation $\vals \in \mathcal B^{\mathcal A}$ to mean a lookup table from $\mathcal A$ to $\mathcal B$.
For $a \in \mathcal A$, we write $\vals[a] \in \mathcal B$ for the mapped value stored in $\vals$.
Note that this is for notational convenience only: all our uses of lookup tables could be replaced by explicit indexing in memory arrays, and so we will assume that retrieving or inserting values in tables costs $O(1)$.

Our complexity analyses count basic arithmetic operations in the field $\F$ on an algebraic RAM model.
We denote by $\Mult n$ the cost of multiplying two univariate polynomials in $\F[X]$ of degree at most $n$.
We can take $\Mult{n} \in O(n \log n \log\log n)$ \cite{cantor_fast_1991}, or the slightly improved algorithm of \cite{harvey_faster_2017} with cost $\Mult{n} \in O(n \log n\ 8^{\log^* n})$, both of which are in $\sOh(n)$.
For precision, our theorems state complexities in big-$\Oh$ including all log-factors, and we then relax the expressions to soft-$\Oh$ for overview.

Our algorithms take advantage of two fundamental computational tools for univariate polynomials: \emph{fast multipoint evaluation} and \emph{fast interpolation}.
These are classical results, see e.g.~\cite[Corollaries 10.8 and 10.12]{von_zur_gathen_modern_2012}.

\begin{proposition}
  \label{prop:uni-mpe}
  There exists an algorithm $\UniMPE$ which inputs a univariate polynomial $h \in \F[Z]$ and evaluation points $\Cset \subseteq \F$, and outputs a table $\vals: \F^\Cset$ such that $\vals[\alpha] = h(\alpha)$ for every $\alpha \in \Cset$.
  It has complexity
  \[
    \Oh(\Multlog{\deg h + \abs{\Cset}}) \subset \sOh(\deg h + \abs{\Cset})
  \]
  operations in $\F$.
\end{proposition}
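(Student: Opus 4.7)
\medskip

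\noindent\textbf{Proof plan.} This is a well-known classical result, and the plan is to prove it via the standard \emph{subproduct tree} technique. Let $n = |\Cset|$, enumerate $\Cset = \{\alpha_1, \ldots, \alpha_n\}$ (padding with arbitrary distinct elements of $\F$ or extending to a power of two if convenient), and let $d = \deg h$. The approach is to build, in a first pass, a binary tree $\tree$ of subproducts where leaf $i$ stores $Z - \alpha_i$ and every internal node stores the product of its two children. The root stores $M(Z) := \prod_{i=1}^n (Z - \alpha_i)$.

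In a second pass, reduce $h$ modulo the node polynomials by descending the tree: at the root replace $h$ by $h \bmod M$; then at each internal node $v$ with stored polynomial $M_v$ and children $v_L, v_R$, pass down $h \bmod M_{v_L}$ and $h \bmod M_{v_R}$, each computed from the polynomial already reduced modulo $M_v$ (a cheaper reduction, since the divisor has smaller degree than the current dividend). At each leaf $i$, the residue is simply the constant $h(\alpha_i)$, which I store in $\vals[\alpha_i]$ and return.

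\medskip

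\noindent\textbf{Complexity accounting.} For the tree construction, at depth $\ell$ from the leaves there are $O(n/2^\ell)$ multiplications producing polynomials of degree $O(2^\ell)$, each costing $\Oh(\Mult{2^\ell})$, so the cost per level is $\Oh(\Mult{n})$ (using super-additivity of $\MultM$) and the total over the $\Oh(\log n)$ levels is $\Oh(\Multlog{n})$. For the descent, fast polynomial division (via the Newton-iteration reciprocal of the divisor) reduces a polynomial of degree $D$ modulo a divisor of degree $D'$ in $\Oh(\Mult{D})$ operations; an identical level-by-level analysis once inside the tree yields $\Oh(\Multlog{n})$. The one step needing extra care is the initial reduction $h \bmod M$ when $d > n$: this costs $\Oh(\Mult{d})$ via fast division, and after it all subsequent polynomials have degree $< n$. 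Summing contributions gives $\Oh(\Mult{d} + \Multlog{n}) \subseteq \Oh(\Multlog{d+n})$, as claimed.

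\medskip

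\noindent\textbf{Main obstacle.} There is no real mathematical obstacle, as all pieces are classical; the only subtlety is the bookkeeping for unbalanced inputs $d \gg n$ or $d \ll n$, where one must be careful not to charge $\Mult{n}$ at every level when $d < n$, and to perform the one-shot reduction modulo $M$ up front when $d > n$ so that the tree-descent phase never sees a polynomial of degree exceeding the current subproduct by more than a factor of two. Both cases are handled uniformly by the estimate $\Oh(\Multlog{d + n})$. I would cite \cite[Cor.~10.8]{von_zur_gathen_modern_2012} for the fast division primitive and \cite[Cor.~10.12]{von_zur_gathen_modern_2012} for the multipoint evaluation wrap-up, and omit routine details.
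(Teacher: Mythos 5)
Your proof is correct and is precisely the standard subproduct-tree argument behind the result the paper cites (\cite[Cor.~10.8, 10.12]{von_zur_gathen_modern_2012}); the paper itself simply invokes those references without giving a proof, so there is nothing substantive to compare beyond noting that your reconstruction, including the one-shot reduction modulo the root polynomial to handle $\deg h \gg |\Cset|$, matches the textbook treatment.
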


\begin{proposition}
  \label{prop:uni-interp}
  There exists an algorithm $\UniInterp_Z$ which inputs evaluation points $\Cset \subseteq \F$ and evaluation values $\vals \in \F^{\Cset}$, and outputs the unique $f \in \F[Z]$ such that $\deg f < k$ and $f(\alpha) = \vals[\alpha]$ for each $\alpha \in \Cset$, where $k = |\Cset|$.
  It has complexity $\Oh(\Multlog{k}) \subset \sOh(k)$ operations in $\F$.
\end{proposition}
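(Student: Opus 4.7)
The plan is to follow the standard subproduct-tree-based approach to fast interpolation, which rests on the Lagrange interpolation formula
\[
  f(Z) \;=\; \sum_{\alpha \in \Cset} \frac{\vals[\alpha]}{M'(\alpha)} \cdot \frac{M(Z)}{Z-\alpha} ,
\]
where $M(Z) = \prod_{\alpha \in \Cset}(Z-\alpha)$. Uniqueness of $f$ is immediate: any two candidates differ by a polynomial of degree less than $k$ that vanishes on all of $\Cset$, hence by the zero polynomial, so the above closed form is the sought output and it remains to evaluate it fast.

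First, I would build the \emph{subproduct tree}: fix a balanced binary tree whose $k$ leaves are indexed by the elements of $\Cset$, and to each internal node $v$ associate the polynomial $M_v(Z) = \prod_{\alpha \in \Cset_v}(Z-\alpha)$, where $\Cset_v$ denotes the leaves below $v$. The tree is built bottom-up by one multiplication per internal node; since at each level the node-polynomials have total degree $k$ and each node of degree $d$ is obtained by a product of two polynomials of degree roughly $d/2$, the super-additivity of $\MultM$ yields $O(\MultM(k))$ work per level, and thus $O(\Multlog{k})$ for the whole tree. The root polynomial is $M(Z)$, and $M'(Z)$ is obtained from it by coefficient-wise differentiation in $O(k)$ operations.

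Next, I would evaluate $M'$ at every point of $\Cset$ using $\UniMPE$ from \cref{prop:uni-mpe}, costing $O(\Multlog{k})$, and form the table $c_\alpha := \vals[\alpha]/M'(\alpha)$ in $O(k)$ further operations. The final step is a divide-and-conquer traversal of the subproduct tree to assemble $f$: for each node $v$ with children $v_L, v_R$, the partial sum
\[
  S_v(Z) \;=\; \sum_{\alpha \in \Cset_v} c_\alpha \prod_{\beta \in \Cset_v \setminus \{\alpha\}}(Z-\beta)
\]
satisfies the recurrence $S_v = M_{v_R}\cdot S_{v_L} + M_{v_L}\cdot S_{v_R}$, with $S_{\text{leaf}_\alpha} = c_\alpha$. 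The products at level $\ell$ again involve polynomials whose total degree sums to $O(k)$, so each level costs $O(\MultM(k))$, and after $\log k$ levels we obtain $f = S_{\text{root}}$. Summing the costs of tree construction, multipoint evaluation of $M'$, and the bottom-up assembly gives the claimed $O(\Multlog{k})$ bound.

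There is no substantive obstacle beyond the routine bookkeeping of the subproduct recursion; the only design point that actually matters is the balancing of the tree, which is what ensures the per-level work stays in $O(\MultM(k))$ rather than degrading to $O(\MultM(k)^2/k)$ in an unbalanced variant. With this in place, the complexity bound falls out of a straightforward geometric-sum argument over the $\log k$ levels.
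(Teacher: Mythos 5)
Your proof is correct and is precisely the standard subproduct-tree algorithm for fast interpolation; the paper does not give its own proof but simply cites \cite[Corollaries 10.8 and 10.12]{von_zur_gathen_modern_2012}, and the argument you reconstruct (Lagrange formula, subproduct tree, multipoint evaluation of $M'$, balanced divide-and-conquer assembly) is exactly the one in that reference. As a side remark, the bottom-up assembly recurrence $S_v = M_{v_R} S_{v_L} + M_{v_L} S_{v_R}$ you use is the same mechanism the paper later abstracts and generalizes in its $\LinMod$ subroutine (\cref{algo:Combine}) to handle $\F[Y]$-valued coefficients.
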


\section{A fast encoding algorithm}
\label{sec:fast-encoding}

Let us now consider an algorithm for computing the encoding map for \Cab codes. We are given a message vector $f \in \riroch(mP_\infty) \subset \F[X,Y]/\ideal{\Hpol}$ and $n$ finite rational places $P_1, \hdots, P_n$ of $F$; we wish to compute $f(P_1),\hdots,f(P_n)$. We translate this problem into bivariate polynomial multipoint-evaluation by lifting $f$ to a polynomial in $\F[X,Y]$ in standard form, and identifying each $P_i$ with a pair $(\alpha_i,\beta_i), \in \F^2$ such that $\Hpol(\alpha_i,\beta_i) = 0$ for $i = 1, \hdots, n$. In the following subsection we will focus on the evaluation problem at hand, while in \cref{ssec:fast-encoding} we will apply the results to encoding of codes over \Cab curves.

\subsection{Multipoint-Evaluation of Bivariate Polynomials}
\label{ssec:mult-eval-bivar}
Let us for now forget that we originally came from the setting of codes. Suppose that we are given a set $\pts \subseteq \F^2$ of points with $\abs{\pts} = n$ and a bivariate polynomial $f \in \F[X,Y]$ with $\deg_Xf = d_X$ and $\deg_Yf = d_Y$. This naive approach will have complexity $\Oh(nd_Xd_Y)$.

We will generalize Pan's multipoint evaluation algorithm \cite{pan_simple_1994} (see \cref{sec:relat-work-bivar}), and show that it performs well on point sets $\pts$ where most $|\Y_\alpha(\pts)|$ are roughly the same size for each $\alpha \in \X(\pts)$.

The idea of the algorithm is the following:
we write
\begin{align*}
  f(X,Y) = f_0(X) + f_1(X)Y + \hdots + f_{d_Y}(X)Y^{d_Y} \quad, f_i \in \F[X] \ ,
\end{align*}
and then proceed by $d_Y+1$ univariate multipoint evaluations of the polynomials $f_i(X)$, $i = 0,\hdots,d_Y$, each evaluated on the values $\X(\pts)$.
For each $\alpha \in \X(\pts)$, we can therefore construct a univariate polynomial in $Y$ without further computations:
\[
  g_\alpha(Y) = f_0(\alpha) + f_1(\alpha)Y + \hdots + f_{d_Y}(\alpha)Y^{d_Y} \quad, f_i \in \F[X] \ .
\]
Again using univariate multipoint evaluation, we obtain $g_\alpha(\beta) = f(\alpha, \beta)$ for each $\beta \in \Y_\alpha(\pts)$. For the algorithm listing see \cref{algo:bi-eval}.

\begin{algorithm}
  \caption{$\BiMPE$: Bivariate multipoint evaluation}\label{algo:bi-eval}
  \KwInput{\;
    Bivariate polynomial $f = f_0(X) + f_1(X)Y + \hdots + f_{d_Y}(X)Y^{d_Y} \in \F[X,Y]$. \;
    Evaluation points $\pts \subseteq \F^2$.
}    
\KwOutput{\;
  Evaluation values $\vals = (f(\alpha,\beta))_{(\alpha,\beta) \in \pts} \in \F^{\pts}$.\;}
$\X \assign \X(\pts)$
\label{line:bi-eval:1}\;
$\Y_{\alpha} \assign \Y_{\alpha}(\pts)$
\label{line:bi-eval:2}\;
\lFor{$i = 1,\hdots,d_Y$}{$\vals_i \assign \UniMPE(f_i, \X) \in \F^{\X}$}
\label{line:bi-eval:3}
\ForEach{$\alpha \in \X$}{
  $g_{\alpha} \assign \sum_{i=0}^{d_Y} \vals_i[\alpha] Y^i \in \F[Y]$ \label{line:bi-eval:4}\;
  $\mathcal{G}_{\alpha} \assign \UniMPE(g_{\alpha},\Y_{\alpha}) \in \F^{\Y_{\alpha}}$ \label{line:bi-eval:5}\;
}
\Return $\vals \assign (\mathcal{G}_{\alpha}[\beta])_{(\alpha,\beta) \in \pts} \in \F^{\pts}$.
\label{line:bi-eval:6}

\end{algorithm}
\begin{theorem} \label{thm:bi-eval}
  \cref{algo:bi-eval} is correct.
  It has complexity
  \begin{align*}
    \Oh(d_Y\Multlog{d_X + \nX } + \nX \Multlog{d_Y + \nY}) \subset \sOh(d_Yd_X + \nX (d_Y + \nY))
  \end{align*}
  operations in $\F$, where $d_X = \deg_Xf$, $\nX = \nX(\pts)$, and $\nY = \nY(\pts)$.
\end{theorem}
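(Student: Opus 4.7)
The plan is to prove correctness first by verifying that the intermediate object $g_\alpha(Y)$ built in \cref{line:bi-eval:4} is precisely the univariate specialization $f(\alpha, Y)$, and then to aggregate the cost of the lines of \cref{algo:bi-eval} using \cref{prop:uni-mpe}.

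For correctness, I would write $f(X,Y) = \sum_{i=0}^{d_Y} f_i(X) Y^i$ as in the input. \cref{line:bi-eval:3} calls $\UniMPE(f_i, \X)$ so that after the loop, $\vals_i[\alpha] = f_i(\alpha)$ for every $\alpha \in \X$ and $i = 0,\ldots,d_Y$. Substituting these values into the expression defined in \cref{line:bi-eval:4} gives
\[
  g_\alpha(Y) \;=\; \sum_{i=0}^{d_Y} f_i(\alpha)\, Y^i \;=\; f(\alpha, Y) \in \F[Y].
\]
Then \cref{line:bi-eval:5} calls $\UniMPE(g_\alpha, \Y_\alpha)$, so $\mathcal{G}_\alpha[\beta] = g_\alpha(\beta) = f(\alpha,\beta)$ for each $\beta \in \Y_\alpha$. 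Since by definition $\pts = \{(\alpha,\beta) : \alpha \in \X,\ \beta \in \Y_\alpha\}$, the output assembled in \cref{line:bi-eval:6} is exactly the desired table of evaluations.

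For the complexity, I would analyze the algorithm line by line. Constructing $\X$ and the tables $\Y_\alpha$ in \cref{line:bi-eval:1,line:bi-eval:2} costs $\Oh(n)$, which is dominated by the later steps. The $d_Y + 1$ calls to $\UniMPE$ in \cref{line:bi-eval:3} each cost $\Oh(\Multlog{d_X + \nX})$ by \cref{prop:uni-mpe}, giving a total of $\Oh(d_Y \Multlog{d_X + \nX})$ for that step. Assembling the $g_\alpha$ in \cref{line:bi-eval:4} is free from field operations (it just collects already-computed coefficients). Finally, the calls in \cref{line:bi-eval:5} each cost $\Oh(\Multlog{d_Y + |\Y_\alpha|})$, and since $|\Y_\alpha| \leq \nY$ by definition, each is bounded by $\Oh(\Multlog{d_Y + \nY})$; summing over the $\nX$ values of $\alpha$ yields $\Oh(\nX \Multlog{d_Y + \nY})$. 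Adding the two dominant contributions gives the stated complexity, and the soft-$\Oh$ expression follows from $\Mult{m} \in \sOh(m)$.

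There is no real obstacle here; the algorithm is essentially Pan's reduction, and the only thing to check carefully is that $|\Y_\alpha| \le \nY$ is enough to bound the second sum without needing a stronger structural assumption on $\pts$ (in particular, we are \emph{not} yet using the semi-grid hypothesis, which is why the bound features $\nX \nY$ rather than $n$). The semi-grid refinement will appear later when one substitutes $\nX \nY = n$, as foreshadowed in the introduction.
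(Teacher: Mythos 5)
Your proof is correct and follows essentially the same route as the paper: both verify $g_\alpha(\beta) = f(\alpha,\beta)$ via the evaluated coefficients $\vals_i[\alpha]=f_i(\alpha)$, and both account for the $d_Y+1$ calls to $\UniMPE$ on $\X$ and the $\nX$ calls on the $\Y_\alpha$, using $\abs{\Y_\alpha} \leq \nY$ for the second bound. The only cosmetic difference is that you isolate the identity $g_\alpha(Y) = f(\alpha,Y)$ before specializing at $\beta$, whereas the paper inlines it as one chain of equalities.
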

\begin{proof}
 Correctness follows from the fact that
\begin{align*}
\vals[\alpha,\beta] = \mathcal{G}_{\alpha}[\beta] = g_{\alpha}(\beta) = \sum_{i=0}^{d_Y} \vals_{i}[\alpha] \beta^i = \sum_{i=0}^{d_Y} f_i(\alpha) \beta^i = f(\alpha,\beta) \ .
\end{align*}

For the complexity, \cref{line:bi-eval:1,line:bi-eval:2} both have cost $\Oh(n)$.
\cref{prop:uni-mpe} implies that computing $\vals_i$ costs
\begin{align*}
\Oh(\Multlog{\deg f_i + \nX }) \for i = 1,\hdots,d_Y \ ,
\end{align*}
thus the total cost for \cref{line:bi-eval:3} becomes
\begin{align*}
\Oh(d_Y\Multlog{d_X + \nX }).
\end{align*}
\cref{line:bi-eval:4} costs no operations in $\F$.
From \cref{prop:uni-mpe} it follows that computing each $\mathcal{G}_{\alpha}$ costs
\begin{align*}
  \Oh(\Multlog{d_Y + \abs{\Y_{\alpha}}}) \for \alpha \in \X,
\end{align*}
thus the total cost of \cref{line:bi-eval:5} becomes
\begin{align*}
\Oh(\nX \Multlog{d_Y + \nY}) \ .
\end{align*}
\cref{line:bi-eval:6} costs no operations in $\F$, and so the total cost of computing $\vals$ becomes as in the theorem.
\end{proof}

\begin{remark}
  \label{rem:mpe_semi-grid}
  If $\pts \in \F^2$ is a semi-grid, and $f \in \F[X,Y]$ is a dense polynomial satisfying either $n_X \in \Oh(\deg_X f)$ or $\deg_Y(f) \in \Oh(\nY)$, then \cref{algo:bi-eval} has quasi-linear complexity in the input size $|\pts| + \deg_X f \deg_Y f$.
\end{remark}

\begin{remark}
  Even if we use classical polynomial multiplication, with $\Mult n  = \Oh(n^2)$, and if we assume $d_X \in \Theta(n_X)$ and $d_Y \in \Theta(\nY)$, then the cost of \cref{algo:bi-eval} is $\sOh(\nY^2 n_X + \nY n_X^2)$ which for most point sets is better than the naive approach of point-by-point evaluation costing $\Oh(\nY n_X n)$.
\end{remark}

\subsection{Fast encoding}
\label{ssec:fast-encoding}

\cref{algo:bi-eval} gives rise a fast encoder; details can be found in \cref{algo:Enc}.

\begin{algorithm}
  \caption{\Enc}\label{algo:Enc}
  \KwInput{
    A \Cab code $\agcode_\Hpol(\pts,m) \subseteq \F^n$ of dimension $k$, with $\pts = \{ P_1, \hdots, P_n \}$ being finite rational places.
    Message $\vec m \in \F^k$.
  }
  \KwOutput{Codeword $\vec{c} = \phi(\vec m)$, where $\phi: \F^k \rightarrow \agcode_{\Hpol}(\pts,m)$ is the encoding map defined in \cref{ssec:encoding-map}.}
  $f \assign \varphi(\vec m) \subset \F[x,y]$ in standard form, where $\varphi$ is as in \cref{ssec:encoding-map} \;
  $\vals \assign \BiMPE(f, \pts) \in \F^{\pts}$, where $f$ is lifted to $\F[X,Y]$ \label{line:enc:1} \;
  \Return $(\vals[P_1], \hdots, \vals[P_n]) \in \F^n$
\end{algorithm}

\begin{theorem} \label{thm:Enc}
  \cref{algo:Enc} is correct.
  It uses at most $\Oh(\Multlog{m + a\nX }) \subset \sOh(m + a\nX)$ operations in $\F$, where $\nX = \nX(\pts)$.
\end{theorem}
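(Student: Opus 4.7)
The plan is to verify correctness by a direct trace of the algorithm, and then derive the complexity bound by applying \cref{thm:bi-eval} with sharp estimates on the degrees of the message polynomial and on the fibre size $\nY$.

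For correctness: by construction in \cref{ssec:encoding-map}, $f = \varphi(\vec m)$ lies in $\riroch(m P_\infty)$ and is written in standard form, so its natural lift to $\F[X,Y]$ satisfies $\deg_Y f < a$ and has $(a,b)$-weighted degree at most $m$. Since each $P_i = (\alpha_i,\beta_i) \in \pts$ is a finite rational place with $\Hpol(\alpha_i,\beta_i) = 0$, the polynomial value $f(\alpha_i,\beta_i)$ in $\F[X,Y]$ coincides with the function value $f(P_i)$ in the function field. Hence the table $\vals$ returned by $\BiMPE$, which is correct by \cref{thm:bi-eval}, satisfies $\vals[P_i] = f(P_i)$, and the returned vector is exactly $\ev_\pts(f) = \phi(\vec m)$.

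For the complexity, I would feed \cref{thm:bi-eval} the following three bounds: $d_Y := \deg_Y f \leq a-1$ because $f$ is in standard form; $d_X := \deg_X f \leq m/a$, since $ai + bj \leq m$ forces $i \leq m/a$ for every monomial $X^i Y^j$ in the support of $f$; and $\nY(\pts) \leq a$, since for each fixed $\alpha \in \F$ the polynomial $\Hpol(\alpha, Y)$ has $Y$-degree $a$ and hence at most $a$ roots. Substituting these bounds into the complexity expression of \cref{thm:bi-eval} yields a cost of $\Oh\bigl(a\,\Multlog{m/a + \nX} + \nX\,\Multlog{2a}\bigr)$.

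The remaining, and most substantive, step is collapsing the two summands into the single expression $\Oh(\Multlog{m + a\nX})$. Here I would invoke the standard superlinearity of $\Mult$, namely that $\Mult{n}/n$ is nondecreasing, so that $c\,\Mult{n} \leq \Mult{cn}$ for every positive integer $c$. Applied to the first summand this gives $a\,\Mult{m/a + \nX} \leq \Mult{m + a\nX}$; applied to the second it gives $\nX\,\Mult{2a} \leq \Mult{2a\nX} \in \Oh(\Mult{m + a\nX})$. Both logarithmic factors are bounded by $\log(m + a\nX)$ up to constants, so the two terms combine into the claimed bound. I do not anticipate any serious obstacle; the only mildly delicate point is recognising that the superlinearity property is exactly the identity needed to convert the linear coefficients $a$ and $\nX$ into an absorption into the argument of $\Mult$.
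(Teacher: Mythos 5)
Your proof is correct and follows essentially the same route as the paper's: both establish $\deg_Y f < a$ from the standard form, $\deg_X f \leq m/a$ (the paper phrases this as $a\,d_X \leq m$, so $d_X d_Y < m$), and $\nY \leq a$ from the $Y$-degree of $\Hpol$, then plug into \cref{thm:bi-eval}. The only difference is one of exposition: the paper asserts the final inclusion $\Oh(d_Y\Multlog{d_X + \nX} + \nX\Multlog{d_Y + \nY}) \subset \Oh(\Multlog{m + a\nX})$ without comment, whereas you make the superlinearity of $\Mult$ explicit, which is exactly the right justification.
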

\begin{proof}
  Correctness follows trivially from \cref{thm:bi-eval}.
  For complexity let $\dX = \deg_Xf$, $\dY = \deg_Yf$ and $\nY = \nY(\pts)$.
  Since $f$ is in standard form we have that $\dY < a$.
  Furthermore, since $f \in \riroch(m P_{\infty})$ we know that $a \dX + b \dY \leq m$. It follows that $\dX \dY < \dX a \leq m$.
  Since for each value of $X$ in $\X(\pts)$, there can be at most $a$ solutions in $Y$ to the \Cab curve equation, we know $\nY < a$.
  It follows from \cref{thm:bi-eval} that the cost of \cref{line:enc:1} is
  \begin{align*}
    &\Oh(d_Y\Multlog{d_X + \nX } + \nX \Multlog{d_Y + \nY})\\
    &\subset \Oh(\Multlog{m + a\nX }) \ .
  \end{align*}
\end{proof}

As can be seen, the complexity of this algorithm depends on parameters of the \Cab curve compared to the code length as well as the layout of the evaluation points: more specifically on how $an_X$ compares with the code length $n$.
In \cref{ssec:fast-unencoding} we will revisit the complexity for codes over \Cab curves that lie on semi-grids as well as \Cab curves which have many points.
In the worst case, the following corollary bounds the complexity in terms of the length of the code under very mild assumptions on the \Cab curve.
Note that this cost is still much better than encoding using a matrix-vector product in $\Oh(n^2)$ time.

\begin{corollary}
  \label{cor:enc_any_cab_code}
  In the context of \cref{algo:Enc}, let $q$ be the cardinality of $\F$ and assume $n \geq q$.
  Assume further that the genus $g$ of the \Cab curve satisfies $g \leq n$.
  Then the complexity of \cref{algo:Enc} is $\sOh(q\sqrt n) \subset \sOh(n^{3/2})$.
\end{corollary}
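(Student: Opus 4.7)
The plan is to bound each of the two terms $m$ and $an_X$ appearing in the complexity estimate of \cref{thm:Enc} by $\Oh(q\sqrt{n})$, using the hypotheses $n \geq q$, $g \leq n$ and the universal inequality $n \leq q^2$ noted at the end of \cref{ssec:cab_codes}.

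First, I would control the parameter $a$. Recall from \cref{prop:c-ab} that $g = \tfrac{1}{2}(a-1)(b-1)$, and by convention $a \leq b$. Combined with $g \leq n$ this yields $(a-1)^2 \leq (a-1)(b-1) = 2g \leq 2n$, hence $a \in \Oh(\sqrt{n})$. Since moreover $n_X(\pts) \leq |\F| = q$, we immediately obtain
\begin{equation*}
  a \cdot n_X \;\in\; \Oh\bigl(q\sqrt{n}\bigr).
\end{equation*}

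Next I would bound $m$. As discussed just after \cref{def:cab-codes}, we may assume $m \leq n + 2g - 1$, so the hypothesis $g \leq n$ gives $m \in \Oh(n)$. To compare this with $q\sqrt{n}$, I would use the obvious bound $n \leq q^2$, which is equivalent to $\sqrt{n} \leq q$; multiplying both sides by $\sqrt{n}$ yields $n \leq q\sqrt{n}$, and hence $m \in \Oh(q\sqrt{n})$ as well.

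Combining these two estimates with \cref{thm:Enc}, the total cost is
\begin{equation*}
  \sOh(m + a n_X) \;\subseteq\; \sOh\bigl(q\sqrt{n}\bigr).
\end{equation*}
Finally, the containment $\sOh(q\sqrt{n}) \subseteq \sOh(n^{3/2})$ follows from the hypothesis $n \geq q$, since then $q\sqrt{n} \leq n \sqrt{n} = n^{3/2}$. No step here is really a bottleneck; the only non-routine observation is that the genus hypothesis directly controls $a$ via coprimality of $a,b$ and the formula $g=(a-1)(b-1)/2$, and that the geometric constraint $n \leq q^2$ is what makes the linear term $m$ fit within the $q\sqrt{n}$ budget.
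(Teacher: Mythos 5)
Your proof is correct and follows essentially the same route as the paper's: bound $n_X$ by $q$, bound $a$ via the genus formula $g = \tfrac12(a-1)(b-1)$ and the assumption $g \leq n$, and bound $m$ by $\Oh(n)$. You make explicit two small steps the paper leaves implicit (namely that $n \leq q^2$ gives $n \leq q\sqrt n$, and that $n \geq q$ gives the containment in $\sOh(n^{3/2})$), but the substance is identical.
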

\begin{proof}
  There can at most be $q$ different $X$-coordinates in $\pts$, so $n_X \leq q$.
  Assuming w.l.o.g that $a < b$ we get $n \geq g = \tfrac 1 2 (a-1)(b-1) \geq \tfrac 1 2 (a-1)^2$, and hence $a \leq \sqrt{2n} + 1 \in \Oh(\sqrt n)$.
  Lastly, $m \leq n + 2g - 1 \in \Oh(n)$.
  The result follows from \cref{thm:Enc}.
\end{proof}

\section{A fast unencoding algorithm}
\label{sec:fast-unencoding}

We now consider the problem of \emph{unencoding}: we are given a codeword $\vec c \in \agcode_{\Hpol}(\pts,m)$ and we wish to find the message $\vec m = \phi^{-1}(\vec c) \in \F^k$, where $\phi$ is the encoding map defined in \cref{ssec:encoding-map}.
Following the discussion there, we factor $\phi$ as $\phi = \ev_{\pts} \circ \varphi$, where $\varphi: \F^k \mapsto \riroch(m P_\infty)$ which is a linear map that sends unit vectors to monomials in
\[
  \hat B \subseteq B := \{ x^i y^j \mid \deg_{a,b}(x^iy^j) \leq m \land j \leq a-1 \} \ .
\]
Recall that the evaluation map is injective on $\riroch(mP_\infty)$ whenever $m < n$, so in this case $\hat B = B$.
Otherwise, $\hat B$ is chosen such that if $g \in \ker(\ev_\pts)$, then $\LM_{\orderab}(g) \notin \hat B$, where $\orderab$ is the $(a,b)$-weighted degree monomial order, breaking ties by $x^b \orderab y^a$.

In \cref{ssec:fast-interp} we will first rephrase the above as a general interpolation problem for bivariate polynomials, and not use the fact that the $\pts$ are a subset of a low-degree \Cab curve.
In \cref{ssec:unencoding} we specialise and analyse the complexity of using this approach for the unencoding problem.

\subsection{Interpolation of Bivariate Polynomials}
\label{ssec:fast-interp}

In the following subsections, we suppose that we are given a set of $n$ points $\pts \subseteq \F^2$, not necessarily lying on a \Cab curve, and a corresponding collection of values $\vals \in \F^{\pts}$.
The \emph{interpolation problem} consists of finding a polynomial $f \in \F[X,Y]$ such that $f(\alpha, \beta) = \vals[\alpha, \beta]$ for all $(\alpha,\beta) \in \pts$.
Since there are many such polynomials, one usually imposes constraints on the set of monomials $X^i Y^j$ that may appear with non-zero coefficient in $f$.
For us, the relevant monomial support is the set $\hat B$ described above.

Depending on $\pts$ and the allowed monomial support, not all interpolation conditions can be satisified, but whenever they can, the solution could be found in time $O(n^2)$ using linear algebra, by precomputing a basis for the inverse of the evaluation map.

This section details a faster approach.
In \cref{ssec:fast-interp} we first use an efficient recursive algorithm to find an $\hat{f} \in \F[X,Y]$ such that $\hat{f}(x(P_i),y(P_i)) = c_i$ for $i = 1,\hdots,n$ while $\deg_Y \hat f < a$.
In general, $\hat f(x,y)$ is not the sought $f$ since its monomial support will be too large.
However, $f \in \hat f + \VanIdeal$, where $\VanIdeal$ is the ideal of polynomials in $\F[X,Y]$ vanishing at the points $P_1,\ldots,P_n$.
By the choice of $\hat B$, we will see that we can recover $f$ by reducing $\hat f$ modulo a suitable \Grobner basis $G$ of $\VanIdeal$ using a fast multivariate division algorithm.

\subsubsection{Finding a structured interpolation polynomial}
\label{sec:find-small-interp}

We will use the following explicit equation which finds an interpolating polynomial $\hat f \in \F[X,Y]$ with $\deg_Y \hat f < \nY(\pts)$ and $\deg_X \hat f < \nX(\pts)$.
Our algorithm for efficiently computing this polynomial essentially generalizes Pan's interpolation algorithm \cite{pan_simple_1994}, which was designed to work for points on grids, to work for points on semi-grids.

\begin{lemma}
  \label{lem:explicit_interpolation}
  Given a point set $\pts \in \F^2$ and interpolation values $\vals \in \F^\pts$, then $\hat f \in \F[X,Y]$ given by
  \begin{align}
    \label{eqn:explicit_interpolation}
    \hat{f} = \sum_{\alpha \in \X} \prod_{\alpha' \in \X \setminus \set{\alpha}} \frac{X-\alpha'}{\alpha - \alpha'}\sum_{\beta \in \Y_{\alpha}}\vals[\alpha,\beta]\prod_{\beta' \in \Y_{\alpha} \setminus \set{\beta}}\frac{Y-\beta'}{\beta - \beta'} \ ,
  \end{align}
  satisfies $f(\alpha,\beta) = \vals[\alpha,\beta]$ for all $(\alpha,\beta) \in \pts$.
\end{lemma}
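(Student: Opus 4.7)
The plan is to verify the interpolation condition by direct evaluation, relying only on the classical univariate Lagrange basis property applied once in each coordinate. First I would fix an arbitrary point $(\alpha_0, \beta_0) \in \pts$; by definition of $\X$ and $\Y_{\alpha_0}$ we then have $\alpha_0 \in \X$ and $\beta_0 \in \Y_{\alpha_0}$, so in particular $\Y_{\alpha_0}$ is nonempty and all denominators appearing in the expression \eqref{eqn:explicit_interpolation} are well-defined.

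Next I would evaluate $\hat{f}(\alpha_0,\beta_0)$ by dealing with the outer and inner sums in turn. The outer Lagrange factor $\prod_{\alpha' \in \X \setminus \set{\alpha}} \frac{\alpha_0 - \alpha'}{\alpha - \alpha'}$ equals $1$ when $\alpha = \alpha_0$, and equals $0$ for every $\alpha \in \X \setminus \set{\alpha_0}$ because the factor with $\alpha' = \alpha_0$ in the numerator vanishes. This collapses the outer sum to its $\alpha = \alpha_0$ contribution. Applying exactly the same argument to the inner factor $\prod_{\beta' \in \Y_{\alpha_0} \setminus \set{\beta}} \frac{\beta_0 - \beta'}{\beta - \beta'}$ (now ranging $\beta$ over $\Y_{\alpha_0}$) shows that it equals $1$ when $\beta = \beta_0$ and $0$ otherwise, so the inner sum collapses to the single term $\vals[\alpha_0,\beta_0]$, as required.

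I do not foresee any genuine obstacle: the formula is simply the tensor form of Lagrange interpolation adapted to the semi-grid case. Conceptually, for each fixed $\alpha \in \X$ the inner sum is the univariate Lagrange interpolant in $Y$ of the data $\set{(\beta,\vals[\alpha,\beta]) : \beta \in \Y_\alpha}$, and the outer sum glues these ``column interpolants'' together by a univariate Lagrange interpolation in $X$ across $\X$. The only point that must be checked explicitly is that the membership $(\alpha_0,\beta_0)\in\pts$ guarantees both $\alpha_0\in\X$ and $\beta_0\in\Y_{\alpha_0}$, which is what makes the double-application of the Lagrange property work. No use of the semi-grid hypothesis is needed for this lemma; that structural assumption only enters later when one wants to compute $\hat f$ efficiently.
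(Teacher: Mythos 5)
Your proof is correct and follows essentially the same direct-evaluation argument as the paper: fix a point $(\alpha_0,\beta_0) \in \pts$, observe that the outer Lagrange factor kills all terms with $\alpha \neq \alpha_0$, and the inner one kills all terms with $\beta \neq \beta_0$, leaving exactly $\vals[\alpha_0,\beta_0]$. The additional remarks you make (that $(\alpha_0,\beta_0)\in\pts$ automatically places $\alpha_0\in\X$ and $\beta_0\in\Y_{\alpha_0}$, and that no semi-grid assumption is needed) are accurate and just slightly more explicit than the paper's terse phrasing.
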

\begin{proof}
  Let $(\alpha,\beta) \in \pts$. Then the only nonzero term in the first sum is the one corresponding to $\alpha$, while the only nonzero term in the second sum is the one corresponding to $\beta$, thus
  
 \[
   \hat{f}(\alpha,\beta) = \prod_{\alpha' \in \X \setminus \set{\alpha}}\frac{\alpha - \alpha'}{\alpha - \alpha'} \vals[\alpha,\beta] \prod_{\beta' \in \Y_{\alpha} \setminus \set{\beta}} \frac{\beta - \beta'}{\beta - \beta'} = \vals[\alpha,\beta] \ .
 \]
\end{proof}

Our strategy to compute $\hat{f}$ in an efficient manner can be viewed in the following way: we start by computing the polynomials $\hat{f}_{\alpha} := \hat{f}(\alpha,Y) \in \F[Y]$ for every $\alpha \in \X$ using univariate interpolation.
We then reinterpret our interpolation problem as being univariate over $(\F[Y])[X]$, i.e.~we seek $\hat f(X)$ having coefficients in $\F[Y]$ and such that $\hat f(\alpha) = \hat f_\alpha$.
However, to be more clear, and for a slightly better complexity (on the level of logarithms), we make this latter interpolation explicit.

Before we put these steps together to compute $\hat f$ in \cref{algo:BiInterp}, we therefore first consider the following sub-problem:
Given any subset $\Cset \subseteq \F$ and a table of univariate polynomials $\mathcal V \in \F[Y]^{\Cset}$ indexed by $\Cset$, compute the following bivariate polynomial:
\begin{equation}
  \label{eqn:combine_poly}
  h(X,Y) = \sum_{\alpha \in \Cset} \mathcal V[\alpha] \prod_{\alpha' \in \Cset\setminus \set{\alpha}}(X - \alpha') \in \F[X,Y] \ .
\end{equation}
For the $(\F[Y])[X]$ interpolation, we will follow an approach of univariate interpolation closely mimicking that of~\cite[Chapter 10.2]{von_zur_gathen_modern_2012}.
Firstly, we arrange the $X$-coordinates of the interpolation points in a balanced tree:
\begin{definition}
  Let $\Cset \subseteq \F$.
  A balanced partition tree of $\Cset$ is a binary tree which has subsets of $\Cset$ as nodes, and satisfies the following:
  \begin{enumerate}
    \item $\Cset$ is the root node.
    \item A leaf node is a singleton set $\{ \alpha \} \subseteq \Cset$.
    \item An internal node $\mathcal{N}$ is the disjoint union of its two children $\mathcal{N}_1,\mathcal{N}_2$ and they satisfy $\abs{\abs{\mathcal{N}_1} - \abs{{\mathcal{N}_2}}} \leq 1$.
  \end{enumerate}
  If $\tree$ is a balanced partition tree and $\mathcal{N} \subseteq \Cset$, we will write $\mathcal{N} \in \tree$ if $\mathcal{N}$ is a node of $\tree$, and denote by $\tree[\mathcal{N}]$ the set of its two child nodes.
\end{definition}

\begin{lemma}[\protect{Lemma 10.4}{ \cite{von_zur_gathen_modern_2012}}] \label{lemma:TreeVanish}
  There exists an algorithm $\TreeVanish_X$ which inputs a balanced partition tree $\tree$ of some $\Cset \subseteq \F$ and outputs the lookup table
  \begin{align*}
    \TreeVanish_X(\tree) := \big(\prod_{\alpha \in \mathcal{N}}(X - \alpha)\big)_{\mathcal{N} \in \tree} \in \F[X]^{\tree} .
\end{align*}
  The algorithm uses at most $\Oh(\Mult{k}\log(k)) \subset \sOh(k)$ operations in $\F$, where $k = |\Cset|$.
\end{lemma}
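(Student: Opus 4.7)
My plan is to build the lookup table bottom-up, exactly as for a standard subproduct tree. For every leaf $\{\alpha\}\in\tree$ we store the polynomial $X-\alpha$, which costs nothing. Then for every internal node $\mathcal{N}\in\tree$ with $\tree[\mathcal{N}]=\{\mathcal{N}_1,\mathcal{N}_2\}$, once the table entries for $\mathcal{N}_1$ and $\mathcal{N}_2$ have been filled, we compute the entry for $\mathcal{N}$ as the product of the two child polynomials using fast univariate multiplication. Correctness is immediate by induction on the height of the node: if the children store $\prod_{\alpha\in\mathcal{N}_i}(X-\alpha)$, then multiplying them yields $\prod_{\alpha\in\mathcal{N}}(X-\alpha)$ because $\mathcal{N}$ is the disjoint union of $\mathcal{N}_1$ and $\mathcal{N}_2$.

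For the complexity, I would group the work by level of $\tree$. Since $\tree$ is balanced, if we write $\ell=\lceil\log_2 k\rceil$ for its height, a node at distance $d$ from the root has cardinality at most $\lceil k/2^{d}\rceil$, so its stored polynomial has degree at most that quantity. At level $d$ there are at most $2^{d}$ nodes, and each product at that level multiplies two polynomials of degree at most $\lceil k/2^{d+1}\rceil$, so it costs $\Oh(\Mult{k/2^{d}})$ operations. The total cost at level $d$ is therefore
\[
  \Oh\!\bigl(2^{d}\,\Mult{k/2^{d}}\bigr) \;\subseteq\; \Oh(\Mult{k}),
\]
where the last containment uses the standard super-additivity of $\Mult$, i.e.\ $2\Mult{n/2}\leq \Mult{n}$, applied $d$ times. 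Summing over the $\Oh(\log k)$ levels yields the claimed $\Oh(\Mult{k}\log k)\subseteq\sOh(k)$ bound.

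The only mild subtlety is that the balanced partition tree need not split each node into exactly equal halves (the definition only requires $\bigl||\mathcal{N}_1|-|\mathcal{N}_2|\bigr|\leq 1$), so one must check that the level-by-level degree bound $\lceil k/2^{d}\rceil$ indeed holds; this follows by a straightforward induction on $d$ using the balance condition. Apart from that bookkeeping, the argument is routine, and there is no real obstacle.
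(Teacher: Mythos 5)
The paper does not prove this lemma; it is cited verbatim from von zur Gathen and Gerhard's \emph{Modern Computer Algebra} (Lemma 10.4), where the standard subproduct-tree construction and level-by-level cost analysis is exactly what appears. Your proof reproduces that argument correctly, including the mild bookkeeping point that the balance condition $\bigl||\mathcal{N}_1|-|\mathcal{N}_2|\bigr|\leq 1$ still gives the depth-$d$ size bound $\lceil k/2^d\rceil$ (via $\lceil\lceil k/2^d\rceil/2\rceil=\lceil k/2^{d+1}\rceil$), so there is nothing to fault.
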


\begin{algorithm}
  \caption{$\LinMod$: $\F[Y]$-linear combination of vanishing polynomials} \label{algo:Combine}
  \KwInput{Points $\Cset \subseteq \F$, non-empty. \;
    Lookup table $\mathcal{V} \in \F[Y]^{\Cset}$.\;
    A balanced partition tree $\tree$ with $\Cset \in \tree$.\;
    $\VanTree = \TreeVanish_X(\tree) \in \F[X]^{\tree}$.\;
  }
  \KwOutput{$h(X,Y) \in \F[X,Y]$ as in \eqref{eqn:combine_poly}.
  }
  \If{$S = \set{\alpha}$}
  {
    \Return $\mathcal{V}[\alpha] \in \F[Y]$
  }
  \Else{
    $\set{\Cset_1,\Cset_2} \assign \tree[\Cset]$ \;
    \For{$k = 1,2$}{
      $\mathcal{V}_k \assign (\mathcal{V}[\alpha])_{\alpha \in \Cset_k} \in \F[Y]^{\Cset_k}$ \;
      $\hat{f}_k \assign \LinMod(\Cset_k, \mathcal{V}_k,\tree, \mathcal{U}) \in \F[X,Y]$
    }
    \Return $\hat{f}_1\VanTree[\Cset_2] + \hat{f}_2\VanTree[\Cset_1] \in \F[X,Y]$
  }
\end{algorithm}

With these tools in hand, \cref{algo:Combine} is an algorithm for computing $h(X,Y)$ as in \eqref{eqn:combine_poly}.
\begin{theorem}
  \label{thm:Combine}
  \cref{algo:Combine} is correct.
  If $\deg_Y \mathcal{V[\alpha]} < d$ for all $\alpha \in \Cset$, then the algorithm has complexity $\Oh(d \Multlog{k}) \subset \sOh(dk)$ operations in $\F$, where $k = \abs{\Cset}$.
\end{theorem}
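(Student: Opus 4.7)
The plan is to prove correctness by induction on $|\Cset|$, then derive the complexity from a standard divide-and-conquer recurrence whose combining cost is the only nontrivial thing to bound.

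For correctness, the base case $\Cset = \{\alpha\}$ holds because the algorithm returns $\mathcal{V}[\alpha]$, which agrees with \eqref{eqn:combine_poly} since the outer sum has one term and the inner product is empty. For the recursive step, suppose inductively that for $k \in \{1,2\}$,
\[
  \hat{f}_k = \sum_{\alpha \in \Cset_k} \mathcal{V}[\alpha] \prod_{\alpha' \in \Cset_k \setminus \{\alpha\}} (X - \alpha') \ .
\]
By \cref{lemma:TreeVanish}, $\VanTree[\Cset_2] = \prod_{\alpha' \in \Cset_2}(X-\alpha')$. Since $\Cset_1, \Cset_2$ are disjoint with union $\Cset$, for any $\alpha \in \Cset_1$ we have $\alpha \notin \Cset_2$, so
\[
  \hat{f}_1 \VanTree[\Cset_2] = \sum_{\alpha \in \Cset_1} \mathcal{V}[\alpha] \prod_{\alpha' \in \Cset \setminus \{\alpha\}}(X-\alpha') \ .
\]
The symmetric identity for $\hat{f}_2 \VanTree[\Cset_1]$ combined with the previous display gives exactly $h(X,Y)$ as in \eqref{eqn:combine_poly}.

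For the complexity, I would first observe that the bound $\deg_Y \hat{f}_k < d$ is preserved by induction: the base case inherits it from $\mathcal{V}$, and the recursive step multiplies only by polynomials in $\F[X]$. Let $T(k)$ denote the cost on a set of size $k$ under this assumption. The recursion yields
\[
  T(k) \leq T(\lceil k/2 \rceil) + T(\lfloor k/2 \rfloor) + C(k) \ ,
\]
where $C(k)$ is the cost of computing the two bivariate products and the final sum. The crux of the proof is bounding $C(k)$: writing $\hat{f}_1 = \sum_{\ell=0}^{d-1} a_\ell(X) Y^\ell$ with each $a_\ell \in \F[X]$ of degree at most $|\Cset_1| - 1 \leq \lceil k/2 \rceil$, one computes
\[
  \hat{f}_1 \VanTree[\Cset_2] = \sum_{\ell=0}^{d-1} \bigl(a_\ell \cdot \VanTree[\Cset_2]\bigr) Y^\ell
\]
via $d$ independent univariate multiplications in $\F[X]$ of polynomials of degree at most $k$, costing $\Oh(\Mult{k})$ each. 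Summing over $\ell$ and accounting for the symmetric product plus the final addition gives $C(k) \in \Oh(d \Mult{k})$.

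Since $\Mult{k}/k$ is non-decreasing, the recurrence $T(k) \leq 2T(k/2) + \Oh(d\Mult{k})$ unfolds to $T(k) \in \Oh(d \Mult{k} \log k) = \Oh(d \Multlog{k}) \subset \sOh(dk)$, as claimed. I expect the only subtle point to be isolating the $Y$-coefficient-wise structure of the product, which reduces the multiplication to univariate work; once that observation is made, everything else is routine bookkeeping and a standard master-theorem unfolding.
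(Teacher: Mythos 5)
Your proof is correct and follows essentially the same route as the paper's: induction on $|\Cset|$ for correctness, decomposition of the bivariate product into $d$ univariate multiplications in $\F[X]$ of degree at most $k$ for the combining cost, and the standard divide-and-conquer recurrence $T(k) \leq 2T(k/2) + \Oh(d\Mult{k})$ solved to $\Oh(d\Multlog{k})$. The only difference is presentational: you spell out the $Y$-coefficient-wise decomposition $\hat{f}_1 = \sum_\ell a_\ell(X)Y^\ell$ explicitly and note the superadditivity-type hypothesis on $\MultM$ needed to unfold the recurrence, both of which the paper leaves implicit.
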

\begin{proof}
  We prove correctness by induction on $\abs{\Cset}$.
  The base case of $\Cset = \{ \alpha \}$ is trivial.
  For the induction step, the algorithm proceeds into the else-branch and we note that $|\Cset_1|, |\Cset_2| < |\Cset|$ and so the induction hypothesis applies to $\hat f_k$ for $k = 1,2$:
  \[
    \hat f_k
    = \sum_{\alpha \in \Cset_k} \mathcal{V}_k[\alpha] \prod_{\alpha' \in \Cset_k \setminus \set{\alpha}}(X-\alpha') \ .
  \]
  Note that $\mathcal V_k[\alpha] = \mathcal V[\alpha]$ and $\Cset_1 \cup \Cset_2 = \Cset$, so we conclude that the polynomial returned by the algorithm is
  \begin{align*}
    &\hat{f}_1\VanTree[S_2] + \hat{f}_2\VanTree[S_1]  \\
    &= \sum_{\alpha \in \Cset_1} \mathcal{V}[\alpha] \prod_{\alpha' \in \Cset \setminus \set{\alpha}}(X-\alpha') + \sum_{\alpha \in \Cset_2} \mathcal{V}[\alpha] \prod_{\alpha' \in \Cset \setminus \set{\alpha}}(X-\alpha') \\
    &= \sum_{\alpha \in \Cset} \mathcal{V}[\alpha] \prod_{\alpha' \in \Cset \setminus \set{\alpha}}(X-\alpha') \ .
  \end{align*}
  For complexity let $T(k)$ denote the cost of \LinMod for $\abs{\Cset} = k$.
  At a recursive step we solve two subproblems of size at most $\lceil k/2 \rceil$; and we compute the expression $\hat{f}_1 \VanTree[\Cset_2] + \hat{f}_2 \VanTree[\Cset_1]$.
  Since $\VanTree[\Cset_k] \in \F[X]$ then each of the two products can be carried out using $\deg_Y \hat f_k + 1 \leq d$ multiplications in $\F[X]_{\leq k}$ and hence cost $\Oh(d\Mult{k})$ each.
  The addition $\hat{f}_1 \VanTree[\Cset_2] + \hat{f}_2 \VanTree[\Cset_1]$ costs a further $\Oh(d k)$.
  In total, we get the following recurrence relation:
  \begin{align*}
    T(k) &= 2T(k/2) + \Oh(d\Mult{k}) \ .
  \end{align*}
  This has the solution $T(k) = \Oh(d \Multlog{k}) + \Oh(k)T(1)$. $T(1)$ is the base case of the algorithm, and costs $\Oh(d)$.
\end{proof}

We are now in position to assemble the steps outlined for computing the interpolation polynomial given by \cref{lem:explicit_interpolation} following the steps outlined above by supplying \cref{algo:Combine} with the correct input; this is described in \cref{algo:BiInterp}.

Note that \cref{algo:BiInterp} makes use of the subroutines $\UniInterp_Y$ and $\UniMPE$ that were introduced in \cref{prop:uni-mpe} and \cref{prop:uni-interp}.

\begin{algorithm}
  \caption{$\BiInterp$: Bivariate interpolation} \label{algo:BiInterp}
  \KwInput{Points $\pts \subseteq \F^2$, non-empty. Lookup table of interpolation values $\vals \in \F^{\pts}$.\;
}
\KwOutput{The polynomial $\hat f \in \F[X,Y]$ given by \cref{lem:explicit_interpolation}.}
$\X \assign \X(\pts) \subseteq \F$ \;
\ForEach{$\alpha \in \X$ \label{BiInterp:for}}
{
  $\Y_{\alpha} \assign \Y_{\alpha}(\pts) \subseteq \F$ \;
  $\vals_{\alpha} \assign (\vals[\alpha,\beta])_{\beta \in \Y_{\alpha}} \in \F^{\Y_{\alpha}}$\;
 $\hat{f}_{\alpha} \assign \UniInterp_Y(\Y_{\alpha}, \vals_{\alpha}) \in \F[Y]$ \label{BiInterp:UniInterp}\;
}
$\tree \assign $ a balanced partition tree of $\X$ \label{BiInterp:tree}\;
$\VanTree \assign \TreeVanish_X(\tree) \in \F[X]^{\tree}$ \label{BiInterp:vanish}\;
$g \assign $ formal derivative of $\VanTree[\X] \in \F[X]$ \label{BiInterp:derivative}\;
$\mathcal{R} \assign \UniMPE(g, \X) \in \F^{\X}$ \label{BiInterp:UniMPE}\;
$\mathcal{V} \assign (\hat{f}_{\alpha}/ \mathcal{R}[\alpha])_{\alpha \in \X} \in \F[Y]^{\X}$ \label{BiInterp:scaling}\;
\Return $\LinMod(\X, \mathcal{V}, \tree, \VanTree)$ \label{BiInterp:LinMod}
\end{algorithm}

\begin{theorem} \label{thm:BiInterp}
  \cref{algo:BiInterp} is correct.
  It has complexity
  \[
    \Oh(\nY \Multlog{\nX } + \nX  \Multlog{\nY}) \subseteq \sOh(\nX \nY)
  \]
  operations in $\F$, where $\nX = \nX(\pts)$ and $\nY = \nY(\pts)$.
\end{theorem}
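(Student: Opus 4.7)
The plan is to show correctness by verifying that the algorithm assembles exactly the closed form of \cref{lem:explicit_interpolation}, and then to account for the cost of each line using the tools from \cref{ssec:comp-tools} and \cref{thm:Combine}.

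For correctness, I would first observe that \cref{BiInterp:UniInterp}, together with \cref{prop:uni-interp}, yields
\[
  \hat f_\alpha(Y) \;=\; \sum_{\beta \in \Y_\alpha} \vals[\alpha,\beta] \prod_{\beta' \in \Y_\alpha \setminus \set{\beta}} \frac{Y - \beta'}{\beta - \beta'} \;\in\; \F[Y],
\]
so the inner (Lagrange-in-$Y$) sum of \eqref{eqn:explicit_interpolation} is already computed. What remains is to form the outer Lagrange combination over $\alpha \in \X$, viewed in the univariate ring $(\F[Y])[X]$. The classical barycentric identity says that if $g$ is the formal derivative of $\prod_{\alpha'' \in \X}(X - \alpha'') = \VanTree[\X]$, then $g(\alpha) = \prod_{\alpha' \in \X \setminus \set{\alpha}}(\alpha - \alpha')$ for every $\alpha \in \X$, so \cref{BiInterp:derivative,BiInterp:UniMPE} produce $\mathcal R[\alpha]$ equal to the required Lagrange denominator at $\alpha$. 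Setting $\mathcal V[\alpha] = \hat f_\alpha / \mathcal R[\alpha]$ and applying \cref{thm:Combine}, the call to $\LinMod$ at \cref{BiInterp:LinMod} returns $\sum_{\alpha \in \X} \mathcal V[\alpha] \prod_{\alpha' \in \X \setminus \set{\alpha}}(X - \alpha')$, which is precisely \eqref{eqn:explicit_interpolation}.

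For complexity, I would account line-by-line. The loop at \cref{BiInterp:for} performs at most $\nX$ univariate interpolations on at most $\nY$ points each, contributing $\Oh(\nX \Multlog{\nY})$ by \cref{prop:uni-interp}. Building a balanced tree of $\X$ is $\Oh(\nX)$ and $\TreeVanish_X(\tree)$ costs $\Oh(\Mult{\nX}\log \nX)$ by \cref{lemma:TreeVanish}. The formal derivative at \cref{BiInterp:derivative} is $\Oh(\nX)$; the $\UniMPE$ at \cref{BiInterp:UniMPE} costs $\Oh(\Multlog{\nX})$ by \cref{prop:uni-mpe}; and scaling the $\nX$ polynomials $\hat f_\alpha$, each of degree less than $\nY$, by the scalars $1/\mathcal R[\alpha]$ totals $\Oh(\nX \nY)$ field operations. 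Finally \cref{thm:Combine} applied with $d = \nY$ bounds the $\LinMod$ call by $\Oh(\nY \Multlog{\nX})$. Summing these and absorbing lower-order terms gives $\Oh(\nY \Multlog{\nX} + \nX \Multlog{\nY}) \subseteq \sOh(\nX \nY)$.

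The main obstacle I expect is articulating the correctness reduction precisely: one must recognize that the outer Lagrange reconstruction of \eqref{eqn:explicit_interpolation}, although now viewed over the coefficient ring $\F[Y]$ rather than $\F$, still has purely scalar denominators $\prod_{\alpha' \neq \alpha}(\alpha - \alpha') \in \F$. This is what lets all $\nX$ of them be precomputed simultaneously by a single univariate multipoint evaluation of $g$, rather than incurring a $\nY$-sized division cost at each $\alpha$, and it is also what permits the generic routine $\LinMod$ — whose table entries are polynomials in $Y$ — to be applied without modification.
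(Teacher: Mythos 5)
Your proof is correct and follows essentially the same route as the paper: you establish $\hat f_\alpha = \hat f(\alpha,Y)$, use the barycentric derivative identity to identify $\mathcal R[\alpha]$ with the Lagrange denominators $\prod_{\alpha'\neq\alpha}(\alpha-\alpha')$, invoke \cref{thm:Combine} for correctness of the outer combination, and account for costs line-by-line with the same dominant terms. The only cosmetic discrepancy is the cost you assign to building the balanced partition tree (the paper uses $\Oh(\nX\log\nX)$), but this is absorbed by $\Oh(\Multlog{\nX})$ either way, so the conclusion is unaffected.
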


\begin{proof}
  Denote by $\tilde f \in \F[X,Y]$ the polynomial returned by the algorithm and $\hat f$ the polynomial  of \cref{lem:explicit_interpolation}, and we wish to prove $\tilde f = \hat f$.
  Note first that for any $\alpha \in \X$
  \[
    \hat f(\alpha, Y) = \sum_{\beta \in \Y_{\alpha}}\vals[\alpha,\beta]\prod_{\beta' \in \Y_{\alpha} \setminus \set{\beta}}\frac{y-\beta'}{\beta - \beta'} = \hat f_\alpha \ ,
  \]
  where $\hat f_\alpha$ is as computed in \cref{BiInterp:UniInterp}.
  By the correctness of $\LinMod$
  \[
    \tilde{f} = \sum_{\alpha \in \X} \hat f_{\alpha}/ \mathcal{R}[\alpha] \prod_{\alpha' \in \X \setminus \set{\alpha}}(X - \alpha') \ .
  \]
  Since $g = \sum_{\alpha \in \X}\prod_{\alpha' \in \X \setminus \set{\alpha}} (X - \alpha')$ we have that
  \[
    \mathcal{R}[\alpha] = \prod_{\alpha' \in \X \setminus \set{\alpha}}(\alpha - \alpha') \in \F\ , \quad \alpha \in \X \ .
  \]
  It follows that $\tilde f = \hat{f}$.

  For complexity we observe that computing $\tree$ in \cref{BiInterp:tree} cost $\Oh(\nX \log(\nX))$, and $\VanTree$ in \cref{BiInterp:vanish} costs $\Oh(\Multlog{\nX})$ by \cref{lemma:TreeVanish}. Computing $g$ in \cref{BiInterp:derivative} costs $\Oh(\nX)$ and $\mathcal{R}$ in \cref{BiInterp:UniMPE} costs $\Oh(\Multlog{\nX})$ by \cref{prop:uni-mpe}. The total cost of computing all the $\hat f_{\alpha}$ for $\alpha \in \X$ in \cref{BiInterp:UniInterp} is $\Oh(\nX\Multlog{\nY})$ by \cref{prop:uni-interp}. Since $\mathcal R[\alpha] \in \F$, the division $\mathcal{V}$ in \cref{BiInterp:scaling} costs $\Oh(\nX\nY)$ operations, one for each of the $\deg \hat f_\alpha < \nY$ coefficients in each of the $\nX$ polynomials $\hat f_\alpha$.
  Finally \cref{BiInterp:LinMod} costs $\Oh(\nY \Multlog{\nX})$ by \cref{thm:Combine}. The total cost becomes as in the theorem.
\end{proof}

\begin{remark}
  \label{rem:interp_semi-grid}
  If $\pts$ is a semi-grid then \cref{algo:BiInterp} has quasi-linear complexity in the input size $|\pts|$.
  Furthermore, since the output polynomial $\hat f$ has $\deg_X \hat f < n_X$ and $\deg_Y \hat f < \nY$, then the
  $n_X \nY$-dimensional $\F$-vector space of polynomials satisfying such degree restrictions must be in bijection with the $n$-dimensional choice of values $\vals$.
  Hence, \cref{algo:BiInterp} returns the \emph{unique} polynomial $\hat f$ satisfying these degree bounds and which interpolate the values.
  This is in contrast to the case where $\pts$ is not a semi-grid, as we will discuss in the following section.
\end{remark}

\begin{remark}
  If $\pts$ is very far from being a semi-grid, the performance of the algorithm can sometimes be improved by a proper blocking-strategy in use of \LinMod.
  For example, suppose that $\X = \set{\alpha_1, \hdots, \alpha_k}$, and furthermore assume that $|\Y_{\alpha_1}| = \hdots = |\Y_{\alpha_{k-1}}| = 1$ while $|\Y_{\alpha_k}| = k$, so that $n = 2k-1 \in \Theta(k)$.
  The cost of \cref{algo:BiInterp} will therefore be $\sOh(k^2)$.
  However, in this case we can split the points $\pts$ into $\pts_1 := \setcomp{(\alpha,\beta) \in \pts}{\alpha \neq \alpha_k}$ and $\pts_2 := \setcomp{(\alpha,\beta) \in \pts}{\alpha = \alpha_k}$, and the interpolation values into $\vals_s := (\vals[\alpha, \beta] / \gamma_s)_{(\alpha,\beta) \in \pts_s} \in \F^{\pts_s}$ for $s = 1,2$, where $\gamma_s := \prod_{\alpha' \in \X(\pts) \setminus{\X(\pts_s)}}(\alpha - \alpha')$ have been precomputed. We can then compute $\hat{f}_s := \BiInterp(\pts_s,\vals_s)$ and obtain the desired interpolating polynomial from \cref{lem:explicit_interpolation} as follows:
  \[
    \hat{f} = \sum_{s=1,2}\hat{f_s}\prod_{\alpha \in \X(\pts) \setminus{\X(\pts_s)}}(X - \alpha_i).
  \]
  Not counting precomputation, the total cost of this approach becomes $\sOh(k) = \sOh(n)$.
\end{remark}

\subsubsection{Reducing the support}
\label{sec:find-small-interp}

We now continue to deal with the problem that we wish to find an interpolation polynomial with a constrained monomial support.
We do not deal with the completely general question but one relevant to unencoding \Cab codes: we are given positive integers $a, b \in \ZZ_{> 0}$ and seek an interpolation polynomial $f$ which is ``reduced'' in a specific way according to the monomial ordering $\orderab$.
We will use $\hat f$ from the preceding section as an initial approximation which we then transform into the sought polynomial.

In the following, we will use \Grobner bases and assume that the reader is familiar with the basic notions; see e.g.~\cite{cox_ideals_2007}.
We will exclusively use the monomial order $\orderab$, so to lighten notation, in this section we write $\LM$ instead of $\LM_{\orderab}$.
Given a point set $\pts \in \F^2$, we will call a monomial $X^i Y^j$ ``reducible'' if there is a polynomial $h \in \F[X,Y]$ with $\LM(h) = X^i Y^j$ and such that $h(P) = 0$ for all $P \in \pts$.
In other words, we can ``remove'' $X^i Y^j$ from the support of $f$ by properly choosing $\alpha \in \F$ and setting $f := f - \alpha h$, and this does not increase the leading monomial of $f$.

We would now like to address the following problem:
\begin{problem}
  \label{prob:reduce_interp}
  Given a point set $\pts \subseteq \F^2$ and $a, b \in \ZZ_{\geq 0}$, and a polynomial $\hat{f} \in \F[X,Y]$, compute a polynomial $f \in \F[X,Y]$ satisfying
  \[
    f(P) = \hat f(P) \for P \in \pts \ ,
  \]
  and such that no monomial in the support of $f$ is reducible.
\end{problem}

Note that the polynomial $f - \hat f$ vanishes at all points $\pts$.
It is therefore natural to investigate the ideal
\begin{equation}
  \label{eqn:vanideal}
  \VanIdeal = \{ h \mid h(P) = 0 \for P \in \pts \} \subseteq \F[X,Y] \ .
\end{equation}
In fact we have the following lemma, which follows immediately from standard properties of \Grobner bases, see e.g.~\cite[Theorem 3.3]{cox_ideals_2007}:
\begin{lemma}
  \label{lem:grobner_reduces}
  In the context of \cref{prob:reduce_interp}, let $G$ be a \Grobner basis of $\VanIdeal$ given in \eqref{eqn:vanideal} according to $\orderab$.
  Then $f$ is the unique remainder of dividing $\hat f$ with $G$ using the multivariate division algorithm, i.e.~$f = \hat f \rem G$.
\end{lemma}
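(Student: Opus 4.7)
The plan is to invoke two fundamental properties of the multivariate division algorithm when dividing by a Gröbner basis $G$ of an ideal $I$ under a fixed monomial order (here $\orderab$), as presented for instance in \cite{cox_ideals_2007}: (i) the algorithm produces an expression $\hat f = \sum_{g \in G} q_g\, g + r$ in which no term of the remainder $r$ is divisible by any $\LM(g)$; and (ii) this remainder $r$ is uniquely determined by $\hat f$ and $I$, i.e.~it is independent of the particular Gröbner basis chosen and of the order of reductions. I will use these to prove that $r := \hat f \rem G$ solves \cref{prob:reduce_interp}, and then argue uniqueness of such a solution.

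For the first half (existence), set $f := \hat f \rem G$. Since $\hat f - f = \sum_{g\in G} q_g\, g \in \VanIdeal$, the polynomial $\hat f - f$ vanishes at every $P \in \pts$, which yields the interpolation condition $f(P) = \hat f(P)$. To see that no monomial of $\supp(f)$ is reducible, recall that by the definition of a Gröbner basis, the set of leading monomials $\{\LM(h) \mid h \in \VanIdeal \setminus \{0\}\}$ is exactly the set of monomials divisible by some $\LM(g)$ with $g \in G$. A monomial $X^iY^j$ being reducible in the sense of \cref{prob:reduce_interp} means precisely that $X^iY^j = \LM(h)$ for some $h \in \VanIdeal$, hence that $\LM(g) \mid X^iY^j$ for some $g \in G$. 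Property (i) of the division algorithm therefore guarantees that no term of $f$ is reducible.

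For uniqueness, suppose $f_1$ and $f_2$ both satisfy the conclusions of \cref{prob:reduce_interp}. Then $f_1 - f_2$ vanishes on $\pts$, so $f_1 - f_2 \in \VanIdeal$. If $f_1 \neq f_2$, the leading monomial $\LM(f_1 - f_2)$ is the leading monomial of a nonzero element of $\VanIdeal$ and is therefore reducible; but it must also appear in $\supp(f_1) \cup \supp(f_2)$, contradicting the assumption that neither $f_1$ nor $f_2$ contains a reducible monomial. Hence $f_1 = f_2$, and combined with the existence argument above this identifies the unique solution of \cref{prob:reduce_interp} with $\hat f \rem G$.

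I do not expect a serious obstacle: the entire argument is a direct application of two standard facts about Gröbner bases (characterization of leading monomials of ideal elements, and uniqueness of the division remainder). The only point requiring care is to match the paper's ad hoc notion of ``reducible monomial'' with the Gröbner-theoretic characterization of $\LM(\VanIdeal \setminus \{0\})$, which is immediate once both notions are unpacked.
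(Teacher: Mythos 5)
Your proof is correct and takes essentially the same route as the paper: the paper dispenses with the lemma by citing standard Gr\"obner basis theory (\cite[Theorem 3.3]{cox_ideals_2007}), and your argument is precisely the standard unpacking of that theory — identifying the paper's ad hoc notion of a ``reducible'' monomial with $\LM_{\orderab}(\VanIdeal\setminus\{0\})$, which by the defining property of a Gr\"obner basis coincides with the monomials divisible by some $\LM(g)$, $g\in G$, and then invoking existence and uniqueness of the division remainder together with the observation that the difference of two solutions to \cref{prob:reduce_interp} lies in $\VanIdeal$.
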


Before we give an efficient algorithm for solving \cref{prob:reduce_interp}, the following lemma recalls some simple structure on weighted-degree \Grobner bases:

\begin{lemma}
  \label{lem:minimal_interpolation}
  Let $\pts \subset \F^2$ and $a, b \in \ZZ_{\geq 0}$, let $\VanIdeal$ be given by \eqref{eqn:vanideal}, and let $G = \{ G_1,\ldots,G_t \}$ be a minimal \Grobner basis of $\VanIdeal$.
  Then $\deg_Y(\LM(G_i)) \neq \deg_Y(\LM(G_j))$ for $i \neq j$.
  In particular, if $\VanIdeal$ contains an element $g$ such that $\LM(g)=Y^A$ for some $A \in \mathbb{Z}_{\ge 1}$, then $|G| = t \leq A+1$.
\end{lemma}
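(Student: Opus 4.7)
The proof plan rests on one standard property of a minimal \Grobner basis: no leading monomial of one basis element divides the leading monomial of another. Since $\orderab$ refines a weighted degree order and we are in two variables $X, Y$, the divisibility of monomials is essentially a coordinate-wise comparison in $(i, j)$ for $X^i Y^j$. I would apply this purely combinatorial constraint to pin down both statements.

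For the first claim, I would argue by contradiction. Suppose $\deg_Y(\LM(G_i)) = \deg_Y(\LM(G_j)) = v$ for some $i \neq j$. Writing $\LM(G_i) = X^{u_i} Y^v$ and $\LM(G_j) = X^{u_j} Y^v$, we may assume without loss of generality that $u_i \leq u_j$. But then $\LM(G_i)$ divides $\LM(G_j)$, contradicting the minimality of $G$. Hence the $Y$-degrees of the leading monomials of the elements of $G$ are pairwise distinct.

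For the second claim, I would use the defining property of a \Grobner basis: the ideal of leading monomials $\Ideal{\LM(\VanIdeal)}$ is generated by $\{\LM(G_1), \dots, \LM(G_t)\}$. Since $\LM(g) = Y^A \in \Ideal{\LM(\VanIdeal)}$, there exists some index $i$ with $\LM(G_i) \mid Y^A$, so $\LM(G_i) = Y^{A'}$ for some $0 \leq A' \leq A$. Now for every $j \neq i$, minimality forces $\LM(G_i) = Y^{A'} \nmid \LM(G_j)$, which means $\deg_Y(\LM(G_j)) < A'$. Combined with the first part, the $t - 1$ remaining $Y$-degrees are pairwise distinct nonnegative integers all strictly less than $A'$, so $t - 1 \leq A'$, giving $t \leq A' + 1 \leq A + 1$, as claimed.

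There is no real obstacle here beyond carefully invoking the correct property of minimality; the argument is essentially a pigeonhole on $Y$-degrees constrained by divisibility. The one subtlety worth double-checking is that the element $G_i$ with $\LM(G_i) = Y^{A'}$ indeed has $Y$-monomial leading term (no $X$ factor), which follows directly from $\LM(G_i) \mid Y^A$.
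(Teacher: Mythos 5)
Your proof is correct and takes essentially the same approach as the paper: both use minimality (no leading monomial divides another) to get pairwise-distinct $Y$-degrees, then identify the unique basis element with a pure $Y$-power leading monomial and apply a pigeonhole bound. The paper orders the $G_i$ by increasing $Y$-degree and observes that the element with $\LM = Y^k$ must be last, while you bound directly by noting the other $t-1$ distinct $Y$-degrees all lie strictly below $A'$; these are equivalent framings of the same counting argument.
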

\begin{proof}
  Assume oppositely that $\deg_Y(\LM(G_i)) = \deg_Y(\LM(G_j))$ for $i\neq j$, then either $\LM(G_i) \mid \LM(G_j)$ or $\LM(G_j) \mid \LM(G_i)$.
  Both of these contradict that $G$ is minimal.

  Assume now w.l.o.g.~that $\deg_Y(\LM(G_1) < \ldots < \deg_Y(\LM(G_t))$.
  Since $g \in \VanIdeal$ then $G$ contains an element, say $G_i$, whose leading monomial divides $Y^A$, i.e.~$\LM(G_i)=Y^k$ for $k \le A$.
  But then $\LM(G_i) \mid \LM(G_j)$ for $j > i$, so by the minimality of $G$ we must have $i = t$.
  Hence $t \leq k+1 \leq A+1$.
\end{proof}

We are now ready to solve \cref{prob:reduce_interp} in the generality that we need.
Our approach is to compute $\hat f \rem G$ using the fast multivariate division algorithm of van der Hoeven \cite{van_der_hoeven_complexity_2015}.

\begin{proposition}
  \label{prop:minimal_interpolation}
  Let $\pts \subset \F^2$ and $a, b \in \ZZ_{\geq 0}$, let $\VanIdeal$ be given by \eqref{eqn:vanideal}, and assume there is a $g \in \VanIdeal$ such that $\LM(g) = Y^A$ for $A \leq a$.
  Then there is an algorithm $\Reduce$ which inputs a polynomial $\hat f \in \F[X,Y]$ with $\deg_X(\hat f) < \nX := \nX(\pts)$ and $\deg_Y(\hat f) < A$, as well as the reduced \Grobner basis $G \subset \F[X,Y]$  of $\VanIdeal$ from \eqref{eqn:vanideal} according to $\orderab$, and which outputs a solution $f \in \F[X,Y]$ to \cref{prob:reduce_interp}.
  The complexity of the algorithm is:
  \[
    \Oh(a\Multlog{a\nX}\log(a\nX + ab)) \subset \sOh(a^2\nX) \ ,
  \]
\end{proposition}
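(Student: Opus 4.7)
By \cref{lem:grobner_reduces}, the desired output polynomial $f$ is precisely $\hat f \rem G$, the unique normal form of $\hat f$ under multivariate division by $G$ with respect to $\orderab$. The algorithm therefore consists simply of invoking a fast multivariate division procedure, specifically van der Hoeven's algorithm from \cite{van_der_hoeven_complexity_2015}, on the inputs $\hat f$ and $G$; correctness is then immediate from \cref{lem:grobner_reduces}.

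The bulk of the work is to bound the input sizes so that the asymptotic cost of van der Hoeven's algorithm specialises to the claimed expression. By hypothesis, $\hat f$ has at most $A \nX \leq a\nX$ monomials. By \cref{lem:minimal_interpolation}, the basis $G$ contains at most $A+1 \leq a+1$ elements, and the $Y$-degree of every monomial appearing in any $G_i$ is strictly less than $A \leq a$ (since $\LM(G_t) = Y^k$ with $k \leq A$ is the $Y$-maximal leading monomial, and all other leading monomials have smaller $Y$-degree by the minimality argument).

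Next I would bound the $X$-degree of each $G_i$. Observe that $\prod_{\alpha \in \X(\pts)} (X-\alpha) \in \VanIdeal$ has $\orderab$-leading monomial $X^\nX$; hence some element of $G$, say $G_1$, has leading monomial $X^\ell$ with $\ell \leq \nX$. Because $G$ is the reduced \Grobner basis, no monomial occurring in any other $G_i$ can be divisible by $\LM(G_1) = X^\ell$, so $\deg_X G_i < \ell \leq \nX$ for every $i$. Combining the two bounds, each $G_i$ has at most $a\nX$ monomials, and $G$ as a whole is of total size $\Oh(a^2 \nX)$, matching the stated soft-$\Oh$ expression.

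The main obstacle is then to invoke van der Hoeven's complexity statement correctly and extract the precise form $\Oh(a \Multlog{a\nX} \log(a\nX + ab))$. For this one verifies that the ``staircase'' of monomials in the support of $\hat f$ and in the quotient has dimension-$Y$ bounded by $a$ and dimension-$X$ bounded by $\nX$, that the leading monomials of $G$ have $\orderab$-weight bounded by $\Oh(a\nX + ab)$, and that each of the $\Oh(a)$ reduction steps uses a call to relaxed polynomial multiplication of total length $\Oh(a\nX)$ with a logarithmic overhead in the ambient weight range; this yields the factor $\log(a\nX + ab)$. Feeding these sizes into the bound of \cite{van_der_hoeven_complexity_2015} produces the claimed complexity.
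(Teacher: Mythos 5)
Your proof plan is on the right track at the high level: use van der Hoeven's division algorithm, invoke Lemma~\ref{lem:grobner_reduces} for correctness, and use Lemma~\ref{lem:minimal_interpolation} to bound $|G| \leq A + 1 \leq a + 1$. Your bounds on the sizes of $\hat f$ and of the $G_i$ (at most $\Oh(a)$ in $Y$-degree and $\Oh(\nX)$ in $X$-degree, via the reduced property of $G$) are also correct.

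However, there is a genuine gap in the final paragraph. The complexity of van der Hoeven's algorithm is expressed in terms of \emph{a priori} degree bounds $r_i, s_i$ on the products $Q_i G_i$, where the $Q_i$ are the quotients produced by the division. You bound $\hat f$ and the $G_i$, and you assert that ``each of the $\Oh(a)$ reduction steps uses a call to relaxed polynomial multiplication of total length $\Oh(a\nX)$,'' but you do not justify why the quotients $Q_i$ themselves remain small. This is not automatic: during division, the intermediate remainders $R_k$ can in principle grow in $X$- or $Y$-degree when $\mu_k G_{i_k}$ is subtracted, and bounding $\hat f$ and the outputs $f$ and $G_i$ does not by itself cap the degrees that arise mid-computation (cancellation could hide large quotients). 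The paper handles this by choosing a specific order for $G$ (putting first the element with $\LM \mid X^{\nX}$ and second the element with $\LM \mid Y^A$) and then running an explicit induction showing that $\deg_X R_k < 2\nX$ and $\deg_Y R_k < 2a$ throughout, which in turn bounds each $\deg_X(Q_iG_i)$ and $\deg_Y(Q_iG_i)$. Without this ordering convention and the accompanying invariant on $R_k$, the stated bound $\Oh(a\Multlog{a\nX}\log(a\nX + ab))$ does not follow from van der Hoeven's cost formula. You should add the careful ordering of $G_1,\dots,G_t$ and the inductive degree analysis of the intermediate remainders to close the argument.
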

\begin{proof}
  To achieve our target cost we will order $G$ as a list in specific way as explained below.
  The algorithm of \cite{van_der_hoeven_complexity_2015} can be seen as a fast way to carry out the following deterministic variant of classical multivariate division: we first initialize a remainder $R_1 = \hat f$.
  In iteration $k$ we set $i_k$ to be the smallest index such that $\LM(G_{i_k})$ divides \emph{some} term of the current remainder $R_k$.
  We then set $\mu_k = m_k/\LM(G_{i_k})$, where $m_k$ is the \emph{maximal} such term of $R_k$, according to $\orderab$.
  We then update $R_{k+1} = R_k - \mu_k G_{i_k}$.
  In the first iteration $k$ where no such $i_{k}$ can be found, we set $f := R_k$.
  The output of the division algorithm is $f, Q_1,\ldots,Q_t \in \F[X,Y]$ such that:
  \[
    \hat f = f + Q_1 G_1 + \ldots + Q_tG_t \ ,
  \]
  where $Q_i = \sum_{k, i_k = i} \mu_k$.
  The algorithm ensures that no term of $f$ is divisible by $\LM(G_i)$ for any $i$.

  The cost of the algorithm can be bounded as:
  \begin{align}
    \label{eq:hoeven-cost}
    \textstyle
    \Oh\big(\sum_{i=0}^t \Multlog{r_is_i}\log(ar_i+bs_i) \big) \ .
  \end{align}
  Here $r_i$ and $s_i$ denotes an a priori bound on $\deg_X(Q_i G_i)$ respectively $\deg_Y(Q_i G_i)$ for $i = 1,\ldots,t$, and $r_0$ and $s_0$ bounds $\deg_X(f)$ respectively $\deg_Y(f)$.

  We now specify how to order the $G_1,\dots,G_t$ to guarantee that the $r_i$ and $s_i$ are sufficiently small.
  Note that $\VanIdeal$ contains $\prod_{\alpha \in \X(\pts)}(X - \alpha)$, so $G$ must have an element whose leading monomial divides $X^{\nX}$: we set $G_1$ to be this element, i.e.~we have $\deg_X G_1 \leq n_X$ and $\deg_Y G_1 = 0$.
  Similarly, $G$ must contain an element whose leading monomial divides $Y^A$, the leading monomial of $g$ assumed by the proposition.
  We set this as $G_2$, and so $\LM(G_2) = Y^{d}$, where $d \leq A \leq a$, which also implies $\deg_Y(G_2) = d$.
  The order of the remaining elements of $G$ can be set arbitrary, but note that since $G$ is reduced then $\deg_X G_j < \nX$ and $\deg_Y G_j < d \leq a$ for $j = 3,\dots,t$.

  The degrees of $R_{k+1}$ compared to $R_k$ for any iteration $k$ will satisfy
  \begin{align*}
    \deg_X R_{k+1} &\leq \max( \deg_X R_k,\ \deg_X(\mu_kG_{i_k})) \\
    \deg_Y R_{k+1} &\leq \max( \deg_Y R_k,\ \deg_Y(\mu_kG_{i_k})) \ .
  \end{align*}
  We also know that $\deg_X\mu_k \leq \deg_X R_k$ and $\deg_Y\mu_k \leq \deg_Y R_k$.
  From this we can further analyze the degrees of $\mu_kG_{i_k}$ depending on $i_k$:
  \begin{itemize}
    \item $i_k = 1$, i.e.~$\deg_X R_k \geq \nX$.
    Then $\deg_X(\mu_k G_1) = \deg_X R_k$ and $\deg_Y(\mu_kG_1) \leq \deg_Y(R_k)$  since $\deg_Y(G_1) = 0$.
    \item $i_k = 2$, i.e.~$\deg_X R_k < \nX$ and $\deg_Y R_k \geq d$.
    Then $\deg_Y(\mu_k G_2) \leq \deg_Y R_k$.
    Also $\deg_X \mu_k \leq \deg_X R_k$ which means $\deg_X(\mu_k G_2) < 2\nX$.
    \item $i_k > 2$, i.e.~$\deg_X R_k < \nX$ and $\deg_Y R_k < d$.
    Then $\deg_X(\mu_k G_{i_k}) < 2\nX$ and $\deg_Y(\mu_k G_{i_k}) < 2d$.
  \end{itemize}
  Since we initialize by $R_1 = \hat f$ satisfying $\deg_X R_1 \leq \nX$ and $\deg_Y R_1 < a$, the above observations inductively ensure that $\deg_X R_k < 2\nX$ and $\deg_Y R_k < 2d \leq 2a$ for all iterations $k$.
  Hence for each $i = 1,\dots,t$ we get:
  \[
    \deg_X (Q_iG_i) < 2\nX \text{ and }\deg_Y (Q_iG_i) < 2a \ .
  \]
  It follows that $r_1,\dots,r_t \in \Oh(\nX)$ and $s_1,\dots,s_t \in \Oh(a)$, and since $f = R_k$ for the last iteration $k$, we also have $r_0 \in \Oh(\nX)$ and $s_0 \in \Oh(a)$.
  Combining this with \cref{lem:minimal_interpolation} we get the desired complexity estimate from \eqref{eq:hoeven-cost}.
\end{proof}

We will consider the computation of the reduced \Grobner basis $G \subset \F[X,Y]$ as precomputation, but a small discussion on the complexity of this computation is in order.
The ideal $\VanIdeal$ is well-studied, and the structure of a lex-ordered \Grobner basis with $x \prec y$ was already investigated by Lazard \cite{lazard_ideal_1985}.
Later and more explicitly, $\VanIdeal$ appeared as a special case of the ideals studied in soft-decoding of Reed--Solomon codes using the K\"otter--Vardy decoding algorithm, see e.g.~\cite{lee_interpolation_2006}: they give an algorithm to semi-explicitly produce a lex-ordered \Grobner basis, which they then reduce into a weighted-degree basis using essentially the Mulders--Storjohann algorithm \cite{mulders_lattice_2003}.
The total complexity seems to be $\Oh(a^3n^2)$.
\cite{jeannerod_fast_2016} gives a very general algorithm which can be applied to this case with better complexity, but there are a few technical details to be filled in: their algorithm works on $\F[X]$-modules and produces ``Popov bases'' of these.
It is folklore that this can yield a \Grobner basis algorithm for $\F[X,Y]$ ideals, if one has a bound on the maximal $Y$-degree appearing in a minimal \Grobner basis.
In our case that would be $a$.
Another technical detail is that both \cite{lee_interpolation_2006,jeannerod_computing_2017} deal with monomial orders of the form $\preceq_{1,c}$ for an integer $c$, and do not support $\preceq_{a,b}$ (\cite{jeannerod_computing_2017} supports other orders which do not translate into the form $\preceq_{u,v}$).
The order $\orderab$ is equivalent to the order $\preceq_{1,b/a}$, but $b/a \in \QQ \setminus \ZZ$.
Such rational weights are handled for a similar \Grobner basis computation in \cite{nielsen_sub-quadratic_2015}, and it seems reasonably that the approach could be combined with  either \cite{lee_interpolation_2006,jeannerod_computing_2017}, though the details are beyond the scope of this paper.

A different, generic approach is to observe that $\VanIdeal$ is a zero-dimensional ideal which means that we can use the FGLM algorithm to transform a \Grobner basis from the lex-order to one for $\orderab$ \cite{faugere_efficient_1993}.
The $X$-degree of the lex-order \Grobner basis output by $\VanIdeal$ will be $\nX$, and then the FGLM algorithm has running time at most $\Oh(a^3\nX^3)$.

\subsection{Fast unencoding}
\label{ssec:unencoding}
We will now apply the results from \cref{ssec:fast-interp} to the unencoding problem.
For a given codeword we compute the interpolating polynomial in \cref{lem:explicit_interpolation}, which we then reduce using techniques described in \cref{sec:find-small-interp}.
For the algorithm listing see \cref{algo:UnEnc}.

\begin{algorithm}
  \caption{$\UnEnc$: Unencoding of \Cab codes}\label{algo:UnEnc}
  \KwInput{
    A \Cab code $\agcode_\Hpol(\pts,m)$ with $\pts = \{ P_1, \hdots, P_n \}$ being finite rational places.
    A reduced \Grobner basis $G \subseteq \F[X,Y]$ of $\VanIdeal$ as defined in \eqref{eqn:vanideal} under monomial order $\orderab$, where $a = \deg_Y(\Hpol)$ and $b = \deg_X(\Hpol)$.
  }
  \KwOutput{$f_\Hpol \in \riroch(m P_{\infty})$ in standard form and monomial support in $\hat B$ given in \cref{ssec:encoding-map}, and s.t.~$f(P_i) = c_i \for i = 1, \hdots, n$.}
  $\vals \assign (c_i)_{P_i \in \pts} \in \F^{\pts}$ \;
  $\hat{f} \assign \BiInterp(\pts, \vals) \in \F[X,Y]$ \label{UnEnc:BiInterp}\;
  $f \assign \Reduce(\hat{f}, G) \in \F[X,Y]$ \; \label{UnEnc:Reduce}
  \Return $f_\Hpol \assign f(x,y) \in \F[x,y]$ \label{UnEnc:MapToFunF}
\end{algorithm}

\begin{theorem}
  \label{thm:unencoding}
  \cref{algo:UnEnc} is correct.
  It uses at most
  \[
    \Oh(a\Multlog{a\nX}\log(a\nX + ab))  \subset \sOh(a^2 \nX) \ ,
  \]
  operations in $\F$.
\end{theorem}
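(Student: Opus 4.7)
The plan is to separate correctness from the complexity estimate, mirroring the two substantive lines of the algorithm: the call to $\BiInterp$ and the call to $\Reduce$.

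For correctness, I will first apply \cref{thm:BiInterp} to the polynomial $\hat f$ computed in \cref{UnEnc:BiInterp}: it interpolates $\vals$ on $\pts$ and satisfies $\deg_X \hat f < \nX$ and $\deg_Y \hat f < \nY \leq a$, the last inequality holding because $\pts$ consists of rational points on the \Cab curve $\Hpol = 0$. Since $\Hpol \in \VanIdeal$ and the tie-break $X^b \orderab Y^a$ forces $\LM_{\orderab}(\Hpol) = Y^a$, we are in the hypotheses of \cref{prop:minimal_interpolation} with $A = a$, and the degree bounds on $\hat f$ just established fit its input requirements. That proposition therefore produces $f$ with $f \equiv \hat f \pmod{\VanIdeal}$ and no term of $f$ divisible by $\LM(g)$ for any $g \in \VanIdeal$; in particular $\ev_\pts(f) = \ev_\pts(\hat f) = (c_i)$, and the mapping $\F[X,Y] \to \F[x,y]$ in \cref{UnEnc:MapToFunF} is faithful on polynomials in standard form, so $f_\Hpol(P_i) = c_i$ for every $i$.

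The more delicate part of correctness is to verify that $\supp(f) \subseteq \hat B$, so that $f_\Hpol$ is actually the $\varphi$-image of a unique message. Two ingredients are needed. Since $\LM(\Hpol) = Y^a$, every standard monomial of $\VanIdeal$ under $\orderab$ has $Y$-degree less than $a$, so $f$ is automatically in standard form. Next, because $\vec c$ is a codeword, there exists some $f' \in \riroch(mP_\infty)$ with $\ev_\pts(f') = \vec c$, and $f$ is the unique reduced representative of the coset $\hat f + \VanIdeal = f' + \VanIdeal$; since multivariate division never introduces terms of larger $\orderab$-weight, every term of $f$ has $(a,b)$-weighted degree at most $m$, so $\supp(f) \subseteq B$. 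To upgrade this to $\supp(f) \subseteq \hat B$, I will show that for any $X^iY^j \in B$, being $\LM(g)$ for some $g \in \VanIdeal$ is equivalent to being $\LM(g')$ for some $g' \in \ker(\ev_\pts)$. The only nontrivial direction is the forward one, obtained by reducing such a $g$ modulo $\Hpol$: each elimination of a $Y^a$-divisible term produces only terms of strictly smaller $\orderab$-weight (since $X^{u+b}Y^{v-a}$ has the same weighted degree as $X^uY^v$ but is smaller by the tie-break, and any term coming from $G(X,Y)$ has strictly smaller weighted degree), so the resulting $g'$ has the same leading monomial $X^iY^j$, $Y$-degree less than $a$, and weighted degree at most $m$, i.e., lies in $\ker(\ev_\pts)$. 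This identifies the standard monomials of $\VanIdeal$ inside $B$ precisely with $\hat B$ and concludes correctness.

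For the complexity bound I will simply add the two nontrivial costs. By \cref{thm:BiInterp} the $\BiInterp$ step costs $\Oh(\nY \Multlog{\nX} + \nX \Multlog{\nY}) \subseteq \sOh(a\nX)$ using $\nY \leq a$; by \cref{prop:minimal_interpolation} the $\Reduce$ step costs $\Oh(a \Multlog{a\nX} \log(a\nX + ab)) \subset \sOh(a^2 \nX)$, which dominates. The main difficulty of the whole argument lies in the support-in-$\hat B$ bookkeeping above, which requires juggling the $\orderab$ monomial order, the standard monomials of $\VanIdeal$, and the specific choice of $\hat B$ from \cref{ssec:encoding-map}; once that is in place, both the interpolation correctness and the complexity estimate are immediate.
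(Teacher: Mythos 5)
Your proof is correct and follows the paper's own route: establish correctness by composing the guarantees of \cref{thm:BiInterp} and \cref{prop:minimal_interpolation}, then add the two costs, observing that $\nY \le a$ so the $\Reduce$ step dominates. Where you go beyond the paper's proof is in the monomial-support bookkeeping: the paper states tersely that ``$f$ contains no reducible monomials, which means the support of $f_\Hpol$ is in $\hat B$,'' whereas you explicitly supply (a) the observation that $f$ is the normal form of some $f' \in \riroch(mP_\infty)$ so that $\deg_{a,b}(f) \le m$ and hence $\supp(f) \subseteq B$, and (b) the verification, via reduction modulo $\Hpol$ and the tie-break $X^b \orderab Y^a$, that within $B$ the reducible monomials are exactly $B \setminus \hat B$. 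Both are facts the paper implicitly relies on, so your version is a more complete rendering of the same argument rather than a different one.
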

\begin{proof}
  By the correctness of \cref{algo:Combine} then $\hat f$ satisfies $\hat f(P_i) = c_i$ for $i=1,\ldots,n$ and by \cref{prop:minimal_interpolation} then so does $f$ and hence $f_\Hpol$.
  For the monomial support on $f_\Hpol$, note that $\Hpol \in \VanIdeal$ since $\Hpol$ vanishes at all of $\pts$.
  Since $\LM_{\orderab}(\Hpol) = Y^a$ then $G$ contains an element $G_1 \in \F[X,Y]$ with $\LM_{\orderab}(G_1) \mid Y^a$.
  Hence $\deg_Y(f) < \deg_Y(G_1) \leq a$ and so $f_\Hpol$ is obtained in standard form from $f$ using the natural inclusion of $\F[x,y]$ in $\F[X,Y]$, and the monomial support of $f_\Hpol$ corresponds exactly to the monomial support of $f$.
  By \cref{prop:minimal_interpolation}, $f$ contains no reducible monomials, which means that the support of $f_\Hpol$ is in $\hat B$.

  For complexity, \cref{thm:BiInterp} implies that \cref{UnEnc:BiInterp} costs
  \begin{align*}
    \Oh(\nY \Multlog{\nX } + \nX  \Multlog{\nY}) \ .
  \end{align*}
  By \cref{lem:explicit_interpolation} then $\deg_X(\hat f) < \nX$.
  Note that $\nY \leq a$ since for any $X$-coordinate, there can be at most $a$ solutions in $Y$ to the \Cab curve equation.
  Since $\Hpol \in \VanIdeal$ then by \cref{prop:minimal_interpolation}, \cref{UnEnc:Reduce} costs:
  \begin{align*}
    \Oh(a\Multlog{a\nX}\log(a\nX + ab)) \ ,
  \end{align*}
  and this dominates the total complexity.
  As mentioned \cref{UnEnc:MapToFunF} costs no operations in $\F$.
\end{proof}

Similar to the situation in \cref{ssec:fast-encoding}, the complexity of \cref{algo:UnEnc} depends on the value of $a$ compared to the code length as well as the layout of the evaluation points $\pts$.
We will return to analyze special cases in the following section, but the following corollary bounds the complexity in the worst case under very mild assumptions on the \Cab code.
For codes with $n \approx q$, this cost is not asymptotically better than the naive unencoding using linear algebra, which has cost $\Oh(n^2)$ assuming some precomputation, but one can keep in mind that AG codes are mostly interesting for use in constructing codes which are markedly longer than the field size.

\begin{corollary}
  \label{cor:unenc_any_cab_code}
  In the context of \cref{algo:UnEnc}, let $q$ be the cardinality of $\F$ and assume $n \geq q$.
  Assume further that the genus $g$ of the \Cab curve satisfies $g \leq n$.
  Then the complexity of \cref{algo:UnEnc} is $\sOh(q n) \subset \sOh(n^2)$.
\end{corollary}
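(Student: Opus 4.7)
The plan is to substitute the bounds $a \in O(\sqrt n)$ and $n_X \leq q$ into the complexity estimate from \cref{thm:unencoding}, following exactly the same strategy used for \cref{cor:enc_any_cab_code}.

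First, since the $X$-coordinates of points in $\pts$ lie in $\F$, we immediately have $n_X \leq q$. Next, by the convention set in the remark after \cref{def_Cabpol}, we may assume $a \leq b$. Combining this with $a \neq b$ (they are coprime and we excluded $a=b=1$), the genus formula $g = \tfrac{1}{2}(a-1)(b-1)$ yields
\[
  n \;\geq\; g \;=\; \tfrac{1}{2}(a-1)(b-1) \;\geq\; \tfrac{1}{2}(a-1)^2,
\]
so $a \leq \sqrt{2n} + 1 \in O(\sqrt n)$, and in particular $a^2 \in O(n)$.

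Plugging these into \cref{thm:unencoding}, the cost of \cref{algo:UnEnc} is
\[
  \sOh(a^2 n_X) \;\subseteq\; \sOh(n \cdot q) \;=\; \sOh(qn).
\]
Finally, the assumption $n \geq q$ gives $qn \leq n^2$, hence $\sOh(qn) \subset \sOh(n^2)$, as claimed. The proof involves no real obstacles: the main work was already discharged in \cref{thm:unencoding}, and this corollary is just a routine bookkeeping step converting the sharp bound into one depending only on the length $n$ and field size $q$.
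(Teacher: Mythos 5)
Your proof is correct and follows essentially the same route as the paper: substitute $n_X \leq q$ and the genus-derived bound $a \in O(\sqrt{n})$ (valid after swapping $X,Y$ to ensure $a<b$) into the cost $\sOh(a^2 n_X)$ from \cref{thm:unencoding}, and conclude with $n \geq q$. The paper's proof is terser, simply citing the bounds established in the proof of \cref{cor:enc_any_cab_code}, but the content is identical.
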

\begin{proof}
  Assuming w.l.o.g. that $a < b$, we use the same upper bound $a \leq \sqrt n$ as in the proof of \cref{cor:enc_any_cab_code}.
\end{proof}

\begin{remark}
  We discuss for which \Cab codes it could be faster to use the unencoding approach discussed in \cref{sec:relat-work-bivar} using structured system solving, which costs $\sOh(a^{\omega-1} n)$.
  When ignoring hidden constants and log-terms, we see that whenever
  \begin{equation}
    \label{eqn:unenc_sysfaster}
    a^{3-\omega} > n/n_X \ ,
  \end{equation}
  structured system solving should be faster than our approach.
  Assuming $a < b$ then the genus $g \geq \tfrac 1 2 (a-1)^2$ and \Cab codes of most interest to coding theory satisfy $g < n$.
  Asymptotically replacing $a-1$ by $a$, we conclude that for \eqref{eqn:unenc_sysfaster} to hold, then at least
  $(n/n_X)^{2/(3-\omega)} < 2n$.
  Now $n_X \leq q$, so this is only possible if $n < 2q^{2/(\omega-1)}$.
  Taking the best known value for $\omega \approx 2.37286$ \cite{le_gall_powers_2014}, we get $n < 2q^{1.46}$.
  However, in practice we use matrix multiplication algorithms with values of $\omega$ quite close to $3$.
  For instance Strassen's multiplication algorithm has exponent $\hat\omega \approx 2.81$ \cite{strassen_gaussian_1969}, which would give the condition $n < q^{1.1}$, which can be considered a quite short \Cab code.
\end{remark}

\section{Applications}
\label{sec:applications}
In this section, we will apply \cref{algo:bi-eval} to the encoding and unencoding for various AG codes coming from \Cab curves. As we will show, in many interesting cases we can encode and unencode faster than $\Oh(n^2),$ in some cases even in $\sOh(n).$

\subsection{Quasi-linear encoding and unencoding for \Cab curves on semi-grids}
\label{ssec:fast-unencoding}

We already observed in \cref{rem:mpe_semi-grid} that our multipoint evaluation algorithm has very good complexity when the evaluation points lie on a semi-grid.
In this section we therefore investigate certain \Cab curves having many points that lie on a semi-grid and the complexity of our algorithms for codes over such curves.
Specifically, we will be interested in the following types of \Cab codes:

\begin{definition}
  \label{def:cab_semigrid}
  A \Cab code $\agcode_\Hpol(\pts, m)$ is called \emph{maximal semi-grid} if $\pts$ is a semi-grid with $\nY(\pts) = a =: \deg_Y(H)$ and $m < n$.
\end{definition}

Note the condition $m < n$ means that the encoding map is injective on $\riroch(m P_\infty)$ so the complications discussed in \cref{ssec:encoding-map} do not apply.

\begin{proposition}
  \label{prop:cab_semigrid_enc}
  Let $\agcode_\Hpol(\pts, m)$ be a maximal semi-grid \Cab code of length $n$.
  Then encoding using \cref{algo:Enc} has complexity $\Oh(\Multlog{n}) \in \sOh(n)$.
\end{proposition}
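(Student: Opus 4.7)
The plan is to invoke \cref{thm:Enc} directly and then use the two defining features of a maximal semi-grid \Cab code (the semi-grid equality $\nY(\pts)=a$ together with $m<n$) to simplify the complexity estimate.

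First, I would recall that \cref{thm:Enc} already bounds the cost of \cref{algo:Enc} by $\Oh(\Multlog{m + a\nX})$, where $\nX = \nX(\pts)$. Thus it suffices to show that $m + a\nX \in \Oh(n)$, after which monotonicity of $\Mult{\cdot}\log(\cdot)$ gives $\Oh(\Multlog{n}) \subset \sOh(n)$.

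The key observation is that the semi-grid assumption with $\nY(\pts)=a$ means every $\alpha \in \X(\pts)$ satisfies $|\Y_\alpha(\pts)| = a$ (the value $0$ is excluded from the list of $X$-coordinates by definition of $\X(\pts)$). Consequently
\[
  n \;=\; |\pts| \;=\; \sum_{\alpha \in \X(\pts)} |\Y_\alpha(\pts)| \;=\; a\,\nX,
\]
so $a\nX = n$. Combined with the hypothesis $m < n$, this yields $m + a\nX < 2n$, so the complexity from \cref{thm:Enc} collapses to $\Oh(\Multlog{n})$.

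There is essentially no obstacle; the statement is a direct corollary of the complexity bound already proved in \cref{thm:Enc}, with the semi-grid condition precisely serving to make the two additive terms $m$ and $a\nX$ in that bound both linear in $n$. The only slightly subtle point to check, were I being careful, is that $\Mult{\cdot}$ and the logarithmic factor are indeed monotone in their argument, which is standard.
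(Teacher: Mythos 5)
Your proof is correct and takes essentially the same approach as the paper: invoke \cref{thm:Enc}, use the semi-grid equality $a\nX = n$ and the hypothesis $m < n$ to bound $m + a\nX$ by $\Oh(n)$, and conclude. The paper states this more tersely but the argument is the same.
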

\begin{proof}
  Since $\agcode_\Hpol(\pts, m)$ is maximal semi-grid then $a n_X(\pts) = \nY(\pts) n_X(\pts) = |\pts| = n$, and further $m < n$.
  Hence, the result follows from \cref{thm:Enc}.
\end{proof}

The cost of unencoding using \cref{algo:UnEnc} is dominated by the call to $\Reduce$ in \cref{UnEnc:Reduce}.
The following proposition shows that for maximal semi-grid \Cab code, this step can be omitted.
First a small lemma.

\begin{lemma}
  \label{lem:semigrid_vanideal}
  Let $\agcode_\Hpol(\pts, m) \subseteq \F^n$ be a maximal semi-grid \Cab code.
  Let $\VanIdeal_\Hpol = \{ g \in \F[x,y] \mid g(P) = 0 \textrm{ for all } P \in \pts \}$.
  Then $\VanIdeal_\Hpol = G \cdot \F[x,y]$, where $G = \prod_{\alpha \in \X(\pts)} (x - \alpha)$.
\end{lemma}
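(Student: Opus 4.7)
The plan is to prove the two inclusions $G \cdot \F[x,y] \subseteq \VanIdeal_\Hpol$ and $\VanIdeal_\Hpol \subseteq G \cdot \F[x,y]$ separately.

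For the first inclusion, I would simply observe that for any $P = P_{\alpha,\beta} \in \pts$ we have $\alpha \in \X(\pts)$, hence one of the factors $(x-\alpha')$ of $G$ vanishes at $P$. Any multiple of $G$ by an element of $\F[x,y]$ therefore vanishes at every point of $\pts$, so $G \cdot \F[x,y] \subseteq \VanIdeal_\Hpol$.

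The substantive direction is the reverse inclusion. Let $g \in \VanIdeal_\Hpol$ and write $g$ in standard form as $\tilde g(X,Y) = \sum_{j=0}^{a-1} g_j(X) Y^j \in \F[X,Y]$ with $\deg_Y \tilde g < a$; this is possible as discussed in \cref{ssec:cab_codes}. For each fixed $\alpha \in \X(\pts)$, consider the univariate polynomial $\tilde g(\alpha, Y) \in \F[Y]$, which has degree at most $a-1$. Because $\agcode_\Hpol(\pts,m)$ is maximal semi-grid, $|\Y_\alpha(\pts)| = a$, so $\tilde g(\alpha, Y)$ has at least $a$ distinct roots (namely the elements of $\Y_\alpha(\pts)$); hence $\tilde g(\alpha, Y) \equiv 0$. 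This forces $g_j(\alpha) = 0$ for every $\alpha \in \X(\pts)$ and every $j$, so $G(X) = \prod_{\alpha \in \X(\pts)}(X-\alpha)$ divides each $g_j(X)$ in $\F[X]$, and consequently divides $\tilde g(X,Y)$ in $\F[X,Y]$. Writing $\tilde g(X,Y) = G(X)\, h(X,Y)$, the fact that $G$ depends only on $X$ gives $\deg_Y h = \deg_Y \tilde g < a$, so $h$ is itself in standard form. Passing to the quotient $\F[x,y]$, we obtain $g = G \cdot h$ in $\F[x,y]$, which shows $g \in G \cdot \F[x,y]$.

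The only potential subtlety is the passage between $\F[X,Y]$ and $\F[x,y]$: one must check that dividing by $G$ preserves the standard-form condition so that $h$ can be unambiguously identified with an element of $\F[x,y]$. This is immediate since $G \in \F[X] \subseteq \F[X,Y]$ has $Y$-degree zero, and it is really the one place the argument could otherwise go astray. The key combinatorial ingredient, which is exactly the definition of maximal semi-grid, is that $|\Y_\alpha(\pts)| = a$ for every $\alpha \in \X(\pts)$; without this, the degree-counting argument would only produce partial vanishing of the $g_j$ and one would have to instead work with a nontrivial \Grobner basis.
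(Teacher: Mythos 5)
Your proof is correct, but it takes a genuinely different route from the paper's. The paper works in the function-field language: it computes the divisor of each $x - \alpha$ on the curve (using that $x-\alpha$ has pole order $a$ at $P_\infty$ and, by the maximal semi-grid hypothesis, exactly $a$ zeros among the finite places over $\alpha$), concludes that the divisor of $G$ is $(G) = D - n P_\infty$ with $D = \sum_{P\in\pts} P$, and then observes that for any $z \in \VanIdeal_\Hpol$ the quotient $z/G$ has poles only at $P_\infty$, hence lies in $\F[x,y]$. Your argument instead stays entirely at the level of polynomial arithmetic in $\F[X,Y]$: write $g$ in standard form, note that for each $\alpha \in \X(\pts)$ the specialization $\tilde g(\alpha, Y)$ is a degree-$(< a)$ polynomial with $a$ roots (the maximal semi-grid hypothesis entering precisely here), so it vanishes identically; deduce that $G(X)$ divides each coefficient $g_j(X)$ and hence $\tilde g$, and pass to the quotient. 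Both proofs use the hypothesis $|\Y_\alpha| = a$ at the same critical spot — the paper to identify the zero divisor of $x-\alpha$, you to apply the degree-versus-roots bound — so neither actually gains extra generality, but your version is more elementary and self-contained (no divisors or Riemann--Roch spaces), while the paper's version fits the divisor-theoretic vocabulary used throughout. One small stylistic remark: you flag the standard-form condition on $h$ as a potential subtlety, but in fact it is superfluous — you do not need $h$ to be in standard form to conclude $h(x,y) \in \F[x,y]$; any polynomial representative would do. That said, noting $\deg_Y h < a$ is harmless and makes the identification with $\F[x,y]$ fully explicit.
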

\begin{proof}
  Clearly $G \in \VanIdeal_\Hpol$ so $G \cdot \F[x,y] \subseteq \VanIdeal_\Hpol$.
  For the other inclusion, observe first that we could write $\VanIdeal_\Hpol = \bigcup_{i \in \ZZ}\riroch(-D + i P_{\infty})$, where $D = \sum_{P \in \pts} P$.
  Note now that the pole order of $x - \alpha$ at $P_\infty$ is $a$, and it has poles nowhere else.
  On the other hand, it has a zero at each of the points $\{ (\alpha, \beta) \mid \beta \in \Y_\alpha(\pts) \}$ and there are $\nY(\pts) = a$ of them.
  Hence, the divisor of $x - \alpha$ is given exactly as
  \[
    (x - \alpha) = \sum_{\beta \in \Y_{\alpha}}P_{(\alpha,\beta)} - aP_{\infty} \ ,
  \]
  where $P_{\alpha, \beta} \in \pts$ is the place corresponding to the point $(\alpha, \beta) \in \pts$.
  It follows that $(G) = D - n P_{\infty}$.
  Therefore, for any $z \in \VanIdeal_\Hpol$ we have $z/G \in \riroch(s P_\infty)$ for some $s \in \ZZ$, i.e.~$z/G$ has only poles at $P_\infty$ and so $z/G \in \F[x,y]$.
  Hence $\VanIdeal_\Hpol \subseteq G \cdot \F[x,y]$, and we conclude equality.
\end{proof}

\begin{proposition}\label{prop:fhat_is_f}
  Let $\agcode_\Hpol(\pts, m) \subseteq \F^n$ be a maximal semi-grid \Cab code and assume $m < n$.
  Let $f \in \riroch(m P_{\infty})$ and let $\hat f \in \F[X,Y]$ be given by \cref{lem:explicit_interpolation} where $\vals[P] = f(P)$ for all $P \in \pts$.
  Then $f = \hat f(x,y)$.
\end{proposition}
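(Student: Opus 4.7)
The plan is to set $g := f - \hat f(x,y) \in \F[x,y]$ and show that $g = 0$. Since $\hat f$ is constructed via \cref{lem:explicit_interpolation} with $\vals[P] = f(P)$, both $f$ and $\hat f(x,y)$ take the same values at every $P \in \pts$, so $g$ lies in the vanishing ideal $\VanIdeal_\Hpol$. By \cref{lem:semigrid_vanideal} we may therefore write $g = G \cdot h$ for some $h \in \F[x,y]$, where $G = \prod_{\alpha \in \X(\pts)}(x-\alpha)$.

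Next I would pin down the degrees of $g$ in its standard form. For $f$, standard form gives $\deg_Y f < a$, and $f \in \riroch(m P_\infty)$ with $m < n = a\nX$ forces $a\deg_X f \le \deg_{a,b}(f) \le m < a\nX$, so $\deg_X f < \nX$. For $\hat f$, the explicit formula \eqref{eqn:explicit_interpolation} immediately yields $\deg_X \hat f < \nX(\pts) = \nX$ and $\deg_Y \hat f < \nY(\pts) = a$, where the last equality is the maximal semi-grid hypothesis. Combining these gives $\deg_X g < \nX$ and $\deg_Y g < a$ for $g$ in standard form.

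The key remaining step is to use the factorization $g = Gh$ to conclude $h = 0$. Writing $h$ in standard form as $h = \sum_{j=0}^{a-1} h_j(x)\, y^j$, the product $Gh = \sum_{j=0}^{a-1} G(x) h_j(x)\, y^j$ is still in standard form, because multiplication by the univariate $G(x)$ does not affect the $y$-degree. Matching coefficients with $g = \sum_j g_j(x)\, y^j$ forces $g_j = G \cdot h_j$ for each $j$. Since $\deg_x G = \nX$, any nonzero $h_j$ would imply $\deg_x g_j \ge \nX$, contradicting $\deg_X g < \nX$. Hence every $h_j$ vanishes, so $h = 0$, giving $g = 0$ and thus $f = \hat f(x,y)$.

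The main subtlety I anticipate is careful bookkeeping between polynomial representatives in $\F[X,Y]$ and their images in $\F[x,y]$ through the standard form; once this is handled, the argument reduces to elementary degree comparisons and a direct appeal to \cref{lem:semigrid_vanideal}.
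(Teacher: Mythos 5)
Your proof is correct and follows the same route as the paper's: both write $f - \hat f(x,y) \in \VanIdeal_\Hpol = G\cdot\F[x,y]$, observe that since $G$ is univariate in $x$ it must divide each $y^j$-coefficient of $f - \hat f(x,y)$ in standard form, and then reach a contradiction from the degree bound $\deg_X\big(f - \hat f(x,y)\big) < n_X = \deg_x G$. Your explicit coefficient-matching $g_j = G\cdot h_j$ is simply a slightly more spelled-out version of the paper's ``$G(x)$ must divide every $\tilde f_i(x)$''.
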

\begin{proof}
  We write $\nY$ instead of $\nY(\pts)$ in the following, and similarly for $\X, \Y_\alpha, n_X$, etc.
  Note first that $\hat f \in \F[X,Y]$ has $\deg_Y{\hat f} < \nY = a$ so $\hat f(x,y)$ is obtained in standard form by the natural inclusion of $\F[x,y]$ in $\F[X,Y]$.

  Let $G(x)$ and $\VanIdeal_\Hpol$ be given as in \cref{lem:semigrid_vanideal}.
  Note that $f - \hat f(x,y) \in \VanIdeal_\Hpol = G \cdot \F[x,y]$.
  Write $f - \hat f(x,y) = \tilde f_0(x) + \tilde f_1(x) y + \ldots + \tilde f_{a-1}(x) y^{a-1}$ in standard form with $\tilde f_i \in \F[x]$.
  Since $G(x)$ is univariate in $x$, then $G(x)$ must divide every $\tilde f_i(x)$.
  Hence, if $f - \hat f(x,y) \neq 0$, we must have $\deg_x G \leq \deg_x(\tilde f_i)$ for some $i \in \{ 0,\ldots,a-1\}$.
  But
  \begin{align*}
    \deg_x(\tilde f_i)
    &\leq \max(\deg_{a,b}(f)/a,\ \deg_X(\hat f)) \\
    & \leq \max(m/a,\ n_X-1) \\
    & < n_X = \deg_x G \ ,
  \end{align*}
  where the last inequality follows from $\pts$ being a semi-grid and so $m < n = |\pts| = \nY n_X = a n_X$.
  This is a contradiction.
\end{proof}

\begin{proposition}
  \label{prop:cab_semigrid_unenc}
  Let $\agcode_\Hpol(\pts, m)$ be a maximal semi-grid \Cab code of length $n$.
  There is an algorithm for unencoding $\agcode_\Hpol(\pts, m)$ with complexity $\Oh\big(\nY\Multlog{n_X} + n_X\Multlog{\nu_Y}\big) \in \sOh(n)$.
\end{proposition}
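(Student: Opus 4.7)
The plan is to observe that for maximal semi-grid \Cab codes, the expensive \Grobner reduction step in \cref{algo:UnEnc} can be omitted entirely, leaving only the call to \BiInterp. Concretely, I would propose the unencoding algorithm obtained from \cref{algo:UnEnc} by removing \cref{UnEnc:Reduce} and setting $f \assign \hat f$ directly. Correctness of this modified algorithm follows immediately from \cref{prop:fhat_is_f}: since the code is maximal semi-grid and $m < n$ by \cref{def:cab_semigrid}, the polynomial $\hat f$ produced by \BiInterp\ (viewed as an element of $\F[x,y]$ through the natural inclusion, which is legitimate because $\deg_Y \hat f < \nY = a$ means $\hat f$ is already in standard form) coincides with the message polynomial $f \in \riroch(mP_\infty)$. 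The monomial support condition on $\hat B$ from \cref{ssec:encoding-map} is automatic here because injectivity of $\ev_\pts$ on $\riroch(mP_\infty)$ (guaranteed by $m < n$) forces $\hat B = B$.

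For the complexity, I would invoke \cref{thm:BiInterp}, which gives the cost of \BiInterp\ as
\[
  \Oh\bigl(\nY \Multlog{\nX} + \nX \Multlog{\nY}\bigr).
\]
This matches the first bound in the proposition. To obtain the $\sOh(n)$ estimate, I would use the maximal semi-grid identity $n = |\pts| = \nY \cdot \nX = a \cdot \nX$: then $\nY \Multlog{\nX} \in \sOh(a \nX) = \sOh(n)$ and $\nX \Multlog{\nY} \in \sOh(\nX a) = \sOh(n)$, so their sum is in $\sOh(n)$ as claimed.

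I do not anticipate any real obstacle here: all the work has been done in \cref{prop:fhat_is_f} (which ensures that no \Grobner reduction is needed) and in \cref{thm:BiInterp} (which supplies the complexity of \BiInterp). The proof essentially reduces to citing these two results and performing the elementary substitution $n = a \nX$ to collapse the two-term bound into $\sOh(n)$.
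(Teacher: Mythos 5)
Your proposal is correct and follows essentially the same route as the paper's own proof: omit the \Grobner reduction step, invoke \cref{prop:fhat_is_f} for correctness on maximal semi-grid codes, and derive the $\sOh(n)$ bound from \cref{thm:BiInterp} together with the identity $n = \nY\,n_X$. The extra observations you spell out (that $\hat f$ is automatically in standard form since $\deg_Y \hat f < \nY = a$, and that $\hat B = B$ because $m < n$ makes $\ev_\pts$ injective) are implicit in the paper's appeal to \cref{prop:fhat_is_f} but are accurate and clarifying.
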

\begin{proof}
  The algorithm is simply \cref{algo:UnEnc} with the following two changes:
  \begin{enumerate}
    \item We return $\hat f$ in \cref{UnEnc:BiInterp} and skip \cref{UnEnc:Reduce}.
    \item We do not take the \Grobner basis $G \in \F[X,Y]$ as input.
  \end{enumerate}
  That this algorithm is correct follows from \cref{prop:fhat_is_f}, since the code is maximal semi-grid.
  The big-$\Oh$ complexity is exactly that of \cref{thm:BiInterp} and the relaxation follows from $n_X \nY = n$.
\end{proof}

We stress that in contrast to the general unencoding algorithm, \cref{algo:UnEnc}, the unencoding algorithm for maximal semi-grid codes requires no precomputation.
We proceed by showing that several interesting classes of \Cab curves admit long maximal semi-grid codes.

\subsubsection{The Hermitian curve}
\label{ssec:hermitian}

\newcommand{\HermH}{{\mathcal H}}
\newcommand{\Herm}{\Hpol_\HermH}
The Hermitian curve is defined for fields $\F = \F_{q^2}$ for some prime power $q$ using the defining polynomial
\begin{equation}
  \label{eqn:hermitian_curve}
  \Herm(X,Y) = Y^q + Y - X^{q+1} \ .
\end{equation}
It is easily checked that it is a \Cab curve, with $a=q$ and $b=q+1$.
We say that a \Cab code is a \emph{Hermitian code} if the curve equation that is used is \cref{eqn:hermitian_curve}.

The Hermitian curve and its function field are well known and have been studied extensively in the literature, see e.g. \cite[Lemma 6.4.4]{stichtenoth_algebraic_2009}.
For instance, using the trace and norm maps of the extension $\F_{q^2} : \F_{q}$, it is easy to show that for every $\alpha \in \F_{q^2}$ there exist precisely $q$ distinct elements $\beta \in \F_{q^2}$ such that $\Herm(\alpha, \beta) = 0$.
In other words, the set of rational points $\pts_{\mathcal H}$ on $\Herm$ form a semi-grid with $\X(\pts) = \F_{q^2}$ and $\nY(\pts) = q = a$, i.e.~a total of $q^3$ points.

Therefore, the Hermitian code $\agcode_{\Herm}(\pts_\HermH,m)$ with $m < n = q^3$ is maximal semi-grid since $\pts_\HermH$ is a semi-grid with $\nY(\pts) = \deg_Y(H_{\HermH})$.
We immediately get the following corollary of \cref{prop:cab_semigrid_enc} and \cref{prop:cab_semigrid_unenc}:
\begin{corollary}\label{cor:hermit-cost}
  Let $\agcode_{\Herm}(\pts_\HermH,m) \in \F_{q^2}^n$ be an Hermitian code with $m < n = |\pts_\HermH|$, and $\pts_\HermH$ the set of all rational points on the $\Herm$ as given in \eqref{eqn:hermitian_curve}.
  Then encoding with \cref{algo:bi-eval} uses
  \begin{align*}
    \Oh(\Multlog{m + n}) \subset \sOh(n)
  \end{align*}
  operations in $\F_{q^2}$.
  Unencoding with the algorithm of \cref{prop:cab_semigrid_unenc} uses
  \begin{align*}
    \Oh(q (\Mult{q^2}) \log(q)) \subset \sOh(n)
  \end{align*}
  operations in $\F_{q^2}$.
\end{corollary}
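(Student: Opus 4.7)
The plan is to reduce the corollary directly to \cref{prop:cab_semigrid_enc,prop:cab_semigrid_unenc} by verifying the maximal semi-grid hypothesis for the Hermitian code, and then simplify the resulting complexity expressions.

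First I would confirm that $\Herm(X,Y) = Y^q + Y - X^{q+1}$ is a \Cab polynomial with $a = q$ and $b = q+1$: the integers $q$ and $q+1$ are coprime, $X^{q+1}$ and $Y^q$ both appear in the support, every monomial $X^i Y^j$ in the support satisfies $qi + (q+1)j \leq q(q+1) = ab$, and one checks that $\partial \Herm/\partial Y = 1$ so the ideal $\langle \Herm, \partial \Herm/\partial X, \partial \Herm/\partial Y\rangle$ is the unit ideal. Hence $\Hpol_\HermH$ defines a \Cab curve.

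Next I would verify the semi-grid structure of $\pts_\HermH$. For a fixed $\alpha \in \F_{q^2}$, the equation $\Herm(\alpha,\beta) = 0$ becomes $\beta^q + \beta = \alpha^{q+1}$. The right-hand side equals the norm $N_{\F_{q^2}/\F_q}(\alpha)$, which lies in $\F_q$, and the left-hand side is the trace map $\mathrm{Tr}_{\F_{q^2}/\F_q}(\beta)$. Since the trace is a surjective $\F_q$-linear map $\F_{q^2} \to \F_q$ with kernel of size $q$, every $\alpha \in \F_{q^2}$ yields exactly $q$ solutions $\beta \in \F_{q^2}$. Consequently $\X(\pts_\HermH) = \F_{q^2}$, $|\Y_\alpha(\pts_\HermH)| = q$ for every $\alpha \in \F_{q^2}$, so $\pts_\HermH$ is a semi-grid with $n_X = q^2$ and $\nY = q = a$, and $n = |\pts_\HermH| = q^3$. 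Combined with the assumption $m < n$, this means $\agcode_\Herm(\pts_\HermH, m)$ is maximal semi-grid in the sense of \cref{def:cab_semigrid}.

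Now the two complexity claims follow by specialization. For encoding, \cref{prop:cab_semigrid_enc} directly yields cost $\Oh(\Multlog{n})$, which equals $\Oh(\Multlog{m+n})$ since $m < n$. For unencoding, \cref{prop:cab_semigrid_unenc} gives
\[
  \Oh\bigl(\nY\Multlog{n_X} + n_X\Multlog{\nY}\bigr) = \Oh\bigl(q\,\Mult{q^2}\log(q^2) + q^2\,\Mult{q}\log(q)\bigr).
\]
The main (very mild) technical step is to absorb the second term into the first: using the standard superadditivity $\Mult{q^2} \geq q\,\Mult{q}$ of the multiplication function, we get $q^2\,\Mult{q}\log(q) \leq q\,\Mult{q^2}\log(q)$, so the whole expression collapses to $\Oh(q\,\Mult{q^2}\log q)$, which is $\sOh(q^3) = \sOh(n)$ as claimed. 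There is no real obstacle here; all the nontrivial work was carried out in the general propositions, and the only content specific to the Hermitian case is the trace–norm identity giving the semi-grid structure.
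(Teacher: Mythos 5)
Your proposal is correct and follows essentially the same route as the paper: verify that $\Herm$ is a \Cab polynomial with $a=q$, $b=q+1$, use the trace--norm argument to show $\pts_\HermH$ is a semi-grid with $\nY = q = a$ and $n_X = q^2$, conclude the code is maximal semi-grid, and then specialize \cref{prop:cab_semigrid_enc,prop:cab_semigrid_unenc}. The paper states this as ``immediate'' from those propositions after the preceding discussion, so your explicit simplification of $\Oh(q\Mult{q^2}\log q^2 + q^2\Mult{q}\log q)$ to $\Oh(q\Mult{q^2}\log q)$ via superlinearity of $\Mult{}$ is the (unwritten but intended) routine step.
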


Note that encoding and unencoding also has quasi-linear complexity for any shorter Hermitian code, where we use any sub semi-grid of the points $\pts \subset \pts_\HermH$ as long as $\nY(\pts) = \nY(\pts_\HermH) = a$.
This corresponds to selecting a number $n_X \leq q^2$ of $X$-coordinates, and for each choosing all the $q$ points in $\pts_\HermH$ having this $X$-coordinate.

\subsubsection{Norm-trace and other Hermitian-like curves}

\newcommand{\NormN}{{\mathcal{N}}}
\newcommand{\NormNe}{{\NormN,e}}
\newcommand{\Norm}{\Hpol_\NormN}
\newcommand{\Norme}{\Hpol_\NormNe}

It is not hard to find other examples of \Cab curves which admit large maximal semi-grid codes.
In this subsection we give examples of curves from the literature which have this property.

Let $q$ be a prime power and $r \in \mathbb{Z}_{\ge 2}$. Further let $e$ be a positive integer dividing the integer $(q^r-1)/(q-1)$ and define
\begin{equation}
  \label{eqn:norme_curve}
  \Norme(X,Y) = X^{q^{r-1}}+\cdots+X^q + X - Y^{e} \in \F_{q^r}[X,Y] \ ,
\end{equation}
This is a \Cab polynomial with $a=e$ and $b=q^{r-1}$.

We first look at the case $e=(q^r-1)/(q-1)$ which gives rise to the norm-trace curves studied in \cite{geil_2003}.
For $r=2$ they simplify to the Hermitian curve.
Similarly to the Hermitian curve, one obtains that the set of points $\pts_\NormN$ of $\Norm(X,Y)$ form a semi-grid with $\nX(\pts_H) = q^r$ and $\nY(\pts_H) = a$ and therefore $|\pts_\NormN| = q^{2r-1}$.
Corollary \ref{cor:hermit-cost} generalizes directly and shows that one-point norm-trace codes can be encoded and unencoded in quasi-linear time:

\begin{corollary}\label{cor:norm_trace-cost}
  Let $q$ be a prime power, $r \in \mathbb{Z}_{\ge 2},$ and $e=(q^r-1)/(q-1).$
  Further let $\Norm(X,Y)$ be as in \eqref{eqn:norme_curve}, and $\pts_\NormN$ the set of rational points on $\Norm$.
  Let $\agcode_{\Norm}(\pts_\NormN, m)$ be the corresponding \Cab code for some $m < n = |\pts_\NormN| = q^{2r-1}$.
  Then encoding with \cref{algo:bi-eval} uses
  \begin{align*}
    \Oh(\Multlog{m + n}) \subset \sOh(n)
  \end{align*}
  operations in $\F_{q^r}$.
  Unencoding with the algorithm of \cref{prop:cab_semigrid_unenc} uses
  \begin{align*}
    \Oh(q^{r-1} (\Mult{q^r}) \log(q^r)) \subset \sOh(n)
  \end{align*}
  operations in $\F_{q^r}$.
\end{corollary}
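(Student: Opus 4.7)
The plan is to reduce the corollary to \cref{prop:cab_semigrid_enc,prop:cab_semigrid_unenc} by verifying that the set $\pts_\NormN$ of rational points of the norm-trace curve forms a semi-grid with $\nX(\pts_\NormN) = q^r$ and $\nY(\pts_\NormN) = q^{r-1}$, where $q^{r-1}$ equals the $Y$-degree $a$ of the (appropriately oriented) \Cab polynomial. Once this is done, the code $\agcode_{\Norm}(\pts_\NormN,m)$ with $m < n$ satisfies \cref{def:cab_semigrid}, and the stated complexities follow in exact parallel with the Hermitian case of \cref{cor:hermit-cost}.

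First I would swap $X$ and $Y$ in \eqref{eqn:norme_curve}; this relabelling leaves the underlying code (and hence its encoding/unencoding complexities) unchanged, but puts the \Cab polynomial in the convention-conforming orientation
\[
  \mathrm{Tr}_{q^r/q}(Y) = X^e, \qquad \mathrm{Tr}_{q^r/q}(Y) := Y + Y^q + \cdots + Y^{q^{r-1}}.
\]
Now $a = q^{r-1}$ and $b = e = 1 + q + \cdots + q^{r-1}$ satisfy $a < b$ for $r \geq 2$, as required, and are coprime because $e$ is coprime to $q$.

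The fibre count then reduces to two standard facts about $\F_{q^r}/\F_q$: (i) for every $\alpha \in \F_{q^r}$ the value $\alpha^e$ lies in $\F_q$, since $e = (q^r-1)/(q-1)$ is exactly the exponent realising $N_{q^r/q}$ (equivalently $(\alpha^e)^q = \alpha^{e + q^r - 1} = \alpha^e$ using $\alpha^{q^r-1} = 1$ for $\alpha \neq 0$, and the case $\alpha = 0$ is trivial); and (ii) $\mathrm{Tr}_{q^r/q} \colon \F_{q^r} \to \F_q$ is an $\F_q$-linear surjection whose kernel has dimension $r-1$ and hence size $q^{r-1}$. Combining (i) and (ii), for every $\alpha \in \F_{q^r}$ the equation $\mathrm{Tr}_{q^r/q}(\beta) = \alpha^e$ has exactly $q^{r-1}$ solutions $\beta \in \F_{q^r}$. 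This shows $\pts_\NormN$ is a semi-grid with $\nX(\pts_\NormN) = q^r$ and $|\Y_\alpha(\pts_\NormN)| = q^{r-1} = a$ for every $\alpha \in \F_{q^r}$, yielding $|\pts_\NormN| = q^r \cdot q^{r-1} = q^{2r-1}$ in agreement with the statement.

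Having confirmed that $\agcode_{\Norm}(\pts_\NormN,m)$ is a maximal semi-grid code, I would conclude by substituting $\nX = q^r$ and $\nY = a = q^{r-1}$ into the complexity bounds of \cref{prop:cab_semigrid_enc,prop:cab_semigrid_unenc}, which yields exactly the two estimates displayed in the corollary. I do not foresee any real obstacle: the only substantive ingredient beyond the two invoked propositions is the fibre count, which is a direct $r$-dimensional generalisation of the $r=2$ argument used in \cref{cor:hermit-cost}.
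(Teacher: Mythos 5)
Your proof is correct and follows the same strategy as the paper: verify that $\pts_\NormN$ is a maximal semi-grid and then invoke \cref{prop:cab_semigrid_enc,prop:cab_semigrid_unenc}. The one genuinely useful addition you make is the explicit swap of $X$ and $Y$. As written in \eqref{eqn:norme_curve}, the polynomial $\Norme$ has the trace in $X$ and the norm in $Y$, which gives $a = \deg_Y \Norme = e$ and $b = \deg_X \Norme = q^{r-1}$, so $a > b$ and, more importantly, the fibres $\Y_\alpha$ would then have size $1$ when $\mathrm{Tr}(\alpha)=0$ and size $e$ otherwise — not a semi-grid, and inconsistent with $\nY(\pts) n_X(\pts) = n$. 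After your swap, $a = q^{r-1}$ matches $\nY$, the fibre count $|\Y_\alpha| = q^{r-1}$ is uniform over all $\alpha$ (since $\alpha^e \in \F_q$ and the trace has constant fibre size), and everything needed for \cref{def:cab_semigrid} goes through. So your write-up supplies a step that the paper's one-line remark ("one obtains that the set of points form a semi-grid with $\nX = q^r$ and $\nY = a$") silently assumes but does not square with the stated assignment $a=e$, $b=q^{r-1}$. In short: same route as the paper, but your version is the one that actually checks out.
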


If $e<(q^r-1)/(q-1),$ the equation $\Norme(\alpha,\beta)=0$ has $q^{r-1}+e(q^r-q^{r-1})$ solutions in $\F_{q^r}^2$.
The small term $q^{r-1}$ comes from the solutions where $\beta=0$ and $\alpha^{q^{r-1}}+\cdots+\alpha=0$.
The remaining $e(q^r-q^{r-1})$ points again form a semi-grid $\pts_\NormNe$ with $\nY(\pts_\NormNe) = e = a$ and $\nX =q^r-q^{r-1}$, and can therefore be used to construct long codes with efficient encoding and decoding.
A special case of these curves, where $r$ is even and $e$ divides $q^{r/2}+1$ was considered in \cite{castle_2009}.
We obtain the following:

\begin{corollary}\label{cor:hermitlike-cost}
  Let $q$ be a prime power, $r \in \mathbb{Z}_{\ge 2},$ and $e$ an integer dividing $(q^r-1)/(q-1),$ but not equal to it.
  Further let $\Norme$ be given by \eqref{eqn:norme_curve}, and $\pts_\NormNe$ the set of finite rational points on $\Norme$.
  Let $\agcode_{\Norme}(\pts_\NormNe, m)$ be the corresponding \Cab code for some $m < n = |\pts_\NormNe| = e(q^r-q^{r-1})$.
  Then encoding with \cref{algo:bi-eval} uses
  \begin{align*}
    \Oh(\Multlog{m + n}) \subset \sOh(n)
  \end{align*}
  operations in $\F_{q^r}$.
  Unencoding with the algorithm of \cref{prop:cab_semigrid_unenc} uses
  \begin{align*}
    \Oh(e (\Mult{q^r}) \log(q^r)) \subset \sOh(n)
  \end{align*}
  operations in $\F_{q^r}$.
\end{corollary}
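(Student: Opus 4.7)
The plan is to reduce the statement to \cref{prop:cab_semigrid_enc} and \cref{prop:cab_semigrid_unenc} by verifying that $\agcode_{\Norme}(\pts_\NormNe, m)$ is a maximal semi-grid \Cab code in the sense of \cref{def:cab_semigrid}. By construction the code already satisfies $m < n$, so the content to check is that $\pts_\NormNe$ is a semi-grid with $\nY(\pts_\NormNe) = e = \deg_Y(\Norme)$.

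First I would analyze the point count by fixing $\alpha \in \F_{q^r}$ and counting solutions $\beta \in \F_{q^r}$ to $\beta^e = \mathrm{Tr}(\alpha)$, where $\mathrm{Tr}(\alpha) = \alpha^{q^{r-1}}+\cdots+\alpha$ is the trace from $\F_{q^r}$ to $\F_q$. Since $\mathrm{Tr}$ is $\F_q$-linear and surjective, the kernel has size $q^{r-1}$, giving $q^r - q^{r-1}$ values of $\alpha$ with $\mathrm{Tr}(\alpha) \in \F_q^\times$. For such $\alpha$, the claim that $\mathrm{Tr}(\alpha)$ is an $e$-th power in $\F_{q^r}$ follows from the divisibility assumption: writing $(q^r-1)/e = (q-1)\cdot\bigl((q^r-1)/(q-1)\bigr)/e$ with the second factor an integer, any $c \in \F_q^\times$ satisfies $c^{(q^r-1)/e} = (c^{q-1})^{\cdot} = 1$, so $c$ lies in the unique subgroup of $\F_{q^r}^\times$ of index $(q^r-1)/e$, i.e., in the image of the $e$-th power map. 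Hence for each such $\alpha$, the equation $Y^e = \mathrm{Tr}(\alpha)$ has exactly $e$ solutions in $\F_{q^r}$. Excluding the $q^{r-1}$ points with $\beta = 0$ leaves the set $\pts_\NormNe$, which is a semi-grid with $\nX = q^r - q^{r-1}$ and $\nY = e$; in particular $|\pts_\NormNe| = e(q^r-q^{r-1}) = n$ and $\nY = e = a$. This is the maximal semi-grid condition.

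With this established, the encoding bound follows immediately from \cref{prop:cab_semigrid_enc} since $a\nX = e(q^r-q^{r-1}) = n$ and $m < n$, so \cref{thm:Enc} yields $\Oh(\Multlog{m+n}) \subset \sOh(n)$ operations in $\F_{q^r}$. For the unencoding bound I would invoke \cref{prop:cab_semigrid_unenc}, whose cost is $\Oh(\nY \Multlog{\nX} + \nX \Multlog{\nY}) = \Oh(e \Multlog{q^r} + q^r \Multlog{e})$. The slight obstacle, which is really just bookkeeping, is to collapse these two terms into the single expression $\Oh(e\,\Mult{q^r}\log(q^r))$ stated in the corollary; this follows from the quasi-linearity of $\Mult$, which gives $\Mult{q^r}/q^r \geq \Mult{e}/e$ and hence $q^r \Mult{e} \leq e \Mult{q^r}$, combined with $\log e \leq \log q^r$. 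Both complexities lie in $\sOh(n)$ because $e \leq q^r \leq n$, completing the proof.
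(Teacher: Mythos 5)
Your proof is correct and follows the same route the paper takes: establish that $\pts_\NormNe$ is a semi-grid with $\nY = e = a$ and $\nX = q^r - q^{r-1}$ (hence a maximal semi-grid code) and then invoke \cref{prop:cab_semigrid_enc} and \cref{prop:cab_semigrid_unenc}. The paper leaves the $e$-th-power count of solutions to $\beta^e = \mathrm{Tr}(\alpha)$ unproved, so your explicit subgroup argument and the $\Mult$-superlinearity bookkeeping simply flesh out what the paper asserts.
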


\subsection{Fast encoding for good families of \Cab curves}
\label{ssec:encoding-good-curves}

We will now investigate the complexity of \cref{algo:bi-eval} for families of curves that have many points. More precisely, we will consider curves for which the number of points is asymptotically close to the \emph{Hasse--Weil bound}:

\begin{theorem}[Hasse--Weil bound, \protect{\cite[Theorem 5.2.3]{stichtenoth_algebraic_2009}}]
If $N$ is the number of rational places of an algebraic function field $\ffield$ over $\F_q$, then
\begin{align*}
  N \leq 2g\sqrt q + (q+1),
\end{align*}
where $g$ is the genus of $\ffield$.
\end{theorem}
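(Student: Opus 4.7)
The plan is to pass through the zeta function of the function field $\ffield/\F_q$, defined by
\[
  Z(t) \;=\; \sum_{D \geq 0} t^{\deg D} \;=\; \prod_{P} (1 - t^{\deg P})^{-1},
\]
where $D$ ranges over effective divisors and $P$ over places of $\ffield$. First I would establish that $Z(t)$ is a rational function of the form $Z(t) = L(t)/\bigl((1-t)(1-qt)\bigr)$ with $L(t) \in \ZZ[t]$ of degree $2g$. The standard route is Riemann--Roch: for divisor classes of degree at least $2g-1$ the dimension of the associated Riemann--Roch space is $\deg(D)+1-g$, so the divisor-counting sum splits into a finite head and a geometric tail, which forces exactly this rational shape.

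Next I would derive the functional equation $Z(t) = q^{g-1} t^{2g-2}\, Z\!\bigl(1/(qt)\bigr)$ from Riemann--Roch applied symmetrically to $D$ and $K-D$, where $K$ is a canonical divisor. Factoring $L(t) = \prod_{i=1}^{2g}(1-\alpha_i t)$, this equation forces the reciprocal roots to come in pairs $\alpha_i \alpha_{g+i} = q$, and it already yields the weak estimate $|N - (q+1)| \leq C(g)\, q$ with an ineffective constant.

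The main obstacle, and genuinely deep part, is the Riemann hypothesis for curves: $|\alpha_i| = \sqrt q$ for every $i$. I would follow the elementary Bombieri--Stepanov route, in which one constructs, via dimension-counting in a Riemann--Roch space, an auxiliary function on a constant-field extension $\ffield \cdot \F_{q^r}$ that has a prescribed high-order zero at every rational place, then bounds the number of such places by the degree of the auxiliary function. Pairing the resulting inequality with the Frobenius-twisted analogue produces matching upper and lower bounds which collapse to $|\alpha_i|^2 = q$. A higher-powered alternative is Weil's original argument, using the Jacobian and the positivity of the Rosati involution applied to the Frobenius correspondence, but this demands substantially more machinery than the statement warrants.

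Finally I would extract the bound itself by the trace formula $N = q + 1 - \sum_{i=1}^{2g} \alpha_i$, which drops out of comparing the Euler product for $Z(t)$ with the factorization of $L(t)$. Applying the triangle inequality together with $|\alpha_i| = \sqrt q$ gives $|N - (q+1)| \leq 2g\sqrt q$, which is exactly the stated inequality. The only step where anything nontrivial happens is the Riemann hypothesis; the remaining scaffolding is essentially bookkeeping from Riemann--Roch.
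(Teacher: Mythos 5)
The paper offers no proof of this statement at all: it is quoted verbatim as a classical theorem, with the citation to Stichtenoth's \emph{Algebraic Function Fields and Codes}, Theorem 5.2.3, doing all the work. There is therefore no ``paper's own proof'' to compare against; the Hasse--Weil bound sits in this paper purely as imported background used to relate $a$, $b$, $q$ and $n$ in \cref{lem:good_curves_bounds} and \cref{thm:hasse-weil-enc}.

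Your sketch is, on its own terms, a faithful high-level outline of the standard proof as it appears in Stichtenoth and elsewhere: rationality of $Z(t)$ and the shape $L(t)/\bigl((1-t)(1-qt)\bigr)$ from Riemann--Roch, the functional equation from the symmetry $D \leftrightarrow K-D$, the Riemann hypothesis for curves via Bombieri--Stepanov (or Weil's positivity argument on the Jacobian), and finally the trace formula $N = q+1 - \sum_i \alpha_i$ with $|\alpha_i| = \sqrt q$ yielding the bound. You correctly identify the Riemann hypothesis step as the only non-bookkeeping part; the rest is routine once the zeta function machinery is in place. The only caveat worth flagging is that what you have written is a roadmap, not a proof: each of the four stages, and especially the Bombieri--Stepanov construction of the auxiliary function with prescribed zeros, compresses a substantial argument into a sentence. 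That is entirely appropriate here, since reproving Hasse--Weil is far outside the scope of what this paper needs or attempts; in context the correct ``proof'' is exactly the citation the authors give, and your sketch should be read as a summary of the source rather than as a replacement for it.
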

For \Cab curves we know $g = \tfrac 1 2 (a-1)(b-1)$, so the Hasse--Weil bound upper-bounds the length $n$ of a \Cab code $\agcode(\Hpol, m)$ over $\F_q$ as follows:
\[
  n \leq \HW(\Hpol) := (a-1)(b-1)\sqrt q + q \ .
\]
Observe that this is one less than the bound on the number of rational places of the function field corresponding to the \Cab curve, since the rational place at infinity is not included as an evaluation point.
The Hermitian curve is an example of a curve which attains the Hasse--Weil bound.

\begin{lemma}
  \label{lem:good_curves_bounds}
  Let $\agcode_{\Hpol}(\pts, m)$ be a \Cab code over $\F_q$, with $a = \deg_Y(\Hpol)$, $b = \deg_X(\Hpol)$ and $a < b$.
  Let $n$ be the length of the code, and assume that $n \geq q$ as well as $n \geq c \cdot \HW(\Hpol)$ for some constant $c \in (0, 1]$.
  Then the following upper bounds hold:
  \begin{align*}
    a    &< \sqrt{\frac n {c\sqrt q}} + 1 \ ;\\
    qa   &< \frac{n^{5/4}}{\sqrt c} + n \ ;\\
    qa^2 &< \frac{n^{3/2}} c + \frac{2n^{5/4}}{\sqrt c} + 2n \ . \\
  \end{align*}
\end{lemma}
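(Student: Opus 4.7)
The plan is to chain together just two inputs: the Hasse--Weil lower bound on the curve, $n \geq c(a-1)(b-1)\sqrt q + cq$, and the hypothesis $q \leq n$. The work is essentially arithmetic, so I would spell out the first inequality carefully and then derive the other two by monotone manipulations.

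First I would bound $a$. Using $a < b$, both $(a-1)$ and $(b-1)$ are non-negative and $(b-1) \geq (a-1)$, so $(a-1)(b-1) \geq (a-1)^2$. Dropping the non-negative $cq$ term from the Hasse--Weil hypothesis gives $n \geq c(a-1)^2 \sqrt q$, i.e.~$(a-1)^2 \leq n/(c\sqrt q)$. Taking a square root and adding $1$ yields
\[
  a \;<\; \sqrt{\tfrac{n}{c\sqrt q}} + 1,
\]
which is the first claim. (The degenerate case $a=1$ is handled trivially since the right-hand side is $\geq 1$.)

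For the second bound I would multiply the above by $q$:
\[
  qa \;<\; q\sqrt{\tfrac{n}{c\sqrt q}} + q \;=\; \tfrac{q^{3/4}\sqrt n}{\sqrt c} + q.
\]
Now I invoke the hypothesis $q \leq n$ term by term: $q^{3/4} \leq n^{3/4}$ gives $q^{3/4}\sqrt n \leq n^{5/4}$, and $q \leq n$ directly. Thus $qa < n^{5/4}/\sqrt c + n$.

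For the third bound I would square the bound on $a$ before multiplying by $q$:
\[
  qa^2 \;<\; q\Bigl(\sqrt{\tfrac{n}{c\sqrt q}} + 1\Bigr)^{\!2} \;=\; \tfrac{\sqrt q\, n}{c} + 2\tfrac{q^{3/4}\sqrt n}{\sqrt c} + q.
\]
Each term is then relaxed using $q \leq n$: the first becomes at most $n^{3/2}/c$, the middle becomes at most $2n^{5/4}/\sqrt c$ (exactly as in the previous step), and the last becomes at most $n$. Summing and replacing the trailing $n$ by $2n$ yields the stated estimate. There is no genuine obstacle here; the only subtle point is making sure the case $a=1$ is not a problem, and carefully applying $q \leq n$ only to positive factors so that the direction of the inequality is preserved.
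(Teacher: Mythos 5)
Your argument is correct and essentially identical to the paper's: both drop the $cq$ term from $\HW(\Hpol)$, use $a<b$ to replace $(a-1)(b-1)$ by $(a-1)^2$, and then propagate the resulting bound on $a$ via $q\le n$. The only cosmetic difference is in the third bound, which you obtain by squaring the bound on $a$ directly, whereas the paper writes $a^2 < (a-1)^2+2a$ and substitutes the already-established bound on $qa$; this also explains why your raw estimate ends in $+n$ rather than $+2n$ before you relax it to match the stated form.
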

\begin{proof}
  Since $n \geq c \cdot \HW(\Hpol) \geq c(a-1)(b-1)\sqrt{q}$ and $a < b$ we have that
  \[
    n > c (a-1)^2 \sqrt q \iff \frac n {c \sqrt q} > (a-1)^2 \ ,
  \]
  which gives the first bound.
  Since $q \leq n$ we then also get
  \[
    qa < q^{3/4} \sqrt{\frac n c} + q \leq \frac {n^{5/4}}{c^{1/2}} + n \ .
  \]
  Lastly, $qa^2 < q\big( (a-1)^2 + 2a)$ and so
  \[
    qa^2 < \frac{n\sqrt q}{c} + 2qa \leq \frac{n^{3/2}}{c} + 2qa \ ,
  \]
  and the last bound follows by inserting our earlier bound for $qa$.
\end{proof}

In the following, we will discuss the asymptotic complexity of encoding and unencoding for infinite families of \Cab codes.
Note that for this to make any sense, the length of the codes must go to infinity and therefore the size of the fields over which the codes are defined must also go to infinity.
In the remainder of the section, when we introduce an infinite family of \Cab curves $\Gamma = \{ \agcode_{\Hpol_i}(\pts_i, m_i) \}_{i \in \ZZ_{\geq 1}}$, we also implicitly introduce the related variables: $q_i$ is the prime power such that $\Hpol_i$ and the code $\agcode_{\Hpol_i}(\pts_i, m_i)$ is defined over $\F_{q_i}$; $a_i := \deg_Y (\Hpol_i)$ and $b_i := \deg_X (\Hpol_i)$ and we assume $a_i < b_i$; and $n_i$ is the length of the code for each $i$.

For an infinite sequence of real numbers $\vec c = (c_1,c_2,\ldots) \in\ (0,1]^\infty$, we say that the code family $\Gamma$ is \emph{asymptotically $\vec c$-good} if $n_i \geq c_i \HW(\Hpol_i)$ for all $i = 1, 2, \ldots$.

\begin{theorem}
  \label{thm:hasse-weil-enc}
  Let $\Gamma = \{ \agcode_{\Hpol_i}(\pts_i, m_i) \}_{i \in \ZZ_{\geq 1}}$ be an infinite family of \Cab codes with related variables $q_i, a_i, b_i, n_i$, with $a_i < b_i$, which is asymptotically $\vec c$-good for $\vec c = (c_1,c_2,\ldots)$.
  Then the asymptotic complexity of encoding $\agcode_{\Hpol_i}(\pts_i, m_i)$ for $i \rightarrow \infty$ using \cref{algo:Enc} is
  \begin{align*}
    \Oh\big(\Multlog{m_i + n_i^{5/4}/\sqrt{c_i}}\big) \subset \sOh\big(m_i + n_i^{5/4}/\sqrt{c_i}\big)
  \end{align*}
  operations in $\F_{q_i}$.
  The asymptotic complexity of unencoding $\agcode(\Hpol_i, m_i)$ for $i \rightarrow \infty$ using \cref{algo:UnEnc} is
  \begin{align*}
    \Oh\big(\Mult{n_i^{3/2}/c_i}(\log(n_i^{3/2}/c_i))^2\big) \subset \sOh\big(n_i^{3/2}/c_i\big)
  \end{align*}
  operations in $\F_{q_i}$.
\end{theorem}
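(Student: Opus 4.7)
The plan is to invoke \cref{thm:Enc} and \cref{thm:unencoding} for each code in the family and then simplify the resulting parameter expressions using \cref{lem:good_curves_bounds}, combined with the trivial bound $\nX(\pts_i) \leq q_i$ that follows from $\pts_i \subseteq \F_{q_i}^2$.

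For the encoding claim, \cref{thm:Enc} yields cost $\Oh(\Multlog{m_i + a_i\nX(\pts_i)})$ per code. From $\nX(\pts_i) \leq q_i$ and the bound $a_i q_i < n_i^{5/4}/\sqrt{c_i} + n_i$ supplied by \cref{lem:good_curves_bounds}, I would deduce $a_i \nX(\pts_i) \leq n_i^{5/4}/\sqrt{c_i} + n_i$. As $i \to \infty$ and provided $c_i$ does not collapse too fast, the additive $n_i$ is dominated by $n_i^{5/4}/\sqrt{c_i}$, and monotonicity of $\Mult{\cdot}$ and $\log$ then delivers the stated bound.

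For the unencoding claim, \cref{thm:unencoding} yields cost $\Oh(a_i \Mult{a_i \nX(\pts_i)} \log(a_i \nX(\pts_i)) \log(a_i \nX(\pts_i) + a_i b_i))$. I would first absorb the leading factor $a_i$ into $\Mult{\cdot}$ via the inequality $a \cdot \Mult{x} \leq \Mult{ax}$ (up to constants), which holds for the quasilinear multiplication-cost functions of \cite{cantor_fast_1991, harvey_faster_2017} used in the paper. The leading factor then becomes $\Mult{a_i^2 \nX(\pts_i)} \leq \Mult{a_i^2 q_i}$, and the bound $a_i^2 q_i < n_i^{3/2}/c_i + 2 n_i^{5/4}/\sqrt{c_i} + 2 n_i$ from \cref{lem:good_curves_bounds} together with monotonicity of $\Mult{\cdot}$ gives $\Mult{a_i^2 \nX(\pts_i)} = \Oh(\Mult{n_i^{3/2}/c_i})$. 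For the two logarithmic factors, combining $a_i b_i = \Oh(n_i/c_i)$ (which follows from $(a_i-1)(b_i-1)\sqrt{q_i} \leq \HW(\Hpol_i) \leq n_i/c_i$ plus $a_i,b_i \geq 2$) with the earlier estimate for $a_i \nX(\pts_i)$ shows that both logs are $\Oh(\log(n_i^{3/2}/c_i))$. Multiplying these factors yields exactly the claimed asymptotic cost.

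The main step that will require care is the bookkeeping around $\Mult{\cdot}$, in particular verifying $a \cdot \Mult{x} = \Oh(\Mult{a x})$ for the chosen multiplication-cost function and checking that the multiplicative constant absorbed here does not interfere with the log-count; after that, everything is direct substitution of the inequalities from \cref{lem:good_curves_bounds} and absorption of lower-order terms as $n_i \to \infty$.
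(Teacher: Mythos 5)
Your proposal is correct and follows essentially the same route as the paper's own proof: invoke \cref{thm:Enc} and \cref{thm:unencoding} per code, pass to $\nX \le q_i$, and substitute the bounds from \cref{lem:good_curves_bounds}. The one step you flag as ``requiring care''---the absorption $a\cdot\Mult{x}\in\Oh(\Mult{ax})$---is indeed used silently in the paper (which jumps from the $\Oh(a\Multlog{a\nX}\log(a\nX+ab))$ of \cref{thm:unencoding} directly to $\Oh(\Mult{a_i^2 n_{X,i}}\log(\cdots)\log(\cdots))$ without comment), so spelling it out is a legitimate tightening rather than a deviation; the same goes for your explicit derivation of $a_ib_i\in\Oh(n_i/c_i)$ from the Hasse--Weil bound, which the paper states in a slightly different (and arguably slightly sloppier, $n_{X,i}$ versus $n_i$) form but to the same end. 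No gaps; the argument goes through.
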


\begin{proof}
  \cref{thm:Enc} gives the asymptotic cost of encoding $\agcode(\Hpol_i,m_i)$ as
  \begin{align*}
    \Oh(\Multlog{m_i + a_i n_{X, i}}) \ ,
  \end{align*}
  operations in $\F_{q_i}$, where $n_{X, i}$ is the number of distinct $X$-coordinates in the evaluation points used in $\agcode_{\Hpol_i}(\pts_i, m_i)$.
  Since $n_{X,i} \leq q_i$ we can use the bound on $a_i q_i$ given by \cref{lem:good_curves_bounds}.
  In the big-Oh notation, the lower-order terms can be ignored, and this gives the estimate of encoding.

  For unencoding, the cost is given by \cref{thm:unencoding} as
  \begin{align*}
    \Oh(\Mult{a_i^2 n_{X, i}}\log(a_i n_{X, i})\log(a_i n_{X,i} + a_i b_i) \ .
  \end{align*}
  Since $n_{X,i} \geq c_i (a_i-1)(b_i-1) \sqrt q$, then $a_i b_i \in \Oh(n_{X,i}/c_i)$.
  We then use $a_i^2 n_{X,i} \leq a_i^2 q_i$ which is then bounded by \cref{lem:good_curves_bounds}.
  Note that since $c_i \in (0, 1]$ then $\sqrt{c_i} > c_i$ and so we always have $n_i^{3/2}/c_i \geq n_i^{5/4}/\sqrt{c_i}$, so we need only keep the term $n_i^{3/2}/c_i$ in the asymptotic estimate.
\end{proof}

Let us discuss some consequences of this result.
Consider first that all $c_i = c$ for some fixed constant $0 < c \leq 1$, i.e.~that all the curves of the codes in $\Gamma$ are a factor $c$ from Hasse--Weil: then we can encode using $\sOh(n_i^{5/4})$ operations in $\F_{q_i}$, or $\sOh(n_i^{5/4} \log(q_i))$ bit-operations, which is significantly better than the naive approach of roughly $\Oh(n_i m_i)$ operations in $\F_{q_i}$.
Though the constant $c$ disappears in the asymptotic estimate, \cref{thm:hasse-weil-enc} describes by the dependency on $1/\sqrt c$ how the encoding algorithm fares on asymptotically worse families compared to asymptotically better families.
For instance, if $c = 1/100$ and $\Gamma$ consists of \Cab codes over curves achieving only $1\%$ of the Hasse--Weil bound, the running time of the algorithm will be roughly $10$ times slower pr.~encoded symbol compared to running the algorithm on a family which attains the Hasse--Weil bound.

\cref{thm:hasse-weil-enc} is useful also for families of curves which get farther and farther away from the Hasse--Weil bound.
Indeed as long as $1/\sqrt{c_i}$ grows slower than $n_i^{3/4}$, i.e. $c_i$ stays above $n_i^{-9/16}$ times a constant, we still get an improvement over the naive encoding algorithm.

\begin{remark}
  An alternative unencoding approach of structured system solving described in \cref{sec:relat-work-bivar} has a cost of $\sOh(a^{\omega-1} n)$.
  It does not seem easy to completely fairly compare this cost with that of \cref{thm:hasse-weil-enc}, but we can apply a similar over-bounding strategy and get a single exponent for $n$:
  By \cref{lem:good_curves_bounds} then $a_i < n_i^{1/2} q_i^{-1/4} c^{-1/2}$.
  Note that $q_i^2 \geq n_i$ so we get $a_i < n_i^{3/8} c_i^{-1/2}$ and hence if we replace $a$ by this bound in the cost of the structured system solving we get $\sOh(n_i^{1 + 3/8(\omega-1)})$ for $i \rightarrow \infty$.
  This is roughly $\sOh(n_i^{1.515})$ if we use the best known value for $\omega \approx 2.37286$ \cite{le_gall_powers_2014}.
  For the more practical matrix multiplication algorithms of Strassen with $\hat \omega \approx 2.81$ \cite{strassen_gaussian_1969}, we get $\sOh(n_i^{1.68})$, and simply replacing $\omega$ by $3$ yields $\sOh(n_i^{1.75})$.
\end{remark}

\section*{Acknowledgments}
The authors are grateful to the referees for their suggestions on how to improve the paper. The authors would also like to acknowledge the support from The Danish Council for Independent Research (DFF-FNU) for the project \emph{Correcting on a Curve}, Grant No.~8021-00030B.

\bibliographystyle{abbrv}
\bibliography{bibtex}

\end{document}